\date{}
\newtheorem{Theorem}{Theorem}[section]
\newtheorem{Proposition}[Theorem]{Proposition}
\newtheorem{Lemma}[Theorem]{Lemma}
\theoremstyle{definition}
\newtheorem{Definition}[Theorem]{Definition}
\theoremstyle{remark}
\newtheorem{Remark}[Theorem]{Remark}
\numberwithin{equation}{section}
\def\gg#1{\Gamma(#1)}
\def\C{\mathbb{C}}
\def\nr{\nonumber \\}
\def\ser#1#2{{#1}_{#2}(\vec{x})}
\def\sinpi#1{\sin\pi#1}
\def\Bb#1#2#3#4{A(#1,#2,#3,#4)}
\def\ds{\displaystyle}
\title{Relations among complementary and supplementary pairings of
Saalsch\"utzian ${}_4F_3(1)$ series}
\author{R. M. Green \and Ilia D. Mishev \and Eric Stade}
\begin{document}

\maketitle

\begin{abstract}
We investigate  sums $K(\vec{x})$ and $L(\vec{x})$ of pairs of (suitably normalized) Saalsch\"utzian ${}_4F_3(1)$ hypergeometric series, and develop a theory of relations among these $K$ and $L$ functions.

The function $L(\vec{x})$ has been studied extensively in the literature, and has been shown to satisfy a number of two-term and three-term relations with respect to the variable $\vec{x}$.  More recent works have framed these relations in terms of   Coxeter group actions on $\vec{x}$,   and have developed a similar theory of two-term and three-term relations for $K(\vec{x})$.

In this article, we derive ``mixed'' three-term relations, wherein any one of the $L$ (respectively, $K$) functions arising in the above context may be expressed as a linear combination of two of the above $K$ (respectively, $L$) functions.  We show that, under the appropriate Coxeter group action,  the resulting set of three-term relations (mixed and otherwise) among $K$ and $L$ functions partitions  into eighteen orbits. We  provide an explicit example of a relation from each orbit.

We further classify the eighteen orbits into five types, with each type uniquely determined by the distances (under a certain natural metric) between the $K$ and $L$ functions in the relation.  We show that the type of a relation dictates the complexity (in terms of both number of summands and number of factors in each summand) of the coefficients of the $K$ and $L$ functions therein.
\end{abstract}

\section{Introduction}
The theory of hypergeometric series of type ${}_2F_1$ was
systematically developed by Gauss \cite{Ga}. Subsequently,
generalized hypergeometric series of type ${}_pF_q$, where $p$ and
$q$ are positive integers, were studied in the late nineteenth and
early twentieth century by Thomae \cite{T}, Barnes \cite{Bar1,Bar2},
Ramanujan  \cite{Har}, Whipple \cite{Wh1,Wh2,Wh3}, Bailey
\cite{Ba1,Ba}, and others. In particular, relations among ${}_3F_2,
{}_4F_3, {}_7F_6$, and ${}_9F_8$ series of unit argument were obtained.  More recently, a number of these relations, as well as analogous relations among
 basic hypergeometric series, have been put into
group-theoretic frameworks.  See the papers of Beyer et al.\ \cite{BLS}, Srinavasa
Rao et al.\ \cite{Rao}, Formichella et al.\ \cite{FGS}, Roy \cite{Roy},
Mishev \cite{M1}, Van der Jeugt and Srinavasa Rao \cite{V}, and Lievens
and Van der Jeugt \cite{LJ1,LJ2}. Other papers of interest include those of
Groenevelt \cite{Gro}, van de Bult et al.\ \cite{BRS}, and
Krattenthaler and Rivoal \cite{KR}.

Among numerous relatively recent applications, hypergeometric series have played a role in   the theory of automorphic functions, cf. 
Bump \cite{Bu}, Stade \cite{St1,St2,St3,St4}, and Stade and Taggart
\cite{ST}; and in atomic and
molecular physics, cf.   works by Drake \cite{Dra}, Grozin
\cite{Groz}, and Raynal \cite{R}.

In this paper, we expand on the work  in \cite{FGS} and in \cite{M1} concerning  certain sums of pairs of hypergeometric series.

More specifically:  Formichella et al.,  in \cite{FGS}, study a function denoted $K(a;b,c,d;e,f,g)$,
where 
$a,b,c,d,e,f,g \in \mathbb{C}$ and $e+f+g-a-b-c-d=1$, which is
a linear combination of  two Saalsch\"utzian ${}_4F_3(1)$ hypergeometric series. (This function $K(a;b,c,d;e,f,g)$ arises in the theory of archimedean
zeta integrals for automorphic $L$ functions, cf.  \cite{St4,ST}.) It
is shown that   $K(a;b,c,d;e,f,g)$ is invariant under a certain action of the symmetric group $S_6$ on the affine hyperplane \begin{equation*}V=\{(a,b,c,d,e,f,g)^T\in \mathbb{C}^7\colon e+f+g-a-b-c-d=1\},\end{equation*} and thus the
existence of 720 two-term relations for $K(a;b,c,d;e,f,g)$ is
demonstrated. Furthermore, a set of 4960 three-term relations
satisfied by $K(a;b,c,d;e,f,g)$ is described, and is classified into five families, based on the
notion of Hamming type.

Analogously, Mishev investigates, in \cite{M1}, a function $L(a,b,c,d;e;f,g)$ that is a
different linear combination of two Saalsch\"utzian ${}_4F_3(1)$
hypergeometric series.   (The
 series $L(a,b,c,d;e;f,g)$ arises,
in \cite{St2}, in the evaluation of the Mellin transform of a
spherical principal series $GL(4,\mathbb{R})$ Whittaker function, and has been studied in other contexts, as described below.)  It is shown in \cite{M1} that the Coxeter group
$W(D_5)$, which has 1920 elements, is an invariance group for the
function $L(a,b,c,d;e;f,g)$, and thus the existence of 1920 two-term
relations for $L(a,b,c,d;e;f,g)$ is demonstrated. Moreover, it is shown that $L(a,b,c,d;e;f,g)$ satisfies  220 three-term relations, which are classified into
two families based on the notion of $L$-coherence.

As noted above, the $L$ function  has been considered elsewhere.
Versions of this function, written in terms of very-well-poised
${}_7F_6(1)$ series, were studied previously by Bailey \cite{Ba1},
Whipple \cite{Wh3}, and Raynal \cite{R}. (Those authors study two-term
and three-term relations, but do not put their results in a
group-theoretic framework.) The $L$ function also appears as a Wilson
function (a nonpolynomial extension of the Wilson polynomial) in
\cite{Gro}. Van de Bult et al. \cite{BRS} examine generalizations to
elliptic, hyperbolic, and trigonometric hypergeometric functions. A
basic hypergeometric series analog of the $L$ function, in terms of
${}_8\phi_7$ series, was studied by Van der Jeugt and Srinavasa Rao
\cite{V} and by Lievens and Van der Jeugt \cite{LJ1}.

In this paper, we build up on the three-term relations derived in
\cite{FGS} and \cite{M1}. In particular, we obtain three-term
relations that involve one $K$ and two $L$ functions, called
$(K,L,L)$ relations, and three-term relations that involve one $L$
and two $K$ functions, called $(L,K,K)$ relations. We further
classify the $(K,L,L)$ relations into four families and the $(L,K,K)$
relations into seven families. The classification is done according to the
theory of group actions, as follows.

We consider the action of a matrix group $M$,  
isomorphic to the Coxeter group $W(D_6)$ of order 23040, on the hyperplane $V$.  
(The group $M$ is the same, elementwise, as the groups $M_K$ and $M_L$
considered in \cite{FGS} and \cite{M1}, respectively.) We further
consider subgroups $G_K$ and $G_L$ of $M$: $G_K$, which  is  isomorphic to $S_6$,  is an invariance group for $K(a;b,c,d;e,f,g)$, while $G_L$, which is isomorphic to the Coxeter group $W(D_5)$ of order 1920,  is an invariance group for $L(a,b,c,d;e;f,g)$.

For $\vec{x} =(a,b,c,d,e,f,g)^T\in V$, let us denote  $K(a;b,c,d;e,f,g)$ and $L(a,b,c,d;e;f,g)$ by $K(\vec{x})$ and
$L(\vec{x})$, 
respectively.
We show in this work that:

\begin{enumerate}

\item[(a)] For any $  \sigma, \tau, \mu \in M$ 
such that $\tau$ and
$\mu$ are in different right cosets of $G_L$ in $M$, there exists a
three-term relation involving $K(\sigma\vec{x}), L(\tau\vec{x})$,
and $L(\mu\vec{x})$. The total number of such $(K,L,L)$ relations is $32
\times \binom{12}{2} = 2112$.  
These 2112  relations fall into four orbits, of sizes 960,
480, 480, and 192 respectively, under the action of $M$ by right multiplication on 
$(G_K \backslash M) \times S(L^2)$.  Here,
$S(L^2)$ denotes the set of two-element subsets of $G_L \backslash M$.

\item[(b)] For any $\sigma, \tau, \mu \in M$ 
such that $\tau$ and
$\mu$ are in different right cosets of $G_K$ in $M$, there
exists a three-term relation involving $L(\sigma\vec{x}),
K(\tau\vec{x})$, and $K(\mu\vec{x})$. 
The total number of such
$(L,K,K)$ relations is $12 \times \binom{32}{2} = 5952$. 
These 5952 $(L,K,K)$ relations fall into
seven orbits, of sizes 1920, 960, 960, 960, 480, 480, and 192 respectively, under the action of $M$ by right multiplication on 
$(G_L \backslash M) \times S(K^2)$ (where $S(K^2)$ denotes the set of two-element subsets of $G_K \backslash M$).

\end{enumerate}

The above results generalize expressions found in \cite{M} for 
$K(\vec{x})$ as a sum of two particular $L$ functions,  and for 
$L(\vec{x})$ as a sum of two particular $K$ functions.

Thus there are eighteen distinct families of three-term relations for $K$ and $L$ functions:  two relating three $L$ functions, five relating three $K$ functions, four relating a $K$ function to a pair of $L$ functions, and seven relating an $L$ function to a pair of $K$ functions.  In this paper, we present an explicit relation from each family.  We further classify the  eighteen families into
five types, according a certain metric on the set $T=(G_K\backslash M)\cup (G_L\backslash M)$.  Finally, a correspondence is exhibited between the type of a relation and the complexity (measured in terms of the number of summands, and the number of factors in each summand) of the coefficients therein.

\section{Definitions and notations}
The hypergeometric series of type ${}_{p+1}F_p$ is defined by
\begin{equation*}
\label{210}
{}_{p+1}F_p \left[ \genfrac{} {}{0pt}{}{a_1,a_2,\ldots,a_{p+1};}{
 b_1,b_2,\ldots,b_p;} z\right] =
\sum_{n=0}^{\infty} \frac{(a_1)_n(a_2)_n \cdots
(a_{p+1})_n}{n!(b_1)_n(b_2)_n \cdots (b_p)_n}z^n,
\end{equation*}
where $p \in \mathbb{Z}^{+}$, $a_1,a_2,\ldots,a_{p+1},
b_1,b_2,\ldots,b_p, z\in \mathbb{C}$, and the rising factorial
$(a)_n$ is given by
\begin{equation*}
(a)_n=\left\{ \begin{array}{ll}
a(a+1)\cdots(a+n-1), & n>0,\\
1, & n=0.
\end{array} \right.
\end{equation*}Note that, by the functional equation\begin{equation*}\Gamma(s+1)=s\Gamma(s) \quad (\hbox{Re}(s)>0)\end{equation*}for the gamma function, we have $$(a)_n=\frac{\Gamma(a+n)}{\Gamma(a)}$$ for $a\not
\in\{0,-1,-2,\ldots\}$.

The series in (\ref{210}) converges absolutely if $|z|<1$, or if
$|z|=1$ and $\textrm{Re}(\sum_{i=1}^pb_i-\sum_{i=1}^{p+1}a_i)>0$ (see
\cite[p.\ 8]{Ba}). To avoid poles, we assume that no denominator parameter
$b_1,b_2,\ldots,b_p$ is a negative integer or zero. If a numerator
parameter $a_1,a_2,\ldots,a_{p+1}$ is a negative integer or zero, the
series has only finitely many nonzero terms, and is said to 
{terminate}.

When $z=1$, we say that the series is of {\it unit argument}, or {\it of type}
${}_{p+1}F_p(1)$. If $\sum_{i=1}^pb_i-\sum_{i=1}^{p+1}a_i=1$, the
series is called {\it Saalsch\"utzian} (or, equivalently, {\it balanced}). If
$1+a_1=b_1+a_2=\cdots=b_p+a_{p+1}$, the series is called 
{\it well-poised}.
A well-poised series that satisfies $a_2=1+\frac{1}{2}a_1$ is called
{\it very-well-poised}.

The functions $K(\vec{x})=K(a;b,c,d;e,f,g)$ and $L(\vec{x})=L(a,b,c,d;e;f,g)$ studied in
\cite{FGS} and \cite{M1}, respectively, are defined by
\begin{align}\label{Kdef}
 K(\vec{x})\nr=\,&\frac{{}_4F_3\left[ \genfrac{} {}{0pt}{} {\displaystyle a,b,c,d;}{\ds
e,f,g;}1\right]}{\sin\pi a\,\Gamma(a)\Gamma(1+a-e)\Gamma(1+a-f)
\Gamma(1+a-g)\Gamma(e)
\Gamma(f)\Gamma(g)} \nr+\,&\frac{{}_4F_3\left[  \genfrac{} {}{0pt}{}{\displaystyle a,1+a-e,1+a-f,1+a-g;}{
\displaystyle 1+a-b,1+a-c,1+a-d;}1\right]} {\sin\pi a\,\Gamma(a)\Gamma(b)\Gamma(c) \Gamma(d)\Gamma(1+a-b)
\Gamma(1+a-c)\Gamma(1+a-d)}
\nr=\,&\frac{1}{\sin\pi a\,\gg{a}\gg{b}\gg{c}\gg{d}\gg{a}\gg{1+a-e}\gg{1+a-f}\gg{1+a-g}}\nr
\times\,&\biggl({_4}F_3^* \biggl[ \genfrac{} {}{0pt}{}{a,b,c,d;}{ e,f,g;}1\biggr] +{_4}F_3^* \biggl[ \genfrac{} {}{0pt}{}{a,1+a-e,1+a-f,1+a-g;}{ 1+a-b,1+a-c,1+a-d;}1\biggr]\biggr),
\end{align}
and
\begin{align}\label{Ldef}
 L(\vec{x})\nr  =\,&\frac{{}_4F_3\left[  \genfrac{} {}{0pt}{}{\displaystyle a,b,c,d;}{ \displaystyle
e,f,g;}1\right]}{\sin \pi e\,\Gamma(1+a-e)\Gamma(1+b-e)\Gamma(1+c-e)
\Gamma(1+d-e)\Gamma(e)
\Gamma(f)\Gamma(g)} \nr-\,&\frac{{}_4F_3\left[  \genfrac{} {}{0pt}{}{\displaystyle 1+a-e,1+b-e,1+c-e,1+d-e;}{
\displaystyle 2-e,1+f-e,1+g-e;}1\right]} {\sin \pi e\,
\Gamma(a)
\Gamma(b)\Gamma(c)\Gamma(d)\Gamma(2-e)\Gamma(1+f-e)\Gamma(1+g-e)}
\nr=\,&\frac{1}{ \biggl[ \genfrac{} {}{0pt}{}{\ds\sin\pi e\,\gg{a}\gg{b}\gg{c}\gg{d}\gg{1+a-e}}{\ds\times\gg{1+b-e}\gg{1+c-e}\gg{1+d-e}}\biggr]}\nr
\times\,&\biggl({_4}F_3^* \biggl[ \genfrac{} {}{0pt}{}{a,b,c,d;}{ e,f,g;}1\biggr] -{_4}F_3^* \biggl[ \genfrac{} {}{0pt}{}{\displaystyle 1+a-e,1+b-e,1+c-e,1+d-e; }{
\displaystyle 2-e,1+f-e,1+g-e;}1\biggr]\biggr),
\end{align}
where  \begin{align*}
{_4}F_3^* \biggl[ \genfrac{} {}{0pt}{}{a,b,c,d;}{ e,f,g;}1\biggr]=\,&\frac{\Gamma(a)\Gamma(b)\Gamma(c)\Gamma(d)}{
 \Gamma(e)\Gamma(f)\Gamma(g)}
{{_4}F_3 \biggl[ \genfrac{} {}{0pt}{}{a,b,c,d; }{ e,f,g;}1\biggr]}\nr=\,&\sum_{k=0}^\infty\frac
{\Gamma(k+a)\Gamma(k+b)\Gamma(k+c)\Gamma(k+d)}{
k!\Gamma(k+e)\Gamma(k+f)\Gamma(k+g)}  \end{align*} and $\vec{x}=(a,b,c,d,e,f,g)^T\in V$.

\begin{Remark}
\label{R210}  The function $K(\vec{x})$ defined  above is, in fact, $1/({\sin\pi{a}\,\gg{a}})$ times the function  $K(\vec{x})$ considered in \cite{FGS}.  The renormalization provided here will allow for greater simplicity and uniformity of the three-term relations studied in Sections 5 and 6 below.  Moreover, because all invariances (two-term relations) for $K$ described in \cite{FGS} preserve the first coordinate  of $(a,b,c,d,e,f,g)^T$, such invariances continue to hold under our new normalization.

\end{Remark}

We will call the two Saalsch\"utzian ${}_4F_3(1)$ series in 
the definition of the $K$ function 
{\it complementary} with respect 
to the parameter $a$, and will call the two 
Saalsch\"utzian ${}_4F_3(1)$ series in 
the definition of the $L$ function 
{\it supplementary} with respect 
to the parameter $e$. 

We note that, by \cite[Eq.\ $(7.5.3)$]{Ba}, the $L$ function can be
expressed as a very-well-poised ${}_7F_6(1)$ series:
\begin{align}
\label{eq240}
&L(a,b,c,d;e;f,g) \nr
\quad&=\frac{\Gamma(1+A)}{\left[ \genfrac{} {}{0pt}{}{\displaystyle
\pi\Gamma(1+A-B)\Gamma(1+A-C)\Gamma(1+A-D) }{ \displaystyle \times
\Gamma(1+A-E)\Gamma(1+A-F)\Gamma(2+2A-B-C-D-E-F)}\right]} \nr
 \times\,& {}_7F_6\left[ \genfrac{} {}{0pt}{}{\displaystyle A,1+\frac{1}{2}A,B,C,D,E,F;
}{ \displaystyle
\frac{1}{2}A,1+A-B,1+A-C,1+A-D,1+A-E,1+A-F;}1\right],
\end{align}
where
\begin{eqnarray*}
A=d+g-e, \quad B=g-a, \quad C=g-b, \\
D=g-c, \quad E=d, \quad F=1+d-e,
\end{eqnarray*}
and we require that 
$\textrm{Re}(2+2A-B-C-D-E-F)=\textrm{Re}(f-d)>0$.
The above representation of the $L$ function as a 
very-well-poised ${}_7F_6(1)$ series can also
be written as
\begin{equation*}
L(a,b,c,d;e;f,g)=\frac{\psi[A;B,C,D,E,F]}{\pi}, 
\end{equation*}
where $\psi$ is the function defined in \cite[Eqs.\ (2.1) and (2.11)]{Wh3}.
Thus, the results concerning the
$L$ function can also be interpreted in terms of the very-well-poised
${}_7F_6(1)$ series given in (\ref{eq240}). However, in the present
paper, we primarily view the $L$ function as a linear combination of
two Saalsch\"utzian ${}_4F_3(1)$ series, because this representation
connects the $L$ function more directly  to the $K$ function, which is also defined
as such a linear combination.
We also note that the very-well-poised ${}_7F_6(1)$ series version of
the $L$ function  converges only for appropriate values of the parameters,
while the Saalsch\"utzian condition $e+f+g-a-b-c-d=1$ guarantees the
convergence of both ${}_4F_3(1)$ series in the definition of the
$L$ (and the $K$) function.

\section{Previously obtained results concerning the functions 
$K(\vec{x})$ and $L(\vec{x})$}
In this section, we review the results obtained in \cite{FGS} and
\cite{M1} regarding the functions 
$K(\vec{x})$ and $L(\vec{x})$,  respectively.

Consider the matrix group $GL(7,\mathbb{C})$, acting on $\mathbb\C^7$ from the left.  If $\sigma\in S_7$, we will
identify $\sigma$ with the element of $GL(7,\mathbb{C})$ that permutes
the standard basis $\{e_1,e_2,\ldots,e_7\}$ of the complex vector space
$\mathbb{C}^7$ according to the permutation $\sigma$. For example,
\begin{equation*}
(123)=
\left( \begin{array}{ccccccc}
0 & 0 & 1 & 0 & 0 & 0 & 0 \\
1 & 0 & 0 & 0 & 0 & 0 & 0 \\ 
0 & 1 & 0 & 0 & 0 & 0 & 0 \\ 
0 & 0 & 0 & 1 & 0 & 0 & 0 \\ 
0 & 0 & 0 & 0 & 1 & 0 & 0 \\ 
0 & 0 & 0 & 0 & 0 & 1 & 0 \\ 
0 & 0 & 0 & 0 & 0 & 0 & 1         
\end{array} \right).
\end{equation*}

We will also let  $A \in GL(7,\mathbb{C})$ be the matrix
\begin{equation}
\label{310}
A=
\left( \begin{array}{ccccccc}
1 & 0 & 0 & 0 & 0 & 0 & 0 \\
0 & 1 & 0 & 0 & 0 & 0 & 0 \\ 
0 & 0 & -1 & 0 & 0 & 0 & 1 \\ 
0 & 0 & 0 & -1 & 0 & 0 & 1 \\ 
0 & 0 & -1& -1 &1&0 & 1 \\ 
0 & 0 & -1&-1 & 0&1 & 1 \\ 
0 & 0 & 0 & 0 & 0 & 0 & 1         
\end{array} \right),
\end{equation}so that, if $\vec{x}=(a,b,c,d,e,f,g)^T \in V$, then
$$\aligned A\vec{x}&=(a,b,g-c,g-d,e+g-c-d,f+g-c-d,g)^T\\&=(a,b,g-c,g-d,1+a+b-f,1+a+b-e,g)^T\in V.\endaligned$$

We consider the subgroup $M$ of $GL(7,\mathbb{C})$ defined by:
\begin{equation*}
M=\langle (12),(23),(34),(56),(67),A \rangle.
\end{equation*}
It is shown in \cite{FGS} and \cite{M1} that $M$ is
isomorphic to the Coxeter group $W(D_6)$ of order 23040.

The Coxeter group $W(D_6)$ is well-known to have a 
center consisting of two elements 
(see \cite[pp.\ 82 and 132]{Hum}). We denote by $w_0$ 
the unique nonidentity element in the center
of $M$. The element $w_0$ is called the {\it central involution},
and is computed to be
\begin{equation*}
w_0=(12)(34)[[(1234)(567)]^2A]^4. 
\end{equation*}
We observe that, if $\vec{x}=(a,b,c,d,e,f,g)^T \in V$, then
\begin{equation*}
w_0\vec{x}=
(1-a,1-b,1-c,1-d,2-e,2-f,2-g)^T. 
\end{equation*}

We next consider the following subgroups of $M$:
\begin{equation*}
G_K=\langle (23),(34),(56),(67),A \rangle
\end{equation*}
and\begin{equation*}
G_L=\langle (12),(23),(34),(67),A \rangle.
\end{equation*}Then:

\begin{enumerate}\item[(a)]
It is shown in \cite{FGS} that $G_K$ is an invariance group for 
$K(\vec{x})$; that is,
\begin{equation*}
K(\vec{x})=K(\alpha\vec{x}) \textrm{ for all } \alpha \in G_K,
\end{equation*}
and, furthermore, that $G_K$ is 
isomorphic to the symmetric group $S_6$ of
order 720. This gives 720 invariances, or 
two-term relations, for the 
function $K$. 

\item[(b)]
Similarly, it is shown in \cite{M1} that $G_L$ is an invariance group for
$L(\vec{x})$, meaning
\begin{equation*}
L(\vec{x})=L(\beta\vec{x}) \textrm{ for all } \beta \in G_L,
\end{equation*}
and, furthermore, that $G_L$ is 
isomorphic to the Coxeter group $W(D_5)$
of order 1920. This gives 1920 invariances, or 
two-term relations, for the 
function $L$.\end{enumerate}

We can express the above invariances of $K(\vec{x})$ and $L(\vec{x})$ succinctly if we reparameterize the $K$ and $L$ functions as follows.  Let 
\begin{equation}\label{Ktwiddle}
\widetilde{K}(x_0,x_1,x_2,x_3,x_4,x_5)
= K(A;B,C,D; 
 E,F,G )
\end{equation}
and
\begin{equation}\label{Ltwiddle}
\widetilde{L}(x_0,x_1,x_2,x_3,x_4,x_5)
= L(A,B,C,D; 
  E;F,G ),
\end{equation}
where
\begin{align}\label{twiddleparams}&A=\frac{1}{2}+x_0+x_1+x_2+x_3+x_4+x_5, \hskip2.25pt
B=\frac{1}{2}+x_0+x_1+x_2-x_3-x_4+x_5,\nr&C=\frac{1}{2}+x_0+x_1+x_2+x_3-x_4-x_5, \hskip2.25pt
D=\frac{1}{2}+x_0+x_1+x_2-x_3+x_4-x_5,\nr&E=1+2x_1+2x_2, \ 
F=1+2x_0+2x_1, \ 
G=1+2x_0+2x_2.\end{align}(This reparameterization of the $L$ function is equivalent to the
reparameterization given in \cite[equation (3.1)]{Wh3}.)
Then the following is readily shown: 

\begin{enumerate}
\item[(a)] The invariance of $K(\vec{x})$ under the group
$G_K\cong  S_6$  amounts to the
equivalence of the 720 functions
of the form  
\begin{align*}\{&\widetilde{K}(x_{i_0},x_{i_1},x_{i_2},x_{i_3},x_{i_4},x_{i_5})\colon (i_0,i_1,i_2,i_3,i_4,i_5)  \\&\hbox{is a permutation of }
(0,1,2,3,4,5)\}.\end{align*}

\item[(b)] The invariance of $L(\vec{x})$ under the group
$G_L\cong  W(D_5)$  amounts to the
equivalence of the 1920 functions \begin{align*}\{& \widetilde{L}(x_0,\pm x_{i_1},\pm x_{i_2},\pm x_{i_3},\pm x_{i_4},\pm x_{i_5})\colon (i_1,i_2,i_3,i_4,i_5)  \hbox{ is a permutation}\\&\hbox{ of }
(1,2,3,4,5)\hbox{ and the number of negative signs is even}\}.\end{align*} 
\end{enumerate}

The three-term relations for $K(\vec{x})$ and
$L(a,b,c,d;e;f,g)$ are governed by 
the right cosets of their invariance
groups $G_K$ and $G_L$, respectively, in $M$, as follows.

First we discuss the three-term relations
for $K(\vec{x})$.
The number of right cosets of $G_K$ in $M$ is 
${|M|}/{|G_K|}={23040}/{720}=32$. In \cite{FGS}, these cosets are indexed in the following way:  consider the transformation $R$ of $\C^4$ defined by  $$R(x,y,z,t)=(y,z,t,x).$$  Denote the images of the vectors $(a,b,c,d)$ and $(1,e,f,g)$ under the $j$th power transformation $R^j$ by  $(a_j,b_j,c_j,d_j)$ and $(1_j,e_j,f_j,g_j)$ respectively.  Also, for  $k \in \{0,1, \ldots, 15\}$, write $k=4q+r$ with $0\le q,r\le 3$. Then we denote by $p_k$ the element of $G_K\backslash M$ that contains the transformation \begin{equation}\label{pkdef}\vec{x}\to \biggl( \genfrac{} {}{0pt}{}{1+a_r-1_q,1+b_r-1_q,1+c_r-1_q,1+d_r-1_q, 
}{ 1+e_q-1_q,1+f_q-1_q,1+g_q-1_q}\biggr)^T\in V,\end{equation} and by $n_k$ the element of $G_K\backslash M$ that contains the transformation  \begin{equation}\label{nkdef}\vec{x}\to \biggl( \genfrac{} {}{0pt}{}{1_q-a_r,1_q-b_r,1_q-c_r,1_q-d_r ,
}{ 1+1_q-e_q,1+1_q-f_q,1+1_q-g_q }\biggr)^T\in V.\end{equation}
Note that, for each 
$k \in \{0,1, \ldots, 15\}$, $p_k$ and $n_k$ are interchanged under the action
of the central involution $w_0$. 

\begin{Definition}
\label{hamming}
We define the 
{\it Hamming distance}  $d(\sigma,\tau)$ between elements 
$\sigma,\tau\in G_K\backslash M$ by 
\begin{align}d(p_{4q+r},p_{4s+t})=  d(n_{4q+r},n_{4s+t})=\,&4-2(\delta_{q,s}+\delta_{r,t}),\nr d(p_{4q+r},n_{4s+t})=d(n_{4q+r},p_{4s+t})=\,&2+2(\delta_{q,s}+\delta_{r,t}),\nonumber\end{align}for $q,r,s,t\in\{0,1,2,3\}$, where $\delta_{q,s}$ denotes the Kronecker delta.
\end{Definition}

\begin{Remark} The definition of Hamming distance given in \cite{FGS} appears, {\it a priori}, quite different from the one given above, but, as may readily be checked, the two definitions are equivalent.\end{Remark}

For example, $$d(p_3,n_8)=2+2(\delta_{0,2}+\delta_{3,0})=2.$$Note also that, for any $k \in \{0,1, \ldots, 15\}$,  $d(p_k,n_k)=6$.

It is shown in \cite[Lemma $6.3$]{FGS} that the right coset
space $G_K \backslash M$ is a metric space with respect
to Hamming distance, and that
the action of $M$ on $G_K\backslash M$ by right multiplication is an isometry with respect to this  distance.

\begin{Definition}
\label{D320}
Let $S(K^3)$ denote the set of three-element subsets of  $G_K\backslash M$. The 
{\it Hamming type} of  
$\{\sigma,\tau,\mu\}\in  S(K^3)$ is 
defined to be the three-digit integer
$abc$, where $a$ is the shortest among the Hamming distances
  $d(\sigma,\tau)$,   $d(\sigma,\mu)$, and   $d(\tau,\mu)$; $b$ 
is the next shortest;
and $c$ is the longest. 
\end{Definition}

Note that $|S(K^3)|=\binom{32}{3}=4960$. It is shown in  \cite[Proposition 6.5]{FGS} 
that each element of $S(K^3)$ has Hamming type $222,224,244,246$, or $444$, and that the action of $M$ on $S(K^3)$ by right multiplication (elementwise) partitions $S(K^3)$
into five orbits, with each orbit consisting of all elements of  a given Hamming type.

Write $K_\sigma(\vec{x})$ for $K(\alpha\vec{x})$ whenever  $\alpha\in M$ belongs to the right coset $\sigma\in G_K\backslash M$. A  three-term relation among the functions
$K_{\sigma} , K_{\tau}$, and 
$K_{\mu}$
is called an $abc$ {\it relation}
if  
$\{\sigma,\tau,\mu\}$ is of Hamming
type $abc$. Explicit three-term relations for each of the Hamming types $222,224, 244, 246$, and $444$
 are obtained
in Propositions $7.3$, $7.4$, $7.5$,
$7.6$,  $7.7$, respectively, of \cite{FGS}.  Applying the action of $M$ to the $K$ functions and the coefficients of these five relations, we thereby obtain 4960 three-term relations, partitioned by Hamming type into five families.
\begin{Remark}
\label{R315} Let $\widetilde{K}$ be as defined in equations (\ref{Ktwiddle}) and (\ref{twiddleparams}) above.  Then the set of 23040 $K$ functions that relate to each other via the three-term relations just described is equal to the set
\begin{align*}\{&\widetilde{K}(\pm x_{i_0},\pm x_{i_1},\pm x_{i_2},\pm x_{i_3},\pm x_{i_4},\pm x_{i_5})\colon 
 (i_0,i_1,i_2,i_3,i_4,i_5)\hbox{ is a permutation}\nr
&\hbox{of }(0,1,2,3,4,5)\hbox{ and the number of negative signs is even}\}.\end{align*}
Moreover, the set 
$$\{\widetilde{K}( \pm x_0,\pm x_1,\pm x_2,\pm x_3,\pm x_4,\pm x_5)\colon \hbox{the number of negative signs is even}\}$$equals the set$$\{K_\sigma(\vec{x})\colon \sigma\in G_K\backslash M\}.$$
\end{Remark}
We turn next to a discussion of three-term relations for $L(\vec{x})$.
The number of right cosets of $G_L$ in $M$ is 
${|M|}/{|G_L|}={23040}/{1920}=12$. The 12 right cosets
of $G_L$ in $M$ are labeled in \cite{M1} by 
$1,2,\ldots,6,\overline{1},\overline{2},\ldots,\overline{6}$, where,  for
each $i \in \{1,2,\ldots,6\}$,   $i $ and ${\overline{i}}$ are
interchanged under the action of the central involution $w_0$. In particular, the corresponding $L$ functions are defined as follows:

\begin{align}\label{Lcosetsdef}
& L_6(\vec{x})=L\left(a,b,c,d;   e;f,g \right),\nr&
 L_5(\vec{x})=L\left(a,b,c,d;  f;e,g \right),\nr&
 L_4(\vec{x})=L\left(a,b,c,d;  g;f,e \right),\nr &
 L_3(\vec{x})=L\left(a,1+a-e,1+a-f,1+a-g;  1+a-b;1+a-c,1+a-d \right),\nr
&L_2(\vec{x})=L\left(a,1+a-e,1+a-f,1+a-g;  1+a-c;1+a-b,1+a-d \right),\nr&
L_1(\vec{x})=L\left(a,1+a-e,1+a-f,1+a-g;  1+a-d;1+a-c,1+a-b \right),\nr 
&L_{\overline{6}}(\vec{x})=L\left(1-a,1-b,1-c,1-d;  2-e;2-f,2-g \right),\nr&  L_{\overline{5}}(\vec{x})=L\left(1-a,1-b,1-c,1-d;  2-f;2-e,2-g \right),\nr
&L_{\overline{4}}(\vec{x})=L\left(1-a,1-b,1-c,1-d;  2-g;2-f,2-e \right),\nr&
 L_{\overline{3}}(\vec{x})=L\left(1-a,e-a,f-a,g-a;  1+b-a;1+c-a,1+d-a \right),\nr
&L_{\overline{2}}(\vec{x})=L\left(1-a,e-a,f-a,g-a;  1+c-a;1+b-a,1+d-a \right),\nr
&L_{\overline{1}}(\vec{x})=L\left(1-a,e-a,f-a,g-a;  1+d-a;1+c-a,1+b-a \right).
\end{align} 

We have

\begin{Definition}
\label{coherdef}
A subset of $G_L \backslash M$ is  is called 
{\it $L$-coherent}
if no two elements of this subset are interchanged by the action of the central
involution $w_0$. A subset of $G_L \backslash M$ that is not $L$-coherent  is called 
{\it $L$-incoherent}.  
\end{Definition}

We note that a subset of $G_L\backslash M$ is   $L$-coherent if and only if it does not
contain both elements of the form $i$ and $\overline{i}$, for any
$i \in \{1,2, \ldots, 6\}$.

The group $M$ acts by right multiplication (elementwise) on the set $S(L^3)$ of 
three-element subsets of
$G_L \backslash M$.
There are $\binom{12}{3}=220$ 
such subsets. It is shown in 
\cite[Proposition $6.6$]{M1} that this group action partitions $S(L^3)$ into two orbits---an orbit of length 160,
 consisting of the $L$-coherent elements of $S(L^3)$, and an  orbit of
length 60, consisting of the $L$-incoherent elements.

A three-term relation among the functions
$L_i(\vec{x}), L_j(\vec{x})$, and $L_k(\vec{x})$, for $i,j,k\in \{1,2,\ldots,6,\overline1,\overline2,\ldots,\overline6\}$, is said to
be {\it $L$-coherent} if  $\{i,j,k\}$ is an $L$-coherent set, and {\it $L$-incoherent} otherwise.
Explicit examples of $L$-coherent and $L$-incoherent relations are given in  Propositions $7.2$ and  $7.4$, respectively, of \cite{M1}.
  Applying the action of $M$ to the $L$ functions and the coefficients of these two relations, we thereby obtain 220 three-term relations, partitioned by coherence into two families.

\begin{Remark}
Let $\widetilde{L}$ be as defined in equations (\ref{Ltwiddle}) and (\ref{twiddleparams}) above.  Then the set of 23040 $L$ functions that relate to each other via the three-term relations just described is equal to the set
\begin{align*}\{&\widetilde{L}(\pm x_{i_0},\pm x_{i_1},\pm x_{i_2},\pm x_{i_3},\pm x_{i_4},\pm x_{i_5})\colon 
 (i_0,i_1,i_2,i_3,i_4,i_5)\hbox{ is a permutation}\nr
&\hbox{of }(0,1,2,3,4,5)\hbox{ and the number of negative signs is even}\}.\end{align*}
Moreover,  the set
$$\{\widetilde{L}(  \pm x_i, x_{j(i)}, x_{k(i)}, x_{\ell(i)}, x_{m(i)}, x_{n(i)})\colon 0\le i\le 5 \},$$where, for a given $i\in\{0,1,2,3,4,5\}$,    $j(i),k(i),\ell(i),m(i)$, and $n(i)$ denote the distinct elements of $\{0,1,2,3,4,5\}\backslash \{i\}$ (in some fixed order), equals the set$$\{L_\sigma(\vec{x})\colon \sigma\in G_L\backslash M\}.$$
\label{R320} 
\end{Remark}

\section{Coxeter group actions and orbits}
In this section, we describe the group-theoretic structure
behind the $(K,L,L)$ and $(L,K,K)$ relations. To this end, we
study the orbits of the action by right multiplication (elementwise) of the
group $M$ on the sets \begin{align*}S(K,L^2)=G_K\backslash M \times S(L^2) \hbox{ and }S(L,K^2)=G_L\backslash M \times S(K^2).\end{align*} (Here, $S(L^2)$ denotes the set of two-element subsets of $G_L\backslash M$, and similarly for $S(K^2)$.)
All  group actions in this section are assumed to
be right group actions.

The Dynkin diagram of the Coxeter group $W(D_n)$ is given by
the graph with vertices labeled $1',1,2,\ldots,n-1$, where
$i,j \in \{1,2,\ldots,n-1\}$ are connected by an edge if and only if
$|i-j|=1$, and $1'$ is connected to $2$ only. The presentation
of $W(D_n)$ is given by
$$W(D_n)=\langle s_{1'},s_1,s_2,\ldots,s_{n-1}:(s_i s_j)^{m_{ij}}=1 \rangle,$$
where $m_{ii}=1$ for all $i$; and for $i$ and $j$ distinct, $m_{ij}=3$
if $i$ and $j$ are connected by an edge, and $m_{ij}=2$ otherwise.
It is well-known that the order of $W(D_n)$ is
$2^{n-1}n!$ (see \cite[Section $2.11$]{Hum}).

The Coxeter generators 
$s_i, i \in \{1',1,2,3,4,5\}$,
of the group $M$, which is isomorphic to
$W(D_6)$, are shown in \cite{M1} to be
$$s_1=(34), s_2=(23), s_3=(34)A, s_4=(67),
s_5=(56), s_{1'}=(12),$$
where $A$ is the matrix given in (\ref{310}).

The invariance groups $G_K$ and $G_L$ for 
the functions
$K(\vec{x})$ and
$L(\vec{x})$, respectively, are given by
$$G_K=\langle s_1,s_2,s_3,s_4,s_5 \rangle \cong S_6$$
and
$$G_L=\langle s_{1'},s_1,s_2,s_3,s_4 \rangle \cong W(D_5).$$

Consider the action of $M$ on
$G_L \backslash M = \{1,\ldots,6,\overline{1},\ldots,\overline{6}\}$.
If $t \in M$, then $t$ induces a permutation on the 
twelve-element set 
$\{1,\ldots,6,\overline{1},\ldots,\overline{6}\}$. Let
$\Phi:M \to S_{12}$ be the induced permutation
representation. For every $t \in M$, the
permutation $\Phi(t)$ can be uniquely described by
specifying its effect on the set
$\{1,\ldots,6\}$, since the elements in
$\{\overline{1},\ldots,\overline{6}\}$ will be permuted based on
where $\{1,\ldots,6\}$ are permuted with the addition
or omission of a bar.
The proposition given below is the
result in Proposition $6.3$ from \cite{M1}:

\begin{Proposition}
\label{P410}
The images of the generators 
$s_1,s_2,s_3,s_4,s_5$, and $s_{1'}$ of $M$ under the 
permutation representation 
$\Phi:M \to S_{12}$
are given by
\begin{eqnarray*}
&&\Phi(s_1)=\Phi\left((34)\right)
=\left( \begin{array}{cccccc}
1 & 2 & 3 & 4 & 5 & 6 \\
2 & 1 & 3 & 4 & 5 & 6
\end{array} \right), \\
&&\Phi(s_2)=\Phi\left((23)\right)
=\left( \begin{array}{cccccc}
1 & 2 & 3 & 4 & 5 & 6 \\
1 & 3 & 2 & 4 & 5 & 6
\end{array} \right), \\
&&\Phi(s_3)=\Phi\left((34)A\right)
=\left( \begin{array}{cccccc}
1 & 2 & 3 & 4 & 5 & 6 \\
1 & 2 & 4 & 3 & 5 & 6
\end{array} \right), \\
&&\Phi(s_4)=\Phi\left((67)\right)
=\left( \begin{array}{cccccc}
1 & 2 & 3 & 4 & 5 & 6 \\
1 & 2 & 3 & 5 & 4 & 6
\end{array} \right), \\
&&\Phi(s_5)=\Phi\left((56)\right)
=\left( \begin{array}{cccccc}
1 & 2 & 3 & 4 & 5 & 6 \\
1 & 2 & 3 & 4 & 6 & 5
\end{array} \right), \\
&&\Phi(s_{1'})=\Phi\left((12)\right)
=\left( \begin{array}{cccccc}
1 & 2 & 3 & 4 & 5 & 6 \\
\overline{2} & \overline{1} & 3 & 4 & 5 & 6
\end{array} \right). 
\end{eqnarray*}
\end{Proposition}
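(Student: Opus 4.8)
The plan is to identify $G_L\backslash M$, as an $M$-set, with a concrete twelve-element set on which the action is transparent, and then read off where each generator sends the twelve points.

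The key observation is the reparameterization (\ref{Ltwiddle})--(\ref{twiddleparams}). Under it, $M\cong W(D_6)$ acts on the coordinates $(x_0,x_1,\dots,x_5)$ by signed permutations with an even number of sign changes (this is the standard realization of $M\cong W(D_6)$, compatible with Remark~\ref{R315}); the invariance group $G_L$ is exactly the stabilizer of the $x_0$-coordinate (permutations and even sign changes of $x_1,\dots,x_5$, with $x_0$ fixed), as recorded in the displayed invariances of $\widetilde{L}$ following (\ref{twiddleparams}); and, by Remark~\ref{R320}, the twelve cosets in $G_L\backslash M$ are in bijection with the twelve signed coordinates $\pm x_0,\pm x_1,\dots,\pm x_5$ (the coset being determined by which $\pm x_k$ occupies the $x_0$-slot, with the bar accounting for the sign; compatibly, $w_0$, which negates all six coordinates, flips that sign and so interchanges $i$ with $\overline i$). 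Under this dictionary the right action of $t\in M$ on $G_L\backslash M$ is nothing but the induced action of the signed permutation $t$ on $\{\pm x_0,\dots,\pm x_5\}$ (the passage from $t$ to $t^{-1}$ being immaterial, since each $s_i$ is an involution).

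The remaining steps are: (i) pin down the labelling, i.e.\ determine which of $x_0,\dots,x_5$ corresponds to each coset $1,\dots,6$; and (ii) express each generator $s_1=(34),\,s_2=(23),\,s_3=(34)A,\,s_4=(67),\,s_5=(56),\,s_{1'}=(12)$ as an explicit signed permutation of $(x_0,\dots,x_5)$ by solving the linear system imposed by (\ref{twiddleparams}). For (i) one compares defining formulas: writing $L_6(\vec x)=\widetilde{L}(x_0,\dots,x_5)$ and substituting the relations (\ref{twiddleparams}) into (\ref{Lcosetsdef}) shows, after a short computation, that $L_6,L_5,L_4,L_3,L_2,L_1$ have $x_0,x_2,x_1,x_5,x_3,x_4$ (respectively) in the $x_0$-slot. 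For (ii), since each $s_i$ fixes $a,\dots,g$ except for a transposition of two of them (and, for $s_3=(34)A$, a subsequent application of $A$, whose own effect is $(x_1\,x_5)(x_3\,x_4)$), the induced maps on $(x_0,\dots,x_5)$ work out to $s_1\mapsto(x_3\,x_4)$, $s_2\mapsto(x_3\,x_5)$, $s_3\mapsto(x_1\,x_5)$, $s_4\mapsto(x_1\,x_2)$, $s_5\mapsto(x_0\,x_2)$, and $s_{1'}\mapsto$ the transformation $x_3\mapsto-x_4,\ x_4\mapsto-x_3$ (fixing the other four coordinates)---a sign-and-swap, the single generator producing a sign change, as befits the ``extra'' node of the $W(D_6)$ diagram. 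Combining (i) and (ii) reproduces the six permutations in the statement; in particular, $s_{1'}$ carries the coset labelled $x_4$ (namely $1$) to the coset labelled $-x_3$ (namely $\overline 2$), and $x_3$ (namely $2$) to $-x_4$ (namely $\overline 1$).

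The main obstacle, I expect, is the bookkeeping in steps (i)--(ii) for the three ``complemented'' cosets $1,2,3$ and for the generator $s_3$: unlike the direct cosets $4,5,6$, these involve the matrix $A$, whose effect on $(x_0,\dots,x_5)$ must be computed using the Saalsch\"utzian relation $e+f+g-a-b-c-d=1$, and one must keep the $x_0$-slot fixed throughout so that the resulting $G_L$-coset is read off correctly. A purely hands-on alternative, avoiding the reparameterization, is to take the coset representatives $\alpha_1,\dots,\alpha_{\overline 6}$ exhibited inside $L_1,\dots,L_{\overline 6}$ in (\ref{Lcosetsdef}) and, for each generator $s_i$, substitute $s_i\vec x$ into each $L_j$ and match the result to one of the twelve functions using the known two-term relations for $L$ (symmetry in $a,b,c,d$, symmetry in $f,g$, and invariance under $\vec x\mapsto A\vec x$), halving the work via $\overline j=j\cdot w_0$; here too the matchings that invoke $A$ (that is, $s_3$ and $s_{1'}$ on the complemented cosets) are the nontrivial ones.
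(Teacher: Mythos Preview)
Your argument is correct. The reparameterization (\ref{Ltwiddle})--(\ref{twiddleparams}) does exactly what you claim: one checks directly that the six generators act on $(x_0,\dots,x_5)$ as the signed permutations you list (in particular, the matrix $A$ acts as $(x_1\,x_5)(x_3\,x_4)$, so $s_3=(34)A$ reduces to $(x_1\,x_5)$), and the formula $x_0=(F+G-E-1)/4$ read off from (\ref{twiddleparams}) pins down the labelling $6\leftrightarrow x_0$, $5\leftrightarrow x_2$, $4\leftrightarrow x_1$, $3\leftrightarrow x_5$, $2\leftrightarrow x_3$, $1\leftrightarrow x_4$ without ambiguity. Your handling of the left/right issue (noting that the right action of $t$ on cosets matches the action of $t^{-1}$ on the signed coordinates, which is harmless for involutions) is also correct.

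As for comparison with the paper: the paper does not prove Proposition~\ref{P410} here at all---it simply quotes it from \cite[Proposition~6.3]{M1}. Your reparameterization approach is a genuine proof, and in fact a rather clean one: it makes the isomorphism $M\cong W(D_6)$ and the identification $G_L\backslash M\cong\{\pm x_0,\dots,\pm x_5\}$ explicit from the outset, so that the six permutations fall out mechanically. The ``hands-on alternative'' you sketch at the end (substituting $s_i\vec x$ into each $L_j$ and matching via the two-term relations) is closer in spirit to how such results are typically established in the source literature, but your primary route is more conceptual and no less rigorous.
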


A further description of the permutation representation
$\Phi:M \to S_{12}$
is given in the proposition below, which is the
result in Proposition $6.4$ of \cite{M1}:

\begin{Proposition}
\label{P420}
The permutation representation
$\Phi:M \to S_{12}$ is faithful, i.e.
$\textrm{ker}(\Phi)=\{I_7\}$. The embedding of $M$ into
$S_{12}$ consists of all the permutations of the form
$\left( \begin{array}{cccccc}
1 & 2 & 3 & 4 & 5 & 6 \\
j_1 & j_2 & j_3 & j_4 & j_5 & j_6
\end{array} \right),$ where each $j_i$ belongs
to the set $\{1,\ldots,6,\overline{1},\ldots,\overline{6}\}$,
the $j_i$'s are all distinct if we remove the bars,
and an even number, i.e. $0,2,4$, or $6$ of the
$j_i$'s contain a bar.
\end{Proposition}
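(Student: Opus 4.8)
The plan is to determine the image $\Phi(M)$ explicitly as the subgroup $H\le S_{12}$ described in the statement, and then to read off both assertions from a comparison of orders. Regard $S_{12}$ as acting on $\{1,\dots,6,\overline 1,\dots,\overline 6\}$, and let $H$ be the set of permutations $\pi$ commuting with the bar involution (i.e.\ $\pi(\overline i)=\overline{\pi(i)}$) for which the number of $i\in\{1,\dots,6\}$ with $\pi(i)$ barred is even. Such a $\pi$ is determined by the images $j_1,\dots,j_6$ of $1,\dots,6$: the tuple $(|j_1|,\dots,|j_6|)$ must be a permutation of $(1,\dots,6)$ and the number of barred $j_i$ must be even, so $|H|=6!\cdot 2^5=23040=|M|$. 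That $H$ is a subgroup is clear once one observes that bar-parity is additive mod $2$ under composition; concretely $H$ is the group of $6\times 6$ signed permutation matrices with an even number of sign changes, that is, a copy of $W(D_6)$.

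From Proposition~\ref{P410} one reads off that $\Phi(s_1),\dots,\Phi(s_5)$ are the adjacent transpositions $(12),(23),(34),(45),(56)$ of $\{1,\dots,6\}$ (no bars), and that $\Phi(s_{1'})$ is the permutation $r$ interchanging $1\leftrightarrow\overline 2$ and $2\leftrightarrow\overline 1$ and fixing $3,4,5,6$ (two bars). Each of these lies in $H$, so, $H$ being a group, $\Phi(M)\subseteq H$. For the reverse inclusion, $\Phi(s_1),\dots,\Phi(s_5)$ generate the subgroup $S_6\le H$ of unbarred permutations; conjugating $r$ by the element of $S_6$ taking $1,2$ to $i,j$ produces the permutation $r_{ij}\in H$ swapping $i\leftrightarrow\overline j$ and $j\leftrightarrow\overline i$. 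Then $r_{ij}\circ(ij)$ fixes every $k\ne i,j$ and sends $i\mapsto\overline i$, $j\mapsto\overline j$; these ``double sign changes'' generate the even-sign-change (``translation'') part of $H$, which together with $S_6$ generates all of $H$. Hence $\Phi(M)=H$.

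With $\Phi(M)=H$ and $|H|=23040=|M|$, we get $|\ker\Phi|=|M|/|\Phi(M)|=1$, so $\ker\Phi=\{I_7\}$ and $\Phi$ is faithful, while the identification $\Phi(M)=H$ is exactly the second assertion. The only step that is not purely mechanical is the reverse inclusion $H\subseteq\Phi(M)$: one must keep the right-action coset conventions straight while verifying that the $S_6$-conjugates of $\Phi(s_{1'})$ are precisely the $r_{ij}$ and that $S_6$ together with these generates $H$ rather than some proper subgroup---equivalently, that the ``even number of bars'' constraint is exactly what is produced (this is the standard generation of $W(D_6)$ by $S_6$ and one reflection $s_{e_i+e_j}$). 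As an alternative route to faithfulness alone, one may instead note that $\ker\Phi=\bigcap_{\alpha\in M}\alpha^{-1}G_L\alpha$ is the largest normal subgroup of $M$ contained in $G_L$; since every nontrivial normal subgroup of $M\cong W(D_6)$ contains $w_0$ (the abelian translation subgroup $(\mathbb{Z}/2)^5$ is self-centralizing in $M$ with unique minimal $S_6$-submodule $\{I_7,w_0\}$), while $w_0\notin G_L$ (as $w_0$ is central in $M$ but $G_L\cong W(D_5)$ has trivial center), this core is trivial.
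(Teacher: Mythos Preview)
The paper does not actually prove this proposition; it is quoted verbatim from \cite{M1} (Proposition~6.4 there) with the remark ``which is the result in Proposition~6.4 of \cite{M1}.'' So there is nothing to compare against in the present paper, and your task was in effect to supply a proof where none is given.

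Your main argument is correct and is the natural one. You identify the target group $H\le S_{12}$ as the signed permutation group with even sign changes, compute $|H|=2^5\cdot 6!=23040=|M|$, check on generators (via Proposition~\ref{P410}) that $\Phi(M)\subseteq H$, and then exhibit enough elements in $\Phi(M)$ to generate $H$: the unbarred adjacent transpositions give $S_6$, and the $S_6$-conjugates of $\Phi(s_{1'})$ composed with the appropriate transpositions give all double sign changes. This is exactly the standard $W(D_6)=(\mathbb{Z}/2)^5\rtimes S_6$ generation, and the equality of orders then forces $\ker\Phi=\{I_7\}$. The caveat you flag about right-action conventions is real but harmless here, since conjugation in $S_{12}$ behaves the same way regardless of the side convention.

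Your alternative route to faithfulness via the normal core of $G_L$ is also valid, and the key claim (every nontrivial normal subgroup of $W(D_6)$ contains $w_0$) follows from your parenthetical: if $N\trianglelefteq M$ meets $(\mathbb{Z}/2)^5$ trivially then $[N,(\mathbb{Z}/2)^5]\subseteq N\cap(\mathbb{Z}/2)^5=\{1\}$, so $N$ lies in the centralizer of $(\mathbb{Z}/2)^5$, which is $(\mathbb{Z}/2)^5$ itself---contradiction; otherwise $N\cap(\mathbb{Z}/2)^5$ is a nonzero $S_6$-submodule and hence contains the unique minimal one $\{I_7,w_0\}$. Combined with $w_0\notin G_L$ (since $W(D_5)$, with $5$ odd, has trivial center), this gives triviality of the core. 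This second argument is more delicate and less self-contained than your first, so if you keep only one, keep the counting argument.
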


The two main results concerning the 
orbits of the action by right multiplication of the
group $M$ on the sets 
$S(K,L^2)$
and
$S(L,K^2)$
will be given in Proposition \ref{P430} and Proposition \ref{P440},
respectively. Before we state and prove those 
propositions, we
need the following lemma:

\begin{Lemma}
\label{L410}
Let $G$ be a finite group acting on the finite sets $A$ and $B$. Let
$\{\alpha_1, \alpha_2, \ldots, \alpha_n\}$ be 
a complete set of orbit representatives of
$G$ acting on $A$. 
Let $G_{\alpha_1}, G_{\alpha_2}, \ldots, G_{\alpha_n}$ be the 
corresponding stabilizers of $\alpha_1, \alpha_2, \ldots, \alpha_n$
in $G$. For each $i$ such that $1 \leq i \leq n$, let 
$\{\beta_{i,1}, \beta_{i,2}, \ldots, \beta_{i,m_i}\}$ 
be a complete set of orbit representatives
of $G_{\alpha_i}$ acting on $B$. Then there are
$\sum_{i=1}^n m_i$ orbits of the 
action of $G$ on $A \times B$ and
a complete set of orbit representatives is given by
$I=\cup_{i=1}^n \cup_{j=1}^{m_i} \{(\alpha_i,\beta_{i,j})\}$.
Furthermore, for any $(\alpha_i,\beta_{i,j}) \in I$,
the length of the orbit containing $(\alpha_i,\beta_{i,j})$
under the action of $G$ on $A \times B$
is equal to the length of the orbit containing $\alpha_i$ under
the action of $G$ on $A$ multiplied by the length of the orbit
containing $\beta_{i,j}$ under the action of $G_{\alpha_i}$
on $B$.
\end{Lemma}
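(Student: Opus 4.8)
The plan is to prove this lemma by a direct orbit-counting argument, reducing the action of $G$ on $A \times B$ to the action of each stabilizer $G_{\alpha_i}$ on $B$. First I would observe that, since the $\alpha_i$ form a complete set of orbit representatives for $G$ on $A$, every element of $A \times B$ can be carried by some $g \in G$ into an element whose first coordinate is one of the $\alpha_i$; more precisely, given $(\alpha, \beta) \in A \times B$, pick $g$ with $\alpha g = \alpha_i$, and then $(\alpha, \beta)g = (\alpha_i, \beta g)$. So every $G$-orbit on $A \times B$ meets the ``slice'' $\{\alpha_i\} \times B$ for a unique index $i$ (uniqueness because distinct $\alpha_i$ lie in distinct $G$-orbits on $A$). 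This reduces the problem to understanding, for fixed $i$, which elements of $\{\alpha_i\} \times B$ lie in the same $G$-orbit.

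The key step is the claim that $(\alpha_i, \beta)$ and $(\alpha_i, \beta')$ lie in the same $G$-orbit if and only if $\beta$ and $\beta'$ lie in the same $G_{\alpha_i}$-orbit on $B$. The ``if'' direction is immediate: if $\beta g = \beta'$ with $g \in G_{\alpha_i}$, then $(\alpha_i, \beta)g = (\alpha_i g, \beta g) = (\alpha_i, \beta')$. For the ``only if'' direction, suppose $(\alpha_i, \beta) g = (\alpha_i, \beta')$ for some $g \in G$; comparing first coordinates gives $\alpha_i g = \alpha_i$, so $g \in G_{\alpha_i}$, and then the second coordinate gives $\beta g = \beta'$, exhibiting $\beta, \beta'$ in the same $G_{\alpha_i}$-orbit. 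Combining this with the previous paragraph: the $G$-orbits on $A \times B$ are in bijection with pairs $(i, \mathcal{O})$ where $1 \le i \le n$ and $\mathcal{O}$ is a $G_{\alpha_i}$-orbit on $B$. Since $\{\beta_{i,1}, \ldots, \beta_{i,m_i}\}$ is a complete set of representatives for the $G_{\alpha_i}$-orbits on $B$, there are exactly $\sum_{i=1}^n m_i$ orbits, and $I = \bigcup_{i=1}^n \bigcup_{j=1}^{m_i} \{(\alpha_i, \beta_{i,j})\}$ is a complete (and irredundant) set of representatives.

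For the orbit-length statement, I would use the orbit–stabilizer theorem. The stabilizer in $G$ of $(\alpha_i, \beta_{i,j})$ is exactly $\{g \in G : \alpha_i g = \alpha_i \text{ and } \beta_{i,j} g = \beta_{i,j}\} = (G_{\alpha_i})_{\beta_{i,j}}$, the stabilizer of $\beta_{i,j}$ inside the subgroup $G_{\alpha_i}$. Hence the length of the $G$-orbit of $(\alpha_i, \beta_{i,j})$ is $|G| / |(G_{\alpha_i})_{\beta_{i,j}}|$. Writing this as $\bigl(|G| / |G_{\alpha_i}|\bigr) \cdot \bigl(|G_{\alpha_i}| / |(G_{\alpha_i})_{\beta_{i,j}}|\bigr)$ and again invoking orbit–stabilizer (for $G$ on $A$, and for $G_{\alpha_i}$ on $B$), this is precisely the length of the $G$-orbit of $\alpha_i$ times the length of the $G_{\alpha_i}$-orbit of $\beta_{i,j}$, as required.

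I do not anticipate a serious obstacle here — the argument is a standard ``fibered'' orbit decomposition. The only point requiring mild care is the bookkeeping showing that the set $I$ is irredundant (no two of the listed pairs are $G$-equivalent): this follows because pairs with different first index $i$ lie over different $G$-orbits on $A$, and pairs with the same $i$ but different $j$ have $\beta_{i,j}$ in different $G_{\alpha_i}$-orbits, which by the key step above prevents $G$-equivalence. Once that is noted, the count $\sum_{i=1}^n m_i$ and the length formula both drop out cleanly.
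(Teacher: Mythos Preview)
Your proposal is correct and follows essentially the same approach as the paper: reduce every $G$-orbit on $A\times B$ to a slice $\{\alpha_i\}\times B$, observe that $G$-equivalence within a slice is exactly $G_{\alpha_i}$-equivalence on $B$, and then use the tower $G_{(\alpha_i,\beta_{i,j})}\le G_{\alpha_i}\le G$ together with orbit--stabilizer for the length formula. The only cosmetic difference is that the paper phrases the irredundancy step as a direct contradiction argument on two elements of $I$, whereas you package it via the ``key step'' bijection, but the content is identical.
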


\begin{proof}
Take any $(\alpha,\beta) \in A \times B$. Then there exist
$i \in \{1,2,\ldots,n\}$ and $g \in G$ such that
$\alpha g = \alpha_i$. Furthermore, there exist
$j \in \{1,2,\ldots,m_i\}$ and $h \in G_{\alpha_i}$
such that $(\beta g) h = \beta_{i,j}$. Hence
$(\alpha,\beta) gh = (\alpha_i,\beta_{i,j})$ and so any element
of $A \times B$ is in an orbit whose representative is an
element of $I$.

Next suppose that for some 
$(\alpha_i,\beta_{i,j}), (\alpha_s,\beta_{s,t}) \in I$, there exists
$g \in G$ such that 
$(\alpha_i,\beta_{i,j})g= (\alpha_s,\beta_{s,t})$.
Then $\alpha_i g = \alpha_s$ and, 
since $\alpha_1, \alpha_2, \ldots, \alpha_n$
are representatives of different orbits of $G$ acting
on $A$, we must have $\alpha_i=\alpha_s$
and $g \in G_{\alpha_i}$.
Hence, since  
$\beta_{i,j} g = \beta_{s,t}$, we have
$\beta_{i,j} g = \beta_{i,t}$. But 
$\beta_{i,1}, \beta_{i,2}, \ldots, \beta_{i,m_i}$
are representatives of different orbits of
$G_{\alpha_i}$ acting on $B$,  so we must have
$\beta_{i,j} = \beta_{i,t}$. Therefore 
$(\alpha_i,\beta_{i,j}) = (\alpha_s,\beta_{s,t})$, which shows that
every two elements of $I$
are representatives of different orbits. We conclude that
$I$ is a complete set of orbit representatives as claimed. From this it also follows that there are $\sum_{i=1}^n m_i$ orbits of the 
action of $G$ on $A \times B$.

Finally, take any $(\alpha_i,\beta_{i,j}) \in I$. Let
$G_{(\alpha_i,\beta_{i,j})}$ be the stabilizer
of $(\alpha_i,\beta_{i,j})$ in $G$. 
By the Orbit-Stabilizer Theorem, the length of the orbit
containing $(\alpha_i,\beta_{i,j})$
under the action of $G$ on $A \times B$
is equal to the index of $G_{(\alpha_i,\beta_{i,j})}$
in $G$. Now since
$G_{(\alpha_i,\beta_{i,j})} \leq G_{\alpha_i} \leq G$,
the index of $G_{(\alpha_i,\beta_{i,j})}$
in $G$ is equal to the index of $G_{\alpha_i}$
in $G$ multiplied by the index of $G_{(\alpha_i,\beta_{i,j})}$
in $G_{\alpha_i}$. The index of $G_{\alpha_i}$
in $G$ is equal to the length of the orbit 
containing $\alpha_i$ under
the action of $G$ on $A$. Furthermore, 
$G_{(\alpha_i,\beta_{i,j})}$ is also the stabilizer 
in $G_{\alpha_i}$ of $\beta_{i,j}$ 
under the action of $G_{\alpha_i}$ on $B$,
and hence
the index of $G_{(\alpha_i,\beta_{i,j})}$
in $G_{\alpha_i}$ is equal to the length of the orbit
containing $\beta_{i,j}$ under the action of $G_{\alpha_i}$
on $B$. This completes the proof of the lemma.
\end{proof}

\begin{Proposition}
\label{P430}
There are four orbits of the action of $M$ on 
$ S(K,L^2)$.
Representatives and lengths of the orbits are given by

Orbit $1$: Representative $(p_0,\{6,5\})$, length$=480$,

Orbit $2$: Representative $(p_0,\{\overline{6},5\})$, length$=960$,

Orbit $3$: Representative $(p_0,\{6,\overline{6}\})$, length$=192$, and

Orbit $4$: Representative $(p_0,\{\overline{1},\overline{2}\})$, length$=480$.
\end{Proposition}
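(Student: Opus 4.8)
The plan is to apply Lemma \ref{L410} with $G = M$ acting on $A = G_K\backslash M$ (twelve... no, thirty-two elements) and on $B = S(L^2)$, the $\binom{12}{2}=66$-element set of two-element subsets of $G_L\backslash M$. Since $M$ acts transitively on $G_K\backslash M$ by right multiplication, there is a single orbit on $A$, so we may take $n=1$ and $\alpha_1 = p_0$; then by Lemma \ref{L410} the number of $M$-orbits on $S(K,L^2)$ equals the number of orbits of the stabilizer $G_{p_0}$ acting on $S(L^2)$, and the orbit lengths are $|p_0^M|$ times the corresponding $G_{p_0}$-orbit lengths on $S(L^2)$. Since $|p_0^M| = |G_K\backslash M| = 32$, the four asserted lengths $480, 960, 192, 480$ should arise from $G_{p_0}$-orbits on $S(L^2)$ of sizes $15, 30, 6, 15$ (which indeed sum to $66$), so the crux is to identify the stabilizer $G_{p_0}$ and compute its orbits on pairs from $\{1,\ldots,6,\overline1,\ldots,\overline6\}$.

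First I would pin down $G_{p_0}$. The coset $p_0$ is the identity coset $G_K$ (by \eqref{pkdef} with $q=r=0$ the transformation is the identity on $V$), so the stabilizer of $p_0$ under right multiplication on $G_K\backslash M$ is exactly $G_K$ itself. Thus the problem reduces to computing the orbits of $G_K \cong S_6$ on $S(L^2)$. Next I would make the $G_K$-action on $G_L\backslash M$ explicit: using the permutation representation $\Phi: M\to S_{12}$ of Proposition \ref{P410}, the generators $s_1,\ldots,s_5$ of $G_K$ act on $\{1,\ldots,6,\overline1,\ldots,\overline6\}$ as the transpositions $(1\,2),(2\,3),(3\,4),(4\,5),(5\,6)$ (each fixing all bars and preserving bar-status), so $G_K$ acts as the full symmetric group $S_6$ on the indices $\{1,\ldots,6\}$, acting diagonally on barred and unbarred labels and preserving the bar. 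In other words, $G_L\backslash M$ is, as a $G_K$-set, two disjoint copies of the natural $S_6$-set, permuted in lockstep.

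With this description, classifying $G_K$-orbits on unordered pairs $\{i,j\}$ from the twelve labels is a short combinatorial exercise. A pair is determined up to $G_K$-action by (i) whether the two underlying indices in $\{1,\ldots,6\}$ are equal or distinct, and (ii) the multiset of bar-statuses. This yields exactly four types: $\{i,j\}$ with $i\ne j$ both unbarred (e.g. $\{6,5\}$, orbit size $\binom{6}{2}=15$); $\{i,\overline j\}$ with $i\ne j$ (e.g. $\{\overline 6,5\}$, orbit size $6\cdot 5 = 30$); $\{i,\overline i\}$, the ``antipodal'' pairs (e.g. $\{6,\overline 6\}$, orbit size $6$); and $\{\overline i,\overline j\}$ with $i\ne j$ both barred (e.g. $\{\overline 1,\overline 2\}$, orbit size $15$). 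Multiplying each by $|p_0^M| = 32$ gives $480, 960, 192, 480$, matching the four orbits in the statement, and the listed representatives $(p_0,\{6,5\})$, $(p_0,\{\overline 6,5\})$, $(p_0,\{6,\overline 6\})$, $(p_0,\{\overline 1,\overline 2\})$ are exactly the paired-up representatives supplied by Lemma \ref{L410}. I would verify the total $480+960+192+480 = 2112 = 32\cdot 66$ as a consistency check.

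The main obstacle is purely bookkeeping: correctly translating the abstract right-multiplication action of $M$ on $G_L\backslash M$ into the concrete $S_6$-action on the twelve labels via $\Phi$, and in particular confirming that $G_K$ really does act as the \emph{full} $S_6$ on $\{1,\ldots,6\}$ while fixing the bar-status of each label (so that $\{i,\overline i\}$-type pairs form their own orbit and are not merged with the others). Once that is established, there is no genuine difficulty; the orbit-length computation is immediate from the Orbit–Stabilizer part of Lemma \ref{L410}. No delicate estimates or convergence issues arise here, since this proposition is entirely group-theoretic and does not touch the hypergeometric series themselves.
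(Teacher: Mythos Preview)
Your proposal is correct and follows essentially the same route as the paper's proof: both apply Lemma~\ref{L410} with $A=G_K\backslash M$ (transitive, stabilizer of $p_0$ equal to $G_K\cong S_6$) and $B=S(L^2)$, then use the permutation representation of Proposition~\ref{P410} to see that $G_K$ acts as the full $S_6$ on the indices $\{1,\ldots,6\}$ while preserving bars, yielding the four orbits of sizes $15,30,6,15$ and hence $480,960,192,480$ after multiplying by $32$. Your identification of the stabilizer via $p_0=G_K$ and your consistency check $32\cdot 66=2112$ are nice touches not spelled out in the paper, but the argument is otherwise the same.
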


\begin{proof}
The group $M$ acts transitively on the $32$-element set
$G_K \backslash M$.
The stabilizer $M_{p_0}$ of the element
$p_0 \in G_K \backslash M$ is generated by
$s_1,s_2,s_3,s_4,s_5$ and is isomorphic to $S_6$. 
By Lemma \ref{L410}, we need to consider the action of $M_{p_0}$ on
$S(L^2)$. Since no orbit of this action contains both an $L$-coherent element of $S(L^2)$ and an $L$-incoherent element, and since, by
Proposition \ref{P410}, every element of $M_{p_0}$ simply
permutes the elements of $\{1,\ldots,6\}$, 
there are four orbits of the action of $M_{p_0}$
on 
$S(L^2)$, and they are given by
\begin{eqnarray*}
&&\{\{i,j\}:i,j \in \{1,\ldots,6\}, i \neq j\}, \\
&&\{\{i,j\}:i \in \{\overline{1},\ldots,\overline{6}\}, j \in \{1,\ldots,6\}, \overline{j} \neq i\}, \\
&&\{\{i,\overline{i}\}:i \in \{1,\ldots,6\}\}, \mbox{ and}\\
&&\{\{i,j\}:i,j \in \{\overline{1},\ldots,\overline{6}\}, i \neq j\}. 
\end{eqnarray*}
Representatives for the above orbits are
$\{6,5\}, \{\overline{6},5\}, \{6,\overline{6}\}$, and $\{\overline{1},\overline{2}\}$,
respectively.
A simple counting argument shows that the lengths of
those orbits are
${(6 \times 5)}/{2} = 15, 6 \times 5 = 30, 6$, and $({6 \times 5})/{2} = 15$,
respectively.

Further, by Lemma \ref{L410}, there are four orbits of the action of $M$ on  
$S(K,L^2),$
with representatives being 
$(p_0,\{6,5\}),(p_0,\{\overline{6},5\}),(p_0,\{6,\overline{6}\})$,
and $(p_0,\{\overline{1},\overline{2}\})$.
The lengths of
the orbits are 
$32 \times 15 = 480, 32 \times 30 = 960,32 \times 6 = 192$, and $32 \times 15 = 480$,
respectively.
\end{proof}

\begin{Proposition}
\label{P440}
There are seven orbits of the action of $M$ on 
$ S(L,K^2)$.
Representatives and lengths of the orbits are given by

Orbit $1$: Representative $(3,\{p_0,p_1\})$, length$=960$,

Orbit $2$: Representative $(\overline{6},\{p_0,p_1\})$, length$=960$,

Orbit $3$: Representative $(\overline{1},\{p_0,p_1\})$, length$=960$,

Orbit $4$: Representative $(4,\{p_0,n_4\})$, length$=480$, 

Orbit $5$: Representative $(\overline{4},\{p_0,n_4\})$, length$=480$,

Orbit $6$: Representative $(\overline6,\{p_0,n_4\})$, length$=1920$, and

Orbit $7$: Representative $(6,\{p_0,n_0\})$, length$=192$. 
\end{Proposition}

\begin{proof}
By the results in \cite{FGS}, there are three orbits of the action 
of $M$ on  
$S(K^2)$. Each of these orbits
consists of all  $\{\sigma,\tau\}\subset S(K^2)$ such that the Hamming distance $d(\sigma,\tau)$ equals
$2,4$, or $6$. Representatives of these three orbits are
$\{p_0,p_1\},\{p_0,n_4\}$, and $\{p_0,n_0\}$,
respectively.
The lengths of the three orbits are
$({32\times\binom{6}{2}})/{2}=240,
({32\times\binom{6}{4}})/2{2}=240$, 
and $16$, respectively.

The stabilizer  $M_{\{p_0,p_1\}}$ in $M$ of $\{p_0,p_1\}$
is generated by $s_{1'},s_1,s_3,s_4,s_5$ and  is
isomorphic to $S_4 \times S_2 \times S_2$. There are
three orbits of the action of $M_{\{p_0,p_1\}}$ on
$G_L \backslash M$,and the three orbits are given by
$\{3,4,5,6\},\{\overline{3},\overline{4},\overline{5},\overline{6}\},
\{1,\overline{1},2,\overline{2}\}$.

The stabilizer  $M_{\{p_0,n_4\}}$ in $M$ of $\{p_0,n_4\}$
is generated by $s_1,s_2,s_4,\lambda_1,\lambda_2$,
where
\begin{eqnarray*}
&&\lambda_1=s_4s_5s_4s_3s_4s_5s_4, \\
&&\lambda_2=s_1s_2s_1s_{1'}s_2s_1s_5s_4s_3s_2s_1s_{1'}s_2s_3s_4s_5s_4.
\end{eqnarray*}
If $\vec{x}=(a,b,c,d,e,f,g)^T \in V$, then
the actions of $\lambda_1$ and $\lambda_2$ 
on $\vec{x}$ are given by
\begin{equation*}
\lambda_1\vec{x}=
(a,b,e-d,e-c,e,1+a+b-g,1+a+b-f)^T
\end{equation*}
and
\begin{equation*}
\lambda_2\vec{x}=
(e-a,e-b,e-c,e-d,e,1+e-f,1+e-g)^T. 
\end{equation*}
The actions of $\lambda_1$ and $\lambda_2$
on $G_L \backslash M$ are given by
$\left( \begin{array}{cccccc}
1 & 2 & 3 & 4 & 5 & 6 \\
1 & 2 & 6 & 4 & 5 & 3
\end{array} \right)$
and
$\left( \begin{array}{cccccc}
1 & 2 & 3 & 4 & 5 & 6 \\
\overline{1} & \overline{2} & \overline{3} & 5 & 4 & \overline{6}
\end{array} \right)$, respectively.
The group $M_{\{p_0,n_4\}}$
is isomorphic to $S_4 \times S_2 \times S_2$.
There are three
orbits of the action of $M_{\{p_0,n_4\}}$ on
$G_L \backslash M$, and the three orbits are given by
$\{4,5\}, \{\overline{4},\overline{5}\},
\{1,\overline{1},2,\overline{2},3,\overline{3},6,\overline{6}\}$.

The stabilizer  $M_{\{p_0,n_0\}}$ in $M$ of $\{p_0,n_0\}$
is generated by $s_1,s_2,s_3,s_4,s_5,w_0$,
where $w_0$ is the central involution of $M$.
The group $M_{\{p_0,n_0\}}$
is isomorphic to $S_6 \times S_2$. The action
of $M_{\{p_0,n_0\}}$ on $G_L \backslash M$
is transitive and thus we have only one orbit.

An application of Lemma \ref{L410} now leads to the result
in Proposition \ref{P440}.
\end{proof}

\section{The eighteen three-term relations}

In this section and the next, we present our main results concerning three-term relations among the functions $K(\vec{x})$ and $L(\vec{x})$.

To this end, we make several definitions.  The first of these describes certain quantities in terms of which all coefficients in all three-term relations will ultimately be expressed.

\begin{Definition}  
\label{D510}

\begin{enumerate}\item[\rm(a)] For $a_1,a_2,\ldots,a_n\in \C$ and $n\in\mathbb{Z}^+$, we write  
$$S(a_1,a_2,\ldots,a_n)=\sin\pi{a_1}\sinpi{a_2}\cdots\sinpi{a_n}.$$
\item[\rm(b)]
For $\vec{x}\in V$, we define:
\begin{align*}A(\vec{x})=\,&\frac{1}{\Gamma(a)\Gamma(b)
\Gamma(c)\Gamma(d)}\quad\hbox{(also often denoted $A(a,b,c,d)$)}; \nr B_1(\vec{x}) =\,&{\pi^{-4}} {S(b,c)}\bigl[{S( b-a,c-a,d)  +S(e-a,f-a,g-a)}\bigr] ;\nr B_2(\vec{x})=\,&{\pi^{-4}}{S(f-a,g-a)}\\\times\,&\bigl[{S(b-a,c-a,d-a)+S(e-2a,f-a,g-a)}\bigr]; \nr
 B_3(\vec{x})=\,&\frac{1 }{\pi^4S(f-g)}    \biggl[\genfrac{} {}{0pt}{} {S(f,f-e,g-a,g-b,g-c,g-d) }{ -S(g,g-e,f-a,f-b,f-c,f-d)} \biggr];\nr  B_4(\vec{x}) =\,& {\pi^{-4}}{S(f-a,g-a)}\bigl[{S(b,c,d) +S(e-a,f,g)}\bigr] ;\nr B_5(\vec{x})=\,& {\pi^{-4}}{S(b,e-b)}\bigl[{S(b-a,c-a,d-a)+S(e-2a,f-a,g-a)}\bigr];  \nr C(\vec{x})= \,&\frac{S(e-a,f-a,g-a)}{\pi^8}\biggl[S(b,c,d)\biggl(\genfrac{} {}{0pt}{}{S(b-a,c-a,d-a)}{+S(e-2a,f-a,g-a)}\biggr)\\+\,&S(e,f-a,g-a)\biggl(\genfrac{} {}{0pt}{}{S(b,c,d)}{+S(e-a,f,g)}\biggr)\biggr]\\=\,&\pi^{-4}{S(e-a)}\bigl[S(b,c,d)B_2(\vec{x})+S(e,f-a,g-a)B_4(\vec{x})\bigr].\end{align*}
\item[\rm(c)]  We definite the {\it length} of any product of the above functions $S$, $A$, $B_k$ ($k\in\{1,2,3,4,5\}$), and $C$ to be the number of gamma functions  appearing in the denominator  of each summand of that product.  Here, we agree that any sine function in the numerator increases the length by two (since $\sin\pi a=\pi/(\Gamma(a)\Gamma(1-a)$), and that any sine function in the denominator  decreases the length by two.

\item[\rm(d)]  We define the {\it width} of any product of the above functions $S$, $A$, $B_k$ ($k\in\{1,2,3,4,5\}$), and $C$ to be the number of summands comprising that product.    \end{enumerate}
\end{Definition} 
  
In particular,   $S(x_1,x_2,\ldots,x_n)$ has length $2n$ and width $1$,  $A(\vec{x})$  has length $4$ and width $1$, each $B_k(\vec{x})$ ($k\in\{1,2,3,4,5\}$) has length $10$ and width $2$, and $C(\vec{x})$ has length $18$ and width $4$.

Note also that, while $B_3(\vec{x})$ (alone among the above coefficients) has a sine function in its denominator, the resulting singularities in $B_3(\vec{x})$ are removable (since the zeros of $\sin\pi(f-g)$ are also zeros of the numerator of $B_3(\vec{x})$).

Let us now define \begin{equation*}T=(G_L\backslash M) \cup (G_K\backslash M).\end{equation*}  (Note that $|T|=12+32=44$.)  We extend the notions of Hamming distance and Hamming type on $G_K\backslash M$, cf. Definitions \ref{hamming} and \ref{D320} above, to obtain notions of distance and type on $T$, as follows.

\begin{Definition}  
\label{D520} \begin{enumerate}\item[\rm(a)]
If $\sigma,\tau\in T$, then the  {\it distance} $d(\sigma,\tau)$ between $\sigma$ and $\tau$ is defined to be

\begin{enumerate}\item[\rm(i)]  $0$ if $\sigma=\tau$.

\item[\rm(ii)] $2$  if:  $\sigma,\tau\in G_K\backslash M$ and the Hamming distance  between $\sigma$ and $\tau$ is 2, {or}   (in the case where either $\sigma$ or $\tau$ belongs to $G_L\backslash M$) $\sigma$ and $\tau$ are not opposite. Here, two cosets in $T$ are said to be {\it opposite}  if some element of the former coset equals the central involution times some element of the latter.  Otherwise,  they are not opposite.

 \item[\rm(iii)]  $4$  if:  $\sigma,\tau\in G_K\backslash M$ and the Hamming distance  between $\sigma$ and $\tau$ is 4, {or}   (in the case where either $\sigma$ or $\tau$ belongs to $G_L\backslash M$) $\sigma$ and $\tau$ are  opposite. 

\item[\rm(iv)] $6$ if  $\sigma,\tau\in G_K\backslash M$ and the Hamming distance  between $\sigma$ and $\tau$ is 6.
 \end{enumerate}
\item[\rm(b)] By the {\it type} of  a three-element subset $\{\sigma,\tau,\mu\} $ of $T$, we  mean the symbol $abc$, where $a$, $b$, and $c$ are the integers $d(\sigma,\tau)$, $d(\sigma,\mu)$, and $d(\tau,\mu)$, written in weakly increasing order.  

\end{enumerate}
\end{Definition}

For example, consider the set $\{p_0,\overline{1},\overline{2}\},$ and refer to the characterizations of $G_K\backslash M$ and $G_L\backslash M$ given in Section 3 (see (\ref{pkdef}), (\ref{nkdef}), and (\ref{Lcosetsdef}))  above.  On the one hand, $\overline{1}$  and  $\overline{2}$ are not opposite because, as follows from Definition \ref{coherdef} above,  $i,j\in G_L\backslash M$ are opposite if and only if $\{i,j\}=\{k,\overline{k}\}$ for some $k\in\{1,2,3,4,5,6\}$.  On the other hand, $p_0$ is opposite  $\overline1$, because the transformation $\rho\in \overline1$ defined by
$$\rho\vec{x}= (1-a,e-a,f-a,g-a,  1+d-a,1+c-a,1+b-a )^T$$  equals $w_0\nu$, where $\nu\in p_0$ is defined by $$\nu\vec{x}= (a,1+a-e,1+a-f,1+a-g,1+a-d,1+a-c,1+a-b )^T.$$(It is clear from Definition \ref{Kdef} that $K(\vec{x})$ is invariant under $\nu$.)  
Similarly, $p_0$ is opposite $\overline{2}$.  So $\{p_0,\overline{1},\overline{2}\}$ has type $244$.
\begin{Remark}  
\label{R510}
$d(\sigma,\tau)$ defines a metric on $T$. Indeed, the only way integers $a,b,c\in\{2,4,6\}$ can fail to satisfy  $a\le b+c$ is when $a=6$ and  $b=c=2$.  But it follows from Definition \ref{D520}, and from Proposition 6.5(iv) of \cite{FGS}, that a set $\{\sigma,\tau,\mu\}\subset T$ must have type $222, 224, 244, 246$, or $444$.

It  is readily checked that $d(\sigma,\tau)$ is invariant under the action of the Coxeter group $M$:  $d(  \sigma  g,\tau g)=d(\sigma,\tau)$ for all $g\in M$ and $\sigma,\tau\in T$.
\end{Remark}

Let us now denote  by $J_\sigma$ , for $\sigma\in T$,  the $K$ or $L$ function associated with $\sigma$---that is, $J_\sigma(\vec{x})= L _\sigma (\vec{x})$ if $\sigma\in G_L\backslash M$, and $J_\sigma(\vec{x})= K_\sigma( \vec{x})$ if $\sigma\in G_K\backslash M$.  Let us also write
$$T^{(3)}=\{\hbox{\rm three-element subsets }\mathcal{S}\subset T\}.$$

 We have the following main theorem.

\begin{Theorem}\label{bigthm}
\begin{enumerate}
\item[\rm(a)] The action of $M$ by right multiplication (elementwise) on $T^{(3)}$ partitions this set  into eighteen orbits. More specifically, there are:

\begin{enumerate}\item[\rm(i)]  two orbits  each of whose elements $\mathcal{S}$ is a set containing three elements of $G_L\backslash M$; 

\item[\rm(ii)]  four  orbits each of whose elements $\mathcal{S}$ is a set  containing one element of $G_K\backslash M$ and two elements of $G_L\backslash M$; 

\item[\rm(iii)]  seven   orbits each of whose elements $\mathcal{S}$ is a set containing one element of $G_L\backslash M$ and two elements of $G_K\backslash M$; 

\item[\rm(iv)]   five orbits   each of whose elements $\mathcal{S}$ is a set containing three elements of  $G_K\backslash M$. 
\end{enumerate}

\item[\rm(b)] Let $\{\sigma,\tau,\mu\}\in T^{(3)}.$ Then there is a relation of the form
\begin{equation} \label{relngeneral}
\gamma_1 J_{\sigma} (\vec{x}) + \gamma_2 J_{\tau} (\vec{x}) + \gamma_3 J_{\mu} (\vec{x}) = 0,
\end{equation}
where each of the coefficients $ \gamma_1, \gamma_2, \gamma_3$ is a product of:
\begin{enumerate}\item[\rm(i)] at most  one function of the form $S(x)$, where $x$ is a coordinate of $g \vec{x}$ for some $g\in M$, and

\item[\rm(ii)] at most four functions of the form $A(g\vec{x})$ ($g\in M$), and 

\item[\rm (iii)] at most one function of the form $B_k(g\vec{x})$ ($k=1,2,3,4$, or $5$)  or $C(g\vec{x})$ ($g\in M$).
\end{enumerate} 
\item[\rm(c)] If $\{\sigma,\tau,\mu\} $ and $\{\sigma',\tau',\mu'\}   $ are in the same orbit under the action of $M$ on $T^{(3)}$ described above, then a three-term relation among
$J_{\sigma}$, $J_{\tau}$, and $J_{\mu}$ can be transformed into one among $J_{\sigma'}$, $J_{\tau'}$, and $J_{\mu'}$ by the
application of a single change of variable
$$\vec{x} \mapsto \rho \vec{x} \hspace{1in} (\rho \in M)$$
to all elements (including the coefficients) of the first relation.
 
\item[\rm(d)] For any $\ell \in \{1,2,3\}$, let $\{j,k\}=\{1,2,3\} \setminus \{\ell\}$.  Then, in a relation  \eqref{relngeneral}, each coefficient $\gamma_\ell$ has width $2^{d(\mu_j,\mu_k)/2-1}$.

\item[\rm(e)]   In any three-term relation  (\ref{relngeneral}), all coefficients have length
$$2(a+b+c)-6,$$where $abc$ is the type of $\{\sigma,\tau,\mu\}$.

\end{enumerate}

 \end{Theorem}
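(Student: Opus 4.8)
The plan is to leverage Theorem~\ref{bigthm}(a), (c), and (d) to reduce statement~(e) to a finite check, then carry out that check. By parts~(a) and~(c), every three-term relation is obtained from one of the eighteen "base" relations---one per orbit---by applying a single change of variable $\vec{x}\mapsto\rho\vec{x}$ ($\rho\in M$). Since all coefficients in a relation are products of the functions $S$, $A(\cdot)$, $B_k(\cdot)$, $C(\cdot)$ (part~(b)), and since a change of variable $\vec{x}\mapsto\rho\vec{x}$ merely replaces each argument $g\vec{x}$ by $(g\rho)\vec{x}$ without altering \emph{which} of the functions $S$, $A$, $B_k$, $C$ appears, the length of each coefficient (as defined in Definition~\ref{D510}(c)) is unchanged under the action of $M$. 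By Remark~\ref{R510}, the type $abc$ of $\{\sigma,\tau,\mu\}$ is also an $M$-invariant. Hence it suffices to verify, for one explicit representative relation in each of the eighteen orbits, that every coefficient has length $2(a+b+c)-6$, where $abc$ is the type of the representative triple.

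First I would tabulate, using the length data recorded immediately after Definition~\ref{D510} ($S(x_1,\ldots,x_n)$ has length $2n$; $A$ has length $4$; each $B_k$ has length $10$; $C$ has length $18$; and lengths add under products, with sines in numerator/denominator contributing $\pm2$), the length of each coefficient in each of the eighteen explicit relations presented in this section and in Section~6. Concurrently I would read off the type $abc$ of the corresponding triple $\{\sigma,\tau,\mu\}$ from Definition~\ref{D520}, using the opposite/non-opposite criterion spelled out in the worked example following that definition together with Proposition~6.5(iv) of~\cite{FGS} for the $K$-only distances. The claim is then the single arithmetic identity $\text{(observed length)} = 2(a+b+c)-6$ in each of the eighteen cases; equivalently, grouping by the five types, a coefficient in a relation of type $222$ has length $6$, type $224$ has length $8$, type $244$ has length $10$, type $246$ has length $12$, and type $444$ has length $14$.

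The main obstacle---really a bookkeeping obstacle rather than a conceptual one---is that part~(e) asserts \emph{all} coefficients in a given relation have the \emph{same} length, whereas a priori the three coefficients $\gamma_1,\gamma_2,\gamma_3$ are built from different combinations of $S$, $A$, $B_k$, $C$ (indeed part~(d) shows their widths differ whenever the three pairwise distances are not all equal). So the verification must confirm that the varying numbers of $A$-factors and $S$-factors, together with the choice of a $B_k$ or $C$ factor (or neither), always conspire to produce a common length. This is where the structure of the coefficients matters: a coefficient multiplying $J_{\mu_\ell}$ "sees" the two opposite functions $J_{\mu_j}, J_{\mu_k}$, and its complexity is controlled by $d(\mu_j,\mu_k)$ via part~(d); the length, by contrast, is a \emph{global} feature of the relation, reflecting the total "size" of the triple, namely $a+b+c$. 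I would make this transparent by noting that across a relation the trade-off is exactly compensatory---e.g. replacing a $C$-factor (length $18$, width $4$) in one coefficient by a $B_k$-factor (length $10$, width $2$) in another is offset by four extra $A$-factors (length $4$ each) or by additional $S$-factors---so that the total length is pinned to the type. Once the eighteen-case table is assembled and each row checked against $2(a+b+c)-6$, and once the $M$-invariance of both sides is invoked to propagate the result to all of $T^{(3)}$, the proof is complete.

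A final remark: parts~(d) and~(e) together give, for each coefficient in a relation, both its width ($2^{d(\mu_j,\mu_k)/2-1}$) and its length ($2(a+b+c)-6$), and one can cross-check consistency---for instance in a type~$222$ relation every $d(\mu_j,\mu_k)=2$, so each coefficient has width $1$ and length $6$, matching $\gamma_\ell$ being a single function $S(x_1,x_2,x_3)$ (length $6$, width $1$) or a product $A(g\vec{x})\,S(x)$ (length $6$, width $1$); in a type~$444$ relation the $C$-factor coefficient has width $4=2^{4/2-1}\cdot 2$---wait, one checks $2^{d/2-1}=2^{1}=2$ for $d=4$, so a width-$4$ coefficient would require $d=6$, consistent with the fact that in a type $444$ relation all pairwise distances are $4$ and no coefficient is a bare $C$; rather each is of the form $B_k(g\vec{x})$ times $A$-factors, yielding width $2$ and length $14$. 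These sanity checks, performed on the eighteen representatives, simultaneously confirm parts~(d) and~(e) and provide independent corroboration of the explicit relations.
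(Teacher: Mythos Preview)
Your overall strategy is exactly the paper's: part~(a) is already established in Sections~3--4, and parts~(b)--(e) are proved by exhibiting one explicit relation per orbit and verifying the length and width claims by inspection, then propagating via the $M$-action (which preserves both the type $abc$ and the structure of each coefficient). So the reduction to eighteen cases is correct and matches the paper.

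However, your arithmetic for the target lengths is wrong in four of the five types. The formula is $2(a+b+c)-6$, so the correct values are
\[
222\mapsto 6,\quad 224\mapsto 10,\quad 244\mapsto 14,\quad 246\mapsto 18,\quad 444\mapsto 18,
\]
not $6,8,10,12,14$ as you wrote. Consequently your ``sanity checks'' in the final paragraph are off: in a type~$444$ relation each coefficient is a product of two $A$-factors (length~$4$ each) and one $B_5$-factor (length~$10$), giving length $4+4+10=18$, not~$14$; in a type~$224$ relation the width-$1$ coefficients are of the form $S(x)\cdot A(\cdot)\cdot A(\cdot)$ (length $2+4+4=10$) and the width-$2$ coefficient is a single $B_k$ (length~$10$), not length~$8$. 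Your compensatory trade-off remark (a $C$ versus a $B_k$ being offset by extra $A$-factors) is qualitatively right, but with the wrong target numbers the verification would fail. Fix the arithmetic and carry out the eighteen-case table with the correct values; the paper does exactly this, announcing the target length at the head of each type-subsection and then displaying the relations so that the check is immediate.
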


Part (a) of the above theorem has already been demonstrated---see Sections 3 and 4 above.  To prove the other parts of the theorem, it will suffice to produce, for each of the eighteen orbits described above, a relation  (\ref{relngeneral}) for one particular representative $\{\sigma,\tau,\mu\}$ of that orbit, such that each of the eighteen relations produced satisfies the conclusions of parts (d) and (e) of the theorem.

We will present the required relations in Propositions \ref{firstprop}--\ref{lastprop} below.  For the proofs of these, we will first need the following trigonometric identities.

\begin{Lemma} \label{triglemma}  \begin{enumerate}\item[\rm (a)] For $p,q,r,s\in\mathbb{C}$, we have
 $$S( p-q,r-s)-S(p-r,q-s)=-S(q-r,p-s).$$

\item[\rm (b)]     For $(a,b,c,d,e,f,g)^T\in V$, we have
\begin{align*} & S(g-a,f-b,f-c,f-d)-S(f-a,g-b,g-c,g-d)  \\=\,& S(f-g)\bigl[{S(b-a,c-a,d-a)+S(e-2a,f-a,g-a)}\bigr].\end{align*}
\end{enumerate}\end{Lemma}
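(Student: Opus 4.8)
The plan is to prove the two trigonometric identities in Lemma~\ref{triglemma} by elementary manipulations of products of sines, reducing everything to the basic product-to-sum formula $2\sinpi{u}\sinpi{v} = \cos\pi(u-v) - \cos\pi(u+v)$ (equivalently, working with the identity $\sinpi{(x+y)} = \sinpi{x}\cos\pi y + \cos\pi x\sinpi{y}$ applied repeatedly). Part~(a) is purely formal: first I would observe that the claimed identity
$$S(p-q,r-s) - S(p-r,q-s) + S(q-r,p-s) = 0$$
is homogeneous under translating all four of $p,q,r,s$ by a common constant, so without loss of generality I may set $s=0$, reducing to the three-variable identity $\sinpi{(p-q)}\sinpi r - \sinpi{(p-r)}\sinpi q + \sinpi{(q-r)}\sinpi p = 0$. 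This last identity I would verify by expanding each of $\sinpi{(p-q)}$, $\sinpi{(p-r)}$, $\sinpi{(q-r)}$ via the angle-subtraction formula and collecting terms; alternatively it is the imaginary part of the telescoping complex identity $e^{i\pi(p-q)}\sin\pi r - e^{i\pi(p-r)}\sin\pi q + e^{i\pi(q-r)}\sin\pi p$, which vanishes after writing each sine as $(e^{i\pi(\cdot)}-e^{-i\pi(\cdot)})/2i$ and cancelling. Either route is a short computation with no genuine obstacle.

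For part~(b), the strategy is to apply part~(a) cleverly rather than expand the four-fold products directly. I would first factor out the common structure: write the left-hand side as
$$S(f-a)\,S(f-b,f-c,f-d)\cdot\frac{S(g-a)}{S(f-a)} - S(f-a)\,S(g-b,g-c,g-d),$$
but more usefully I would use the Saalsch\"utzian relation $e+f+g-a-b-c-d=1$ valid on $V$ to rewrite the exponents. The key is that the factor $S(f-g)$ on the right suggests pulling a difference-of-two-products apart via part~(a). Concretely, I would group the left-hand side as a telescoping sum by successively swapping one argument at a time between the ``$f$-block'' and the ``$g$-block'': for instance, using part~(a) with a suitable choice of $p,q,r,s$ to write $S(g-a,f-b) - S(f-a,g-b) = -S(f-g)\,S(\ast)$ (here $(g-a)-(f-b) $ and $(f-a)-(g-b)$ differ, and the relevant difference $q-r$ produces exactly $f-g$), then carry the remaining common factors $S(f-c,f-d)$ or $S(g-c,g-d)$ along. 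Iterating this swap across the arguments $b,c,d$, each step contributes a factor of $\pm S(f-g)$ times a leftover product of sines, and after collecting the two surviving terms one should land on precisely $S(f-g)[S(b-a,c-a,d-a) + S(e-2a,f-a,g-a)]$, where the appearance of $e-2a$ comes from substituting $e = 1+a+b+c+d-f-g$ into one of the leftover sine arguments.

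The main obstacle I anticipate is bookkeeping in part~(b): getting the telescoping to close up with the correct signs and verifying that the ``leftover'' sine products, after using the $V$-constraint to eliminate $e$ (or $f$, or $g$), genuinely combine into the two-term bracket on the right rather than some other combination. In particular the term $S(e-2a,f-a,g-a)$ has a $-2a$ that is not visibly symmetric with the $-a$ shifts in $S(b-a,c-a,d-a)$, so I would need to track exactly where a second subtraction of $a$ enters --- this should happen when, after a swap, an argument of the form $g-b$ meets the constraint and becomes $1+a+c+d-f$, and a further simplification against another argument produces the doubled shift. A clean way to sidestep most of the bookkeeping is to set $s=0$ (i.e.\ translate so that $a=0$, which is legitimate since both sides are invariant under $(a,b,c,d,e,f,g)\mapsto(a+t,b+t,c+t,d+t,e+2t,f+t,g+t)$, a translation preserving $V$), reducing part~(b) to the identity
$$S(g,f-b,f-c,f-d) - S(f,g-b,g-c,g-d) = S(f-g)\bigl[S(b,c,d) + S(e,f,g)\bigr]$$
with $e+f+g = 1+b+c+d$, which is a three-swap telescoping application of part~(a) and is much easier to check directly; then I would restore generality by translating back.
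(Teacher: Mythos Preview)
Your treatment of part~(a) is essentially the paper's: both expand $S(p-q,r-s)-S(p-r,q-s)+S(q-r,p-s)$ into sines and cosines of the individual arguments and observe pairwise cancellation. Your preliminary translation to $s=0$ is a harmless cosmetic step the paper omits.

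For part~(b) you and the paper share the same mechanism --- repeated use of part~(a) together with the Saalsch\"utzian constraint --- but differ in organization. The paper works directly, with no translation: it first applies part~(a) twice to rewrite the right-hand bracket as $S(b-a,g-c,g-d)+S(a+b-e,b-f,g-a)$, then applies part~(a) twice more to collapse the full expression to zero. Your translation $(a,b,c,d,e,f,g)\mapsto(a+t,b+t,c+t,d+t,e+2t,f+t,g+t)$ to reduce to $a=0$ is valid (every sine argument in the identity is invariant, and the map preserves $V$) and does lighten the notation; this is a pleasant simplification the paper does not use. One caution, however: your phrase ``carry the remaining common factors $S(f-c,f-d)$ or $S(g-c,g-d)$ along'' does not work as written, since those two factors are \emph{different} and cannot both be pulled out of the difference $S(g-a,f-b,f-c,f-d)-S(f-a,g-b,g-c,g-d)$. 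What actually works is a pairwise splitting such as
\[
\bigl[S(g-a,f-b)-S(f-a,g-b)\bigr]S(g-c,g-d)\;+\;S(g-a,f-b)\bigl[S(f-c,f-d)-S(g-c,g-d)\bigr],
\]
after which part~(a) handles each two-sine bracket separately and the constraint converts $f+g-c-d$ into $1+a+b-e$; one more application of part~(a) then matches the right-hand side. Carried out, this is the same four applications of part~(a) the paper uses, just in a slightly different order. So your plan is sound, but the ``three-swap telescoping'' picture should be replaced by this pairwise one.
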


\begin{proof}Part (a) may be proved directly, by expanding
 $$S( p-q,r-s)-S(p-r,q-s)+S(q-r,p-s)$$into sines and cosines of $p,q,r$, and $s$ alone, and observing that the resulting summands cancel pairwise.

To prove part (b) we note that, by part (a),  
\begin{align*}  &S( b-a,c-a,d-a)+S( e-2a,f-a,g-a) \\=\,& S( b-a)\bigl[S(g-c,g-d)-S(g-a,g+a-c-d)\bigr] \\+\,& S( g-a)\bigl[S(a+b-e,b-f)-S(b-a,c+d-a-g)\bigr]\\=\,& S( b-a,g-c,g-d)+S( a+b-e,b-f,g-a),\end{align*}so again by part (a),
\begin{align*} & S(g-a,f-b,f-c,f-d)-S(f-a,g-b,g-c,g-d) \\-\,&S(f-g)\bigl[{S(b-a,c-a,d-a)+S(e-2a,f-a,g-a)}\bigr]\\=\, & S(g-a,f-b)\bigl[S(f-c,f-d)+S(f-g,a+b-e)\bigr]\\-\,& S(g-c,g-d) \bigl[S(f-a,g-b) +S(f-g ,b-a)\bigr]\bigr]
\\=\,& S(g-a,f-b) S(g-c,g-d)- S(g-c,g-d) S(f-b,g-a) \\=\,&0.\end{align*} \end{proof}

\section{Explicit statement and derivation of the three-term relations}

We now proceed with the derivations of our relations.

\subsection{Type $222$ relations} 

There are four relations of Type $222$: the Orbit $1$ $(L,K,K)$ relation, the Orbit $1$ $(K,L,L)$ relation, the Hamming type $222$ $(K,K,K)$ relation, and the $L$-coherent $(L,L,L)$ relation.  In any such  relation, the coefficient of any $K$ or $L$ function has length $2(2+2+2)-6=6$.

\begin{Proposition}\label{firstprop}
We have the Orbit $1$ $(L,K,K)$ relation \begin{align}
\label{Orbit1KKL}
&S(a) A({b,1+b-e,1+b-f,1+b-g })\ser{K}{p_0} \nr-\,&S(b) A({a,1+a-e,1+a-f,1+a-g })\ser{K}{p_1}\nr
-\,&{S (b-a)  A(a,b,c,d)   L_{3}(\vec{x}) } 
=0.
\end{align}
\end{Proposition}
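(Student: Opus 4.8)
The plan is to establish \eqref{Orbit1KKL} by writing out each of the three terms explicitly as a linear combination of the normalized series ${}_4F_3^*$ appearing in \eqref{Kdef} and \eqref{Ldef}, and then verifying that, after clearing common factors, the six resulting ${}_4F_3^*$ contributions cancel in pairs. The crucial observation is that the two complementary ${}_4F_3(1)$ series defining $K_{p_0}(\vec{x})$ and $K_{p_1}(\vec{x})$, together with the two supplementary series defining $L_3(\vec{x})$, involve only \emph{four} distinct ${}_4F_3^*$ functions, not six: this is exactly the kind of coincidence that produces three-term relations in \cite{FGS} and \cite{M1}, and it should be checked here by applying the coset transformations \eqref{pkdef} (for $k=0,1$) and the definition of $L_3$ from \eqref{Lcosetsdef} to the arguments.

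First I would compute $K_{p_0}(\vec{x})$: since $p_0$ is the coset of the identity, $K_{p_0}(\vec{x})=K(a;b,c,d;e,f,g)$, so from \eqref{Kdef} it is $1/(S(a)\Gamma(a)^2\Gamma(b)\Gamma(c)\Gamma(d)\Gamma(1+a-e)\Gamma(1+a-f)\Gamma(1+a-g))$ times the sum of ${}_4F_3^*[a,b,c,d;e,f,g]$ and ${}_4F_3^*[a,1+a-e,1+a-f,1+a-g;1+a-b,1+a-c,1+a-d]$. For $K_{p_1}(\vec{x})$ I would apply \eqref{pkdef} with $k=1$ (so $q=0$, $r=1$), giving the transformation $\vec{x}\mapsto(1+a_1,1+b_1,1+c_1,1+d_1;1+e_0,1+f_0,1+g_0)^T=(b,1+b-e,1+b-f,1+b-g;\ldots)$ after using $R(a,b,c,d)=(b,c,d,a)$; then substitute into \eqref{Kdef} with first parameter $b$. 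For $L_3(\vec{x})$ I would use its definition in \eqref{Lcosetsdef} directly and expand via \eqref{Ldef}. Then I would verify: the first ${}_4F_3^*$ in $K_{p_0}$ matches (up to the normalizing gamma factors encoded in $A$ and $S$) the first ${}_4F_3^*$ in $L_3$; the second ${}_4F_3^*$ in $K_{p_1}$ matches the second ${}_4F_3^*$ in $L_3$; and the remaining two series (the second from $K_{p_0}$ and the first from $K_{p_1}$) are literally the same ${}_4F_3^*$ and cancel against each other once multiplied by their respective coefficients $S(a)A(b,1+b-e,1+b-f,1+b-g)$ and $-S(b)A(a,1+a-e,1+a-f,1+a-g)$.

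The bookkeeping step — matching the scalar prefactors — is where the trigonometric identity Lemma~\ref{triglemma}(a) enters: after collecting the coefficient of each surviving ${}_4F_3^*$, one is left with an identity of the shape $S(a)\cdot(\text{gammas})^{-1}-S(b)\cdot(\text{gammas})^{-1}=-S(b-a)\cdot(\text{gammas})^{-1}$, and the reflection formula $\sin\pi s=\pi/(\Gamma(s)\Gamma(1-s))$ converts the quotients of gamma functions into sines, at which point Lemma~\ref{triglemma}(a) with an appropriate choice of $p,q,r,s$ (essentially $p=a$, $q=b$, with $r,s$ chosen so the spurious terms vanish, or more directly the two-term instance $\sin\pi a-\sin\pi b$ rewritten) closes the gap. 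I would also record that lengths and widths come out as claimed: $S(a)$, $S(b)$, $S(b-a)$ each have length $2$ and width $1$; each $A(\cdot,\cdot,\cdot,\cdot)$ has length $4$ and width $1$; so each coefficient has length $6$ and width $1=2^{0}=2^{d/2-1}$ with $d=2$, consistent with Theorem~\ref{bigthm}(d)--(e) for type $222$.

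The main obstacle I anticipate is purely organizational rather than conceptual: correctly tracking the seven gamma factors in each of the three normalizations and confirming that the ${}_4F_3^*$ arguments genuinely coincide after the $R$-shift in \eqref{pkdef} and the supplementary swap in the definition of $L_3$. One has to be careful that the Saalschützian condition $e+f+g-a-b-c-d=1$ is used wherever needed (for instance to rewrite $2-e+\cdots$ type parameters), and that the signs — the $+$ between the two series in $K$ versus the $-$ between the two series in $L$ — are threaded through so that the cancellation is genuine and not merely formal. Once the four-series reduction is verified, the remaining identity is a one-line application of Lemma~\ref{triglemma}(a).
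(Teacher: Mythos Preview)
Your overall approach---expanding each of $K_{p_0}$, $K_{p_1}$, and $L_3$ via \eqref{Kdef} and \eqref{Ldef} into normalized ${}_4F_3^*$ series and cancelling---is exactly what the paper does. However, your bookkeeping of which series coincide is off, and this leads you to anticipate more work than is actually required.

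The correct pairing is: the \emph{first} ${}_4F_3^*$ in $K_{p_0}$ and the \emph{first} in $K_{p_1}$ are both ${}_4F_3^*[a,b,c,d;e,f,g]$ (up to a harmless permutation of numerator parameters), and these cancel directly once the outer factors $S(a)$ and $S(b)$ absorb the $S(a)$, $S(b)$ in the denominators of the respective $K$-definitions. What survives is the \emph{second} series from $K_{p_0}$, namely ${}_4F_3^*[a,1+a-e,1+a-f,1+a-g;1+a-b,1+a-c,1+a-d]$, minus the \emph{second} series from $K_{p_1}$, namely ${}_4F_3^*[b,1+b-e,1+b-f,1+b-g;1+b-a,1+b-c,1+b-d]$. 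These are precisely the two supplementary series defining $L_3(\vec{x})$, and the common gamma prefactor matches $S(b-a)A(a,b,c,d)$ times the $L_3$ normalization on the nose, using only $\sin\pi(1+a-b)=S(b-a)$. So there are three distinct ${}_4F_3^*$ series here, not four, and Lemma~\ref{triglemma}(a) plays no role: the identity drops out without any trigonometric manipulation. (Your computation of the $p_1$-transformation also slipped: \eqref{pkdef} with $k=1$, i.e.\ $q=0$, $r=1$, gives $\vec{x}\mapsto(b,c,d,a,e,f,g)^T$, not $(b,1+b-e,\ldots)^T$.)
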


\begin{proof} By the definitions of $\ser{K}{p_0}$ and $\ser{K}{p_1}$, the left hand side of (\ref{Orbit1KKL}) equals\begin{align*}&\frac{S(a)}{\gg{b}\gg{1+b-e}\gg{1+b-f}\gg{1+b-g}}\\\times\,&\frac{1}{S(a)\gg{a}\gg{b}\gg{c}\gg{d}\gg{a}\gg{1+a-e}\gg{1+a-f}\gg{1+a-g}}\nonumber\\\times \,&
\biggl({_4}F_3^* \biggl[\genfrac{} {}{0pt}{}{a,b,c,d;}{ e,f,g;}1\biggr] +{_4}F_3^* \biggl[\genfrac{} {}{0pt}{}{a,1+a-e,1+a-f,1+a-g;}{1+a-b,1+a-c,1+a-d;}1\biggr]\biggr)\\-\,&\frac{S(b)}{\gg{a}\gg{1+a-e}\gg{1+a-f}\gg{1+a-g}}\\\times\,&\frac{1}{S(b)\gg{a}\gg{b}\gg{c}\gg{d}\gg{b}\gg{1+b-e}\gg{1+b-f}\gg{1+b-g}}\nonumber\\\times \,&
\biggl({_4}F_3^* \biggl[\genfrac{} {}{0pt}{}{b,a,c,d;}{ e,f,g;}1\biggr] +{_4}F_3^* \biggl[\genfrac{} {}{0pt}{}{b,1+b-e,1+b-f,1+b-g;}{1+b-a,1+b-c,1+b-d;}1\biggr]\biggr)\\=\,&\frac{1}{\biggl[ \genfrac{} {}{0pt}{}{\ds\gg{a}\gg{b}\gg{c}\gg{d}\gg{a}\gg{1+a-e}\gg{1+a-f}}{ \ds\times\gg{1+a-g}\gg{b}\gg{1+b-e}\gg{1+b-f}\gg{1+b-g}}\biggr]}\nonumber\\\times \,&
\biggl({_4}F_3^* \biggl[\genfrac{} {}{0pt}{}{a,1+a-e,1+a-f,1+a-g;}{ 1+a-b,1+a-c,1+a-d;}1\biggr]\\-\,&{_4}F_3^* \biggl[\genfrac{} {}{0pt}{}{b,1+b-e,1+b-f,1+b-g;}{ 1+b-a,1+b-c,1+b-d;}1\biggr]\biggr) \\=\,&S(b-a)A(a,b,c,d)\ser{L}{3}.\end{align*}\end{proof}

\begin{Proposition}
We have the Orbit $1$ $(K,L,L)$ relation
\begin{align}\label{Orbit1KLL}
 &S(f-e)A({ 1 - a,e-a, f-a, g-a })  \ser{K}{p_0} \nr+\,& S(f-a)A({e - a, e - b, e - c, e - d })\ser{L}{6} \nr-\,&  S(e-a)A({f - a, f - b, f - c, f - d })\ser{L}{5}
=0. 
\end{align} 
\end{Proposition}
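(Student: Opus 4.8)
The plan is to mimic the proof of Proposition~\ref{firstprop}: expand both sides in terms of the starred series ${}_4F_3^*$, and show the identity reduces to a known two-term relation plus an application of Lemma~\ref{triglemma}(a). First I would substitute the definitions~(\ref{Kdef}) and~(\ref{Ldef}) into the three terms of~(\ref{Orbit1KLL}). Writing $K_{p_0}(\vec{x})$ via~(\ref{Kdef}) contributes a linear combination of ${}_4F_3^*\bigl[\genfrac{}{}{0pt}{}{a,b,c,d;}{e,f,g;}1\bigr]$ and the ``complementary'' series ${}_4F_3^*\bigl[\genfrac{}{}{0pt}{}{a,1+a-e,1+a-f,1+a-g;}{1+a-b,1+a-c,1+a-d;}1\bigr]$, while $L_6(\vec{x})$ via~(\ref{Ldef}) contributes ${}_4F_3^*\bigl[\genfrac{}{}{0pt}{}{a,b,c,d;}{e,f,g;}1\bigr]$ minus the ``supplementary in $e$'' series ${}_4F_3^*\bigl[\genfrac{}{}{0pt}{}{1+a-e,1+b-e,1+c-e,1+d-e;}{2-e,1+f-e,1+g-e;}1\bigr]$, and $L_5(\vec{x})$ contributes the analogous pair with $e$ and $f$ swapped. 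So four distinct starred series appear in total.

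The key observation is that the ${}_4F_3^*\bigl[\genfrac{}{}{0pt}{}{a,b,c,d;}{e,f,g;}1\bigr]$ terms coming from $L_6$ and $L_5$ should cancel against each other (after accounting for the gamma-function prefactors from $A(e-a,e-b,e-c,e-d)$ versus $A(f-a,f-b,f-c,f-d)$ and the $1/(\sin\pi e)$, $1/(\sin\pi f)$ factors inside the $L$'s), so that what remains is a three-term relation among the three remaining starred series: the complementary-in-$a$ series from $K_{p_0}$, the $e$-supplementary series from $L_6$, and the $f$-supplementary series from $L_5$. I would then recognize this as (a normalized form of) one of Thomae's classical two-term transformations for Saalsch\"utzian ${}_4F_3(1)$ series—indeed, the series in $K_{p_0}$ with numerator $a,1+a-e,1+a-f,1+a-g$ is exactly related to the series $1+a-e,1+b-e,1+c-e,1+d-e$ (up to the Saalsch\"utzian relabeling $e+f+g-a-b-c-d=1$) via such a transformation. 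The bookkeeping of the sine and gamma prefactors is where Lemma~\ref{triglemma}(a) enters: the coefficients $S(f-e)$, $S(f-a)$, $S(e-a)$ arise precisely from an identity of the shape $S(p-q,r-s)-S(p-r,q-s)=-S(q-r,p-s)$ applied to the difference of reciprocal-gamma normalizations.

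The main obstacle I anticipate is managing the gamma-function prefactors correctly: one must verify that after clearing the common denominator (a product of twelve or so gamma functions analogous to the one displayed in the proof of Proposition~\ref{firstprop}), the residual coefficients of the four starred series are exactly $\pm 1$ or $\pm S(\cdot)$ as required, and in particular that the two copies of ${}_4F_3^*\bigl[\genfrac{}{}{0pt}{}{a,b,c,d;}{e,f,g;}1\bigr]$ have opposite signs and equal magnitude. This is a careful but routine computation using $\sin\pi a\,\Gamma(a)\Gamma(1-a)=\pi$ to convert sines to gammas and back, together with the Saalsch\"utzian constraint to rewrite parameters such as $1+a-e$ in terms of $b+c+d-f-g$. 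Once the prefactors are reconciled, the identity among the three surviving ${}_4F_3^*$ series is a direct citation of a Thomae-type two-term relation (equivalently, it follows from the invariance results for $K$ and $L$ recorded in Section~3, since $L_6$, $L_5$, and the relevant rewriting of $K_{p_0}$ are all governed by cosets of $G_L$ and $G_K$ in $M$). I would close by confirming that every coefficient has width $2^{2/2-1}=2^0=1$ and length $2(2+2+2)-6=6$, consistent with parts (d) and (e) of Theorem~\ref{bigthm}, since each is a single product of one sine factor and four reciprocal-gamma factors.
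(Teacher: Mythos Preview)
Your direct-expansion strategy does not go through as described. When you substitute (\ref{Kdef}) and (\ref{Ldef}) into (\ref{Orbit1KLL}) and clear the common factor $\pi^{-4}A(a,b,c,d)$, the coefficient of the ``base'' series $F_1={}_4F_3^*\bigl[\genfrac{}{}{0pt}{}{a,b,c,d;}{e,f,g;}1\bigr]$ is
\[
\frac{S(e-a,f-a)}{S(e)S(f)}\Bigl[S(e,f,f-e,g-a)+S(f,e-b,e-c,e-d)-S(e,f-b,f-c,f-d)\Bigr].
\]
By Lemma~\ref{triglemma}(b) (applied with $f'\!=\!e$, $g'\!=\!f$, $a'\!=\!0$) the bracketed quantity equals $-S(f-e,b,c,d)$, which is \emph{not} zero. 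So the $F_1$ contributions from $L_5$ and $L_6$ do not cancel each other, nor do all three $F_1$ contributions cancel; four distinct ${}_4F_3^*$ series survive with nonzero coefficients. Consequently the residual identity is a genuine four-term ${}_4F_3^*$ relation, not a two-term Thomae transformation, and your proposed final step (``recognize this as a Thomae two-term relation'') cannot be carried out.

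The paper proceeds quite differently. It does not expand (\ref{Orbit1KLL}) directly at all. Instead it takes the already-established Orbit~1 $(L,K,K)$ relation (\ref{Orbit1KKL}) and applies to it the two changes of variable
\[
\vec{x}\mapsto(a,b,e-c,e-d,e,1+a+b-f,1+a+b-g)^T,\qquad
\vec{x}\mapsto(a,b,f-c,f-d,f,1+a+b-e,1+a+b-g)^T,
\]
each of which lies in $G_K$ (so $K_{p_0}$ and $K_{p_1}$ are preserved) but sends $L_3$ to $L_6$ and $L_5$, respectively. Eliminating the common $K_{p_1}$ from the two resulting relations gives a relation in $K_{p_0},L_5,L_6$; the coefficient of $K_{p_0}$ is $S(f-a,e-b)-S(e-a,f-b)$, which Lemma~\ref{triglemma}(a) collapses to $S(a-b,f-e)$, and dividing by $S(a-b)$ yields (\ref{Orbit1KLL}). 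If you want a self-contained argument, this elimination route is the one to follow; the direct-expansion route would require independently proving a nontrivial four-term ${}_4F_3^*$ identity, which is essentially equivalent in depth to the target statement.
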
 
\begin{proof}

Consider the transformations$$\vec{x}\to({a,b,e-c,e-d; e, 1+a+b-f,1+a+b-g})^T$$and$$\vec{x}\to({a,b,f-c,f-d; f,1+a+b-e,1+a+b-g})^T.$$Application of the first of these to our Orbit $1$ $(L,K,K)$ relation (\ref{Orbit1KKL}) yields a relation among $\ser{K}{p_0}$, $\ser{K}{p_1}$, and  $\ser{L}{6}$, while application of the second to  (\ref{Orbit1KKL}) yields a relation among
$\ser{K}{p_0}$, $\ser{K}{p_1}$, and $\ser{L}{5}$. Eliminating $\ser{K}{p_1}$ from this pair of newly formed relations, and applying the identity $\gg{s}\gg{1-s}=\pi/\sin\pi s$, we get\begin{align}
\label{Orbit1KLL_2}
&  \Bb{1-a}{e-a}{f-a}{g-a}{\bigl[S(f-a,e-b)-S(e-a,f-b)} \bigr]{  K_{p_0}(\vec{x}) }\nr-\,&{S( f-a,b-a)  \Bb{e-a}{e-b}{e-c}{e-d} L_{6}(\vec{x}) }\nr+\,&S(  e-a,b-a) \Bb{f-a}{f-b}{f-c}{f-d}  L_{5}(\vec{x}) 
=0.
\end{align}But Lemma(\ref{triglemma})(a) tells us that $$S(f-a,e-b)-S(e-a,f-b)=S(a-b,f-e);$$dividing (\ref{Orbit1KLL_2}) through by $S(  a-b)$ therefore yields (\ref{Orbit1KLL}).
\end{proof}

\begin{Proposition}
We have the  Hamming type $222$ $(K,K,K)$ relation
\begin{align}\label{KKK222}&S(c-b)A({\ds 1 - a,e-a, f-a,  g-a }) \ser{K}{p_0}
\nr+\,&S(a-c)A({1 - b,e-b, f-b,  g-b })\ser{K}{p_1}\nr+\,&S(b-a)A({1 - c,e-c, f-c,  g-c })\ser{K}{p_2}=0. \end{align}
\end{Proposition}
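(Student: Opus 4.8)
The plan is to derive the Hamming type $222$ $(K,K,K)$ relation \eqref{KKK222} from the type $222$ $(K,L,L)$ relation \eqref{Orbit1KLL} that was just established, in exactly the same spirit as the preceding proposition derived \eqref{Orbit1KLL} from \eqref{Orbit1KKL}. The cosets $p_0, p_1, p_2$ all belong to $G_K\backslash M$ and lie pairwise at Hamming distance $2$ (they differ only in the $r$-index, since $R^0(a,b,c,d)=(a,b,c,d)$ feeds into the first slot and permuting $b,c,d$ among themselves moves within a single $G_K$-coset only up to the relabelling encoded by $p_k$); concretely the representative transformations are $\vec{x}\mapsto(1+a-1,\dots)$-type maps that cyclically involve $a,b,c$ in the first four coordinates. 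So the relation should be obtainable by applying suitable changes of variable $\vec x\mapsto\rho\vec x$ to \eqref{Orbit1KLL} and then eliminating the two $L$ functions that appear.

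First I would pin down two elements $\rho_1,\rho_2\in M$ such that applying $\vec x\mapsto\rho_i\vec x$ to \eqref{Orbit1KLL} produces relations in which the $L$-terms are $L_6(\rho_i\vec x)$ and $L_5(\rho_i\vec x)$ equal to a common pair of $L$ functions (up to the known $G_L$-invariances), while the $K$-term becomes $K_{p_0}(\vec x)$ in the first case and, say, $K_{p_1}(\vec x)$ (resp. $K_{p_2}(\vec x)$) in the other two. The natural candidates permute $b,c$ (and simultaneously adjust $e,f,g$ so as to stay on $V$); for instance a transposition-type map sending $(a,b,c,d,e,f,g)$ to $(b,a,c,d,e,f,g)$ lies in $G_K$ applied on the left but changes which coset representative is "natural," converting $K_{p_0}$ into $K_{p_1}$ and permuting the arguments of the two $L$ functions. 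Because $G_L$ fixes $L$, those $L$ arguments will coincide after invoking the $W(D_5)$-symmetry, so the three shifted copies of \eqref{Orbit1KLL} become three linear relations among $K_{p_0},K_{p_1},K_{p_2}$ and the \emph{same} two $L$ functions $L_6(\vec x), L_5(\vec x)$ (in appropriate variables). Eliminating $L_6$ and $L_5$ from these — a $3\times 3$ determinant computation — kills the $L$'s and leaves a relation purely among $K_{p_0},K_{p_1},K_{p_2}$.

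The remaining work is to simplify the coefficients of that eliminated relation into the stated form. After the elimination the coefficient of each $K_{p_j}$ will be a product of $A$-factors times a $2\times 2$ determinant of sine products; I expect this determinant to be exactly of the shape handled by Lemma~\ref{triglemma}(a), namely $S(p-q,r-s)-S(p-r,q-s)=-S(q-r,p-s)$, which collapses it to a single $S(\,\cdot\,)$ factor such as $S(c-b)$, $S(a-c)$, or $S(b-a)$. Along the way I would use $\Gamma(s)\Gamma(1-s)=\pi/\sin\pi s$ to convert stray gamma factors arising from the $L$-normalizations into sines (as in the previous proof), and divide through by a common sine factor (here something like $S(c-b)$ or a product thereof) to reach \eqref{KKK222} in lowest terms. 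Finally I would verify the length count: each coefficient is one $S$ (length $2$) times one $A$-factor $A(1-a,e-a,f-a,g-a)$ (length $4$), total length $6=2(2+2+2)-6$, matching Theorem~\ref{bigthm}(e), and width $1=2^{2/2-1}$, matching part (d).

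**Main obstacle.** The bookkeeping obstacle is identifying the precise $\rho_i\in M$ and the precise $G_L$-invariances needed so that all three shifted relations genuinely involve one common pair of $L$ functions — getting the coset representatives and the signs/permutations in $W(D_5)$ to line up is the delicate part. Once that alignment is correct, the elimination is mechanical and the coefficient simplification is a direct application of Lemma~\ref{triglemma}(a); the only real risk is an arithmetic slip in the $3\times 3$ elimination or in tracking which of $b,c$ plays which role, so I would cross-check the final coefficients against the reparameterized form $\widetilde K(\pm x_0,\dots,\pm x_5)$ of Remark~\ref{R315} to confirm the relation is the claimed Orbit representative.
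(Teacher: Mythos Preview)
Your approach is correct and would work, but it differs from the paper's. The paper simply cites this relation as Proposition~7.3 in \cite{FGS} (adjusted for the renormalization of $K(\vec{x})$ in Remark~\ref{R210}); no derivation is given here because the $(K,K,K)$ relations were established in that earlier work.

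Your route---applying the transpositions $(12)$ and $(13)$ to the Orbit~1 $(K,L,L)$ relation \eqref{Orbit1KLL} and eliminating $L_5,L_6$---is genuinely self-contained and in the spirit of the other derivations in this section. A small correction to your description: the transposition $(12)=s_{1'}$ lies in $G_L$, not in $G_K$; that is precisely why the $L$-functions are fixed while $K_{p_0}$ becomes $K_{p_1}$. Once you have the three relations, the $2\times2$ minors $S(f-b)S(e-c)-S(f-c)S(e-b)$, etc., collapse via Lemma~\ref{triglemma}(a) to $S(b-c)S(f-e)$, $S(c-a)S(f-e)$, $S(a-b)S(f-e)$, and dividing out the common $S(f-e)^2$ gives \eqref{KKK222} exactly. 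Your method has the advantage of making the paper's treatment of all eighteen relations internally closed (depending only on Proposition~\ref{firstprop} and the trigonometric lemma), whereas the paper's citation keeps the $(K,K,K)$ cases anchored to \cite{FGS}.
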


\begin{proof}  This is Proposition 7.3 in \cite{FGS} (under the renormalization of $K(\vec{x})$ described in  Section 2---cf. Remark \ref{R210}---above).  \end{proof}

\begin{Proposition}
We have the $L$-coherent $(L,L,L)$ relation 
\begin{align}\label{coherLLL}
&S(f-g)A({e - a, e - b, e - c, e - d })\ser{L}{6}  \nr+\,& S(g-e)A({f - a, f - b, f - c, f - d })\ser{L}{5} \nr
+\,&S(e-f)A({g - a, g - b, g - c, g - d})  \ser{L}{4}
=0. 
\end{align}\end{Proposition}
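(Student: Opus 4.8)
The plan is to imitate the strategy already used to derive \eqref{Orbit1KLL} from \eqref{Orbit1KKL}: apply a change of variable $\vec{x}\mapsto\rho\vec{x}$ (with $\rho\in M$) to an already-established relation and then eliminate an unwanted function. Here, however, all three functions in the target relation are $L$ functions, so the natural source relation is the Orbit $1$ $(K,L,L)$ relation \eqref{Orbit1KLL}, which relates $K_{p_0}(\vec{x})$ to $L_6(\vec{x})$ and $L_5(\vec{x})$. First I would apply to \eqref{Orbit1KLL} a transformation $\rho_1\in M$ chosen so that the coset $p_0$ is sent to itself (i.e., $\rho_1$ fixes $p_0\in G_K\backslash M$, equivalently $K(\rho_1\vec{x})=K(\vec{x})$ up to the explicit renormalization bookkeeping) while $\{5,6\}\subset G_L\backslash M$ is moved to a pair containing $4$; concretely one wants $\rho_1$ to permute the $L$-coordinate labels so as to produce a relation among $K_{p_0}(\vec{x})$, $L_5(\vec{x})$, and $L_4(\vec{x})$. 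By Proposition~\ref{P410}, such a $\rho_1$ exists inside the stabilizer of $p_0$ (which acts as $S_6$ on $\{1,\dots,6\}$ by permutations): for instance the generator $s_5=(56)$, which swaps the labels $5$ and $6$, or a suitable product realizing the $3$-cycle on $\{4,5,6\}$.

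Next I would take the two relations so obtained --- the original \eqref{Orbit1KLL} relating $\{K_{p_0},L_6,L_5\}$, and its transform relating $\{K_{p_0},L_5,L_4\}$ (or, more symmetrically, three such transforms relating the three pairs drawn from $\{4,5,6\}$) --- and eliminate $K_{p_0}(\vec{x})$ between them by taking an appropriate linear combination. Because each relation has $K_{p_0}(\vec{x})$ with an explicit coefficient that is a product of an $S$-factor and an $A$-factor, the elimination reduces to cross-multiplying these coefficients and collecting the $L_6$, $L_5$, $L_4$ terms. This will produce a relation of the shape
\begin{align*}
&\alpha_6\, A(e-a,e-b,e-c,e-d)\,L_6(\vec{x})
+\alpha_5\, A(f-a,f-b,f-c,f-d)\,L_5(\vec{x})\\
&\qquad+\alpha_4\, A(g-a,g-b,g-c,g-d)\,L_4(\vec{x})=0,
\end{align*}
where $\alpha_6,\alpha_5,\alpha_4$ are products of several $S$-factors (coming from the cross-multiplication of the $K_{p_0}$-coefficients and from the $A$-factors of $K_{p_0}$, rewritten via $\Gamma(s)\Gamma(1-s)=\pi/\sin\pi s$). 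The final step is to simplify $\alpha_6,\alpha_5,\alpha_4$ down to the single sines $S(f-g)$, $S(g-e)$, $S(e-f)$ appearing in \eqref{coherLLL}; this is where the trigonometric identity Lemma~\ref{triglemma}(a) enters, exactly as it did in collapsing $S(f-a,e-b)-S(e-a,f-b)$ to $S(a-b,f-e)$ in the proof of \eqref{Orbit1KLL}. One divides through by whatever common sine/gamma factor survives, and checks that the length is $2(2+2+2)-6=6$ and the width is $1$ in each coefficient, consistent with Theorem~\ref{bigthm}(d),(e) for a Type $222$ relation.

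The main obstacle I anticipate is bookkeeping rather than conceptual: one must track carefully how the change of variable $\rho_1$ acts simultaneously on the arguments of $K$, on the arguments of the $L$ functions, and on the $S$- and $A$-coefficients, and then verify that after elimination the coefficient of $K_{p_0}$ truly cancels (this requires the two $K_{p_0}$-coefficients to be proportional after the substitution, which is the crux and must be confirmed by explicit computation using the action formulas for the generators of $M$). A secondary check is that the resulting $\alpha_i$ really do reduce to the clean form in \eqref{coherLLL} via a single application of Lemma~\ref{triglemma}(a); if a stray factor remains, it should be a sine of a difference of parameters that divides out uniformly from all three terms. Alternatively --- and this may be the cleanest route --- one can simply verify \eqref{coherLLL} directly by substituting the Saalsch\"utzian ${}_4F_3$ definitions \eqref{Ldef} of $L_6,L_5,L_4$ and checking that the eight resulting ${}_4F_3^*$ series cancel in pairs after clearing denominators, exactly as in the proof of Proposition~\ref{firstprop}; the $L$-coherence of $\{4,5,6\}$ guarantees that the "complementary/supplementary" series line up so that no genuinely new series survives. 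I would present whichever of these two derivations is shorter, most likely the direct one.
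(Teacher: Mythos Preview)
The paper's own proof is a one-line citation to \cite{M1}, so your proposal is necessarily different in spirit: you are attempting a self-contained derivation inside the present framework. Your first route---apply a transformation in the stabilizer of $p_0$ to the Orbit~$1$ $(K,L,L)$ relation \eqref{Orbit1KLL} to produce a second relation among $K_{p_0}$ and two of $L_4,L_5,L_6$, then eliminate $K_{p_0}$---is correct and in fact quite short. Two minor corrections: the generator you want is $s_4=(67)$ (transposing $f$ and $g$), which by Proposition~\ref{P410} swaps the $L$-labels $4$ and $5$ and sends \eqref{Orbit1KLL} to a relation among $K_{p_0},L_6,L_4$; your suggested $s_5=(56)$ merely swaps $L_5$ and $L_6$ and returns the same pair. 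Also, the $K_{p_0}$-coefficients need not be ``proportional'' for elimination---cross-multiply and subtract, then Lemma~\ref{triglemma}(a) collapses the coefficient of $L_6$ to $S(e-a,g-f)$, and dividing through by $S(e-a)$ gives exactly \eqref{coherLLL}. This is arguably preferable to a bare citation, and mirrors the paper's treatment of the other Type~$222$ relations.

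Your second route, however, has a genuine gap. There are six ${}_4F_3^*$ series in play (two from each $L_i$), not eight, and they do \emph{not} cancel in pairs. The first series in each of $L_6,L_5,L_4$ is the \emph{same} series ${}_4F_3^*[a,b,c,d;e,f,g;1]$, while the three supplementary series (with respect to $e$, $f$, $g$ respectively) are three \emph{distinct} series, each appearing only once. So a direct cancellation in the style of Proposition~\ref{firstprop} is impossible: each supplementary series would have to carry coefficient zero individually, which is false. The relation \eqref{coherLLL} is a genuine three-term identity, not a telescoping one; that is precisely why \cite{M1} (and Bailey/Whipple before it) needed nontrivial input to establish it. Stick with your elimination argument.
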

 
\begin{proof}  This is Proposition 7.2 in \cite{M1}.\end{proof}

\subsection{Type $224$ relations} 

There are five relations of Type $224$: the Orbits 3 and 4 $(L,K,K)$ relations, the $L$-incoherent   $(L,L,L)$ relation, the Hamming type $224$ $(K,K,K)$ relation, and the Orbit $2$ $(K,L,L)$ relation. 
In any such relation, the coefficient of any $K$ or $L$ function has length $2(2+2+4)-6=10$.

\begin{Proposition}  We have the Orbit $3$ $(L,K,K)$ relation \begin{align}\label{Orbit3KKL}
& S(c)A(1 - a, e - a, f - a, g - a)  A({b, 1 + b - e, 1 + b - f, 1 + b - g} )  \ser{K}{p_0} \nr- \,&S(b-a)A  ({a,b,c,d } )A(1 - c, e - c, f - c, g - c )\ser{L}{\overline{1}} \nr-\, &B_1(\vec{x})   K_{p_1}(\vec{x})  
=0. 
\end{align}
\end{Proposition}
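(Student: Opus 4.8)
The plan is to imitate the derivation of the Orbit~1 $(K,L,L)$ relation \eqref{Orbit1KLL}: start from an already-established three-term relation, hit it with suitable changes of variable $\vec x\mapsto g\vec x$ ($g\in M$), and eliminate the unwanted function. Here the natural starting point is the Orbit~1 $(L,K,K)$ relation \eqref{Orbit1KKL}, which relates $K_{p_0}$, $K_{p_1}$, and $L_3$. First I would locate the element $g\in M$ whose coset sends $\{p_0,p_1,3\}$ to a triple containing $p_1$ and the target pair $\{p_0,\overline 1\}$ (so that \eqref{Orbit3KKL} is genuinely the image of \eqref{Orbit1KKL} under a single change of variable, in accordance with Theorem~\ref{bigthm}(c)). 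Concretely, I expect the required transformation to be of the form $\vec x\mapsto(1-a,e-a,f-a,g-a;\dots)$-type composed with a permutation, chosen so that $L_3$ of the new argument becomes $L_{\overline 1}(\vec x)$, $K_{p_0}$ of the new argument becomes $K_{p_1}(\vec x)$ (or vice versa), and the third function becomes $K_{p_0}(\vec x)$; the coefficient $A(a,b,c,d)$ in \eqref{Orbit1KKL} then transforms into the product $A(1-c,e-c,f-c,g-c)$ appearing in front of $L_{\overline 1}$, after the usual reflection-formula bookkeeping.

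Once the correct change of variable is applied to \eqref{Orbit1KKL}, the result will be a relation of the form
\begin{equation*}
\gamma_1 K_{p_0}(\vec x)+\gamma_2 K_{p_1}(\vec x)+\gamma_3 L_{\overline 1}(\vec x)=0,
\end{equation*}
where $\gamma_1,\gamma_2,\gamma_3$ are, a priori, products of sines and $1/\Gamma$ factors read off from the transformed coordinates. I would then simplify each $\gamma_j$ using the identity $\Gamma(s)\Gamma(1-s)=\pi/\sin\pi s$ to convert stray gamma factors into sines, and, crucially, invoke Lemma~\ref{triglemma}(a) (and possibly (b)) to collapse any two-term trigonometric combination that arises into a single $S(\cdot)$ or into a factor that cancels across the whole relation — exactly as $S(f-a,e-b)-S(e-a,f-b)=S(a-b,f-e)$ was used to pass from \eqref{Orbit1KLL_2} to \eqref{Orbit1KLL}. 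The coefficient $B_1(\vec x)$, being a width-2 object by construction, should emerge precisely as one such two-term combination $\pi^{-4}S(b,c)[S(b-a,c-a,d)+S(e-a,f-a,g-a)]$ multiplying $K_{p_1}(\vec x)$; identifying it with the defined $B_1$ in Definition~\ref{D510}(b) is then a matter of matching.

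The main obstacle is bookkeeping rather than conceptual: I must pin down the exact $g\in M$ (equivalently, the exact coordinate substitution) that realizes the orbit map, and then track how every sine and gamma factor transforms, because a wrong choice produces a relation in the right three functions but with coefficients that differ from the claimed ones by a stray common factor or by a sign. A secondary subtlety is that the coefficient of $L_{\overline 1}$ involves both $A(a,b,c,d)$ and $A(1-c,e-c,f-c,g-c)$ (length $4+4=8$), while the coefficient of $K_{p_0}$ involves $A(1-a,e-a,f-a,g-a)\,A(b,1+b-e,1+b-f,1+b-g)$ and the lone sine $S(c)$ (length $8+2-0$... wait, $8+2=10$), and $B_1(\vec x)$ has length $10$ and width $2$ — so verifying the length count $2(2+2+4)-6=10$ and the width counts ($1,1,2$) demanded by Theorem~\ref{bigthm}(d),(e) for the type-$224$ triple $\{p_0,p_1,\overline 1\}$ serves as a useful consistency check that the transformation and simplifications were carried out correctly. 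Once these match, the proposition follows by direct substitution into the definitions \eqref{Kdef} and \eqref{Ldef}, as in Proposition~\ref{firstprop}.
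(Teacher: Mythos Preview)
Your plan has a genuine gap. You propose to obtain \eqref{Orbit3KKL} from \eqref{Orbit1KKL} by a \emph{single} change of variable $\vec x\mapsto g\vec x$, but no such $g\in M$ exists: by Proposition~\ref{P440}, the triple $(3,\{p_0,p_1\})$ lies in Orbit~1 while $(\overline 1,\{p_0,p_1\})$ lies in Orbit~3, and these are distinct orbits of the $M$-action on $(G_L\backslash M)\times S(K^2)$. Theorem~\ref{bigthm}(c) only moves relations around \emph{within} an orbit, so it cannot be invoked to pass between them. A concrete symptom of the obstruction: every coefficient in \eqref{Orbit1KKL} has width~1, and a change of variable preserves width, so the width-2 coefficient $B_1(\vec x)$ in front of $K_{p_1}$ can never arise this way --- there is no ``two-term trigonometric combination'' to collapse, because after a single substitution you still have only one term.

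The paper's argument supplies precisely the elimination step that your detailed plan omits. One applies the change of variable $\vec x\mapsto(b,\,1+b-e,\,1+b-f,\,1+b-g,\,1+b-a,\,1+b-c,\,1+b-d)^T$ to the Orbit~1 $(K,L,L)$ relation \eqref{Orbit1KLL} (not to \eqref{Orbit1KKL}); this produces a relation among $K_{p_1}$, $L_3$, and $L_{\overline 1}$. One then forms a linear combination of this new relation with \eqref{Orbit1KKL} so as to eliminate the common function $L_3$. The coefficient of $K_{p_1}$ in the resulting identity is a genuine sum of two width-1 terms, which after the reflection formula becomes $-S(a)B_1(\vec x)$; dividing through by $S(a)$ yields \eqref{Orbit3KKL}. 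Thus your opening instinct to ``eliminate the unwanted function'' was correct, but the body of your proposal abandoned exactly that idea in favor of a single substitution that cannot work.
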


\begin{proof}Applying, to our Orbit $1$ $(K,L,L)$ relation (\ref{Orbit1KLL}), the change of variable $$\vec{x}\to({b, 1+b-e,1+b-f,1+b-g,1+b-a,1+b-c,1+b-d}),$$ we obtain a relation among $K_{p_1}(\vec{x})$,  $L_3(\vec{x})$, and $L_{\overline{1}}(\vec{x})$. We form a linear combination of this relation with  our Orbit $1$ $(L,K,K)$ relation (\ref{Orbit1KKL}), to eliminate $L_3(\vec{x})$. The result is \begin{align}\label{Orbit3KKL_2}&S(a,c )A({1-a,e-a,f-a,g-a }) \nr\times\,& A({b,1+b-e,1+b-f,1+b-g })\ser{K}{p_0} \nr-\,&  S(a,b-a)  A(a,b,c,d) A({1-c,e-c,f-c,g-c})\ser{L}{\overline1} \nr+\,
 & \bigl[S(a-c,b-a)  A(a,b,c,d) A({ 1 - a,1-b, 1-c, 1-d })   \nr -\,&S(b,c )A({1-a,e-a,f-a,g-a }) A({a,1+a-e,1+a-f,1+a-g })\bigr]\nr\times\,&\ser{K}{p_1}
=0. 
\end{align} Because $\gg{s}\gg{1-s}=\pi/\sin\pi s$, the quantity in square brackets,  in (\ref{Orbit3KKL_2}), equals \begin{align*}   &-\pi^{-4}\bigl[S(a,b,c,d,b-a,c-a)-S(a,b,c,e-a,f-a,g-a)\bigr]\\=\,&-S(a)B_1(\vec{x}),\end{align*}so dividing (\ref{Orbit3KKL_2}) through by $S(a)$  yields (\ref{Orbit3KKL}).
\end{proof}

\begin{Proposition}  We have the Orbit $4$ $(L,K,K)$ relation \begin{align}
\label{Orbit4KKL}
&S(f-a)A(1 - a, e - a, f - a, 
  g - a)\nr\times \,&A({1 + a - g, 1 + b - g, 1 + c - g, 1 + d - g }) \ser{K}{p_0} \nr-\,&   S(g-a)A({f-a,f-b,f-c,f-d })   \nr\times\,& A(a, 1 + a - e, 1 + a - f, 1 + a - g) \ser{K}{n_4} \nr-\,&  B_2(\vec{x})  \ser{L}{4} 
=0. 
\end{align}
\end{Proposition}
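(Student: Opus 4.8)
The plan is to follow the template used for the Orbit $3$ $(L,K,K)$ relation: realize \eqref{Orbit4KKL} by splicing together relations already in hand---the four Type $222$ relations \eqref{Orbit1KKL}, \eqref{Orbit1KLL}, \eqref{KKK222}, \eqref{coherLLL} and the Orbit $3$ relation \eqref{Orbit3KKL}, together with their images under changes of variable $\vec{x}\mapsto\rho\vec{x}$ ($\rho\in M$)---and then to normalize the coefficients using the reflection formula $\Gamma(s)\Gamma(1-s)=\pi/\sin\pi s$ and Lemma \ref{triglemma}. Since $\{p_0,n_4,4\}$ has type $224$ but, by Proposition \ref{P440}, lies in a different $M$-orbit from both the Orbit $1$ and the Orbit $3$ $(L,K,K)$ subsets (and from all the Type $222$ subsets), no single change of variable applied to an earlier relation can produce it, so at least two relations must be combined.

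Concretely, I would first record the transform of \eqref{Orbit1KLL} under the swap $e\leftrightarrow g$. This swap is $(57)=s_4s_5s_4\in G_K$, so it fixes $\ser{K}{p_0}$ while carrying $L_6$ to $L_4$ and $L_5$ to itself, yielding
\begin{align*}
&S(f-g)\,\Bb{1-a}{e-a}{f-a}{g-a}\,\ser{K}{p_0}+S(f-a)\,\Bb{g-a}{g-b}{g-c}{g-d}\,\ser{L}{4}\\
&\qquad{}-S(g-a)\,\Bb{f-a}{f-b}{f-c}{f-d}\,\ser{L}{5}=0.
\end{align*}
Next I would apply to \eqref{Orbit1KLL} the change of variable $\vec{x}\mapsto(e-a,e-b,e-c,e-d,1+e-f,1+e-g,e)^T$---a representative of the coset $n_4$---which carries $\ser{K}{p_0}$ to $\ser{K}{n_4}$ and $\Bb{1-a}{e-a}{f-a}{g-a}$ to $\Bb{a}{1+a-e}{1+a-f}{1+a-g}$, and produces a three-term relation expressing $\ser{K}{n_4}$ as a combination of two of the twelve functions $L_i(\vec{x})$ (one of the two coefficients working out to $S(f-a)\,\Bb{1+a-g}{1+b-g}{1+c-g}{1+d-g}$, the product already visible in \eqref{Orbit4KKL}). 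Taking the linear combination of these relations---bringing in \eqref{Orbit1KLL} and the $L$-coherent relation \eqref{coherLLL} if an intermediate elimination step is needed to cancel an auxiliary $L$ function---that kills everything except $\ser{K}{p_0}$, $\ser{K}{n_4}$, and $\ser{L}{4}$ then gives a three-term relation among exactly those three.

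It remains to bring the coefficients into the asserted form. Using $\Gamma(s)\Gamma(1-s)=\pi/\sin\pi s$ to rewrite each product $A(g_1\vec{x})A(g_2\vec{x})$ that arises, the coefficients of $\ser{K}{p_0}$ and $\ser{K}{n_4}$ become $S(x)$ times two $A$-factors, while the coefficient of $\ser{L}{4}$ becomes a $\pm$-combination of sine products; collapsing the latter with Lemma \ref{triglemma}(a) reduces it to $S(g-a,f-b,f-c,f-d)-S(f-a,g-b,g-c,g-d)$, which by Lemma \ref{triglemma}(b) equals $S(f-g)\bigl[S(b-a,c-a,d-a)+S(e-2a,f-a,g-a)\bigr]=\pi^{4}S(f-g)\,B_2(\vec{x})/S(f-a,g-a)$. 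Dividing the whole relation by the sine factor common to all three coefficients (legitimate, since that factor divides each coefficient as a polynomial in sines and cosines) leaves exactly \eqref{Orbit4KKL}. As a check I would confirm, against Theorem \ref{bigthm}(d),(e), that the coefficients of $\ser{K}{p_0}$, $\ser{K}{n_4}$, $\ser{L}{4}$ have widths $2^{d(n_4,4)/2-1}=1$, $2^{d(p_0,4)/2-1}=1$, $2^{d(p_0,n_4)/2-1}=2$, and common length $2(2+2+4)-6=10$, consistent with $\{p_0,n_4,4\}$ being of type $224$.

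The step I expect to be the main obstacle is the trigonometric simplification of the $\ser{L}{4}$ coefficient: after the elimination it appears as a sum of several sine products with mixed signs, and collapsing it to the two-summand expression $B_2(\vec{x})$ calls for a careful---probably iterated---use of Lemma \ref{triglemma}(a) in the style of the proof of Lemma \ref{triglemma}(b), while at the same time keeping exact track of which single sine factor is common to all three coefficients, so that the concluding division leaves no stray factor and introduces no spurious pole.
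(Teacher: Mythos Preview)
Your proposal is correct and follows essentially the same route as the paper: the paper too transforms \eqref{Orbit1KLL} by $e\leftrightarrow g$ and, separately, by $\vec{x}\mapsto(e-a,e-b,e-c,e-d,1+e-f,1+e-g,e)^T$, then eliminates the common $L$ function and simplifies the $L_4$ coefficient via Lemma~\ref{triglemma}(b) before dividing through by $S(f-g)$. Two small sharpenings would make your sketch match the paper exactly: the second change of variable already sends $L_6,L_5$ to $L_5,L_4$ (respectively), so both transformed relations involve precisely $L_4$ and $L_5$ and no auxiliary $(L,L,L)$ relation such as \eqref{coherLLL} is needed; and the $L_4$ coefficient, after using $\Gamma(s)\Gamma(1-s)=\pi/\sin\pi s$, is directly $\pi^{-4}S(f-a,g-a)\bigl[S(g-a,f-b,f-c,f-d)-S(f-a,g-b,g-c,g-d)\bigr]$, so a single application of Lemma~\ref{triglemma}(b) (not an iterated use of part~(a)) collapses it to $S(f-g)B_2(\vec{x})$.
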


\begin{proof}
By applying, to our Orbit $1$ $(K,L,L)$ relation (\ref{Orbit1KLL}),  the interchange of $e$ and $g$ and then, separately,    the change of variable$$ \vec{x}\to({e-a,e-b,e-c,e-d,1+e-f,1+e-g,e})^T,$$we obtain a relation among $K_{p_0}(\vec{x})$, $L_4(\vec{x})$, and $L_5(\vec{x})$, and another among $K_{n_4}(\vec{x})$, $L_4(\vec{x})$, and $L_5(\vec{x})$.  From these latter two three-term relations, we form a linear combination to eliminate $L_5(\vec{x})$.  We get
\begin{align}
\label{Orbit4KKL_2}& S(f-a,f-g)A({ 1 - a,e-a, f-a, g-a })\nr\times\,
 &  A({1+a-g,1+b-g,1+c-g,1+d-g }) \ser{K}{p_0} \nr-\,&S(g-a,f-g)A({f - a, f - b, f - c, f - d })\nr\times\,& A({ 1+a-e,1+a-f, 1+a-g,a})  \ser{K}{n_4} \nr-\bigl[&S(g-a,g-a)A({f - a, f - b, f - c, f - d })\nr\times\, &  A({1+a-f,1+b-f,1+c-f,1+d-f})\nr-\,& S(f-a,f-a)A({g - a, g - b,g- c, g - d }) \nr\times\,&  A({1+a-g,1+b-g,1+c-g,1+d-g })\bigr]\ser{L}{4}  
=0. \end{align}
But, because $\gg{s}\gg{1-s}=\pi/\sin\pi s$, and by Lemma \ref{triglemma}(b), the above term in square brackets equals\begin{align}&\pi^{-4}S(f-a,g-a)\nr\times\,&[S(g-a,f-b,f-c,f-d)-S(f-a,g-b,g-c,g-d)]\nr=\,&\pi^{-4}S(f-a,g-a,f-g)[S(b-a,c-a,d-a)+S(e-2a,f-a,g-a)]\nr=\,&S(f-g)B_2(\vec{x}).\end{align} So dividing (\ref{Orbit4KKL_2}) through by $S(f-g)$ yields (\ref{Orbit4KKL}).  \end{proof}

\begin{Proposition}  We have the $L$-incoherent $(L,L,L)$ relation
\begin{align}\label{incoherLLL}
&S(g) \Bb{1+a-f}{1+b-f}{1+c-f}{1+d-f}\nr\times\,&A ({e - a, e - b, e - c, e - d})\ser{L}{6}    \nr +\,&  S(e-f)\Bb{a}{b}{c}{d} A({g - a, g - b, g - c, g - d})\ser{L}{\overline6}
\nr-\, &   B_3(\vec{x}) \ser{L}{5}
=0. 
\end{align}
 \end{Proposition}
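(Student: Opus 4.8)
The plan is to obtain (\ref{incoherLLL}) from the $L$-coherent relation (\ref{coherLLL}) by a single change of variable followed by an elimination, in the same style as the proofs of (\ref{Orbit1KLL}), (\ref{Orbit3KKL}), and (\ref{Orbit4KKL}). The key tool is the element $\lambda_2\in M$ from the proof of Proposition \ref{P440}: its action on $\vec{x}$ is $\lambda_2\vec{x}=(e-a,e-b,e-c,e-d,e,1+e-f,1+e-g)^T$, and its action on $G_L\backslash M$ sends $6\mapsto\overline{6}$, $5\mapsto 4$, and $4\mapsto 5$. Hence $\lambda_2$ maps the $L$-coherent triple $\{6,5,4\}$ to the $L$-coherent triple $\{\overline{6},5,4\}$, and under the substitution $\vec{x}\mapsto\lambda_2\vec{x}$ we have $L_6\mapsto L_{\overline{6}}$, $L_5\mapsto L_4$, and $L_4\mapsto L_5$.

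First I would apply the substitution $\vec{x}\mapsto\lambda_2\vec{x}$ to (\ref{coherLLL}). Normalizing the resulting sine factors by means of $\sin\pi(1-s)=\sin\pi s$ and $\sin\pi(1+s)=-\sin\pi s$, and simplifying the arguments of the $A$-factors (for instance, $e-a$ becomes $a$ and $f-a$ becomes $1+a-f$), one arrives at the relation
\begin{align*}
&S(g)A(1+a-f,1+b-f,1+c-f,1+d-f)\ser{L}{4}-S(f)A(1+a-g,1+b-g,1+c-g,1+d-g)\ser{L}{5}\\
&\qquad{}-S(f-g)A(a,b,c,d)\ser{L}{\overline{6}}=0.
\end{align*}
I would then take the linear combination of this relation with (\ref{coherLLL}) that eliminates $\ser{L}{4}$, leaving a three-term relation among $L_6$, $L_{\overline{6}}$, and $L_5$.

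The remaining work is to simplify the three coefficients of that relation. The coefficients of $L_6$ and of $L_{\overline{6}}$ come out as $S(f-g)$ times the corresponding coefficients in (\ref{incoherLLL}). For the coefficient of $L_5$, I would use the reflection formula $\Gamma(s)\Gamma(1-s)=\pi/\sin\pi s$ in the forms
$$A(1+a-f,1+b-f,1+c-f,1+d-f)\,A(f-a,f-b,f-c,f-d)=\pi^{-4}S(f-a,f-b,f-c,f-d)$$
and $A(g-a,g-b,g-c,g-d)\,A(1+a-g,1+b-g,1+c-g,1+d-g)=\pi^{-4}S(g-a,g-b,g-c,g-d)$, together with $\sin\pi(e-f)=-\sin\pi(f-e)$; this reduces the coefficient of $L_5$ to
$$\pi^{-4}\bigl[S(g,g-e,f-a,f-b,f-c,f-d)-S(f,f-e,g-a,g-b,g-c,g-d)\bigr],$$
which, by Definition \ref{D510}, equals $-S(f-g)\,B_3(\vec{x})$. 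Dividing the whole relation through by $S(f-g)$ -- permissible because the resulting singularities in $B_3(\vec{x})$ are removable, as noted after Definition \ref{D510} -- then yields (\ref{incoherLLL}).

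I expect the main obstacle to be bookkeeping rather than conceptual: one must track carefully the signs of the sine factors under $\lambda_2$, where several arguments acquire the form $1\pm(\text{linear expression})$, and then verify that the coefficient of $L_5$ reorganizes into exactly the difference of two products of six sines that defines $\pi^4S(f-g)B_3(\vec{x})$. In contrast to the adjacent proofs, the identities of Lemma \ref{triglemma} do not seem to be needed here: the entire simplification is driven by the reflection formula for $\Gamma$ together with the definition of $B_3(\vec{x})$. Once the relation is written in the form (\ref{incoherLLL}), the assertions of Theorem \ref{bigthm}(d),(e) for it -- all coefficients of length $2(2+2+4)-6=10$, and widths $1$, $1$, $2$ for the coefficients of $L_6$, $L_{\overline{6}}$, $L_5$ respectively -- can be read off directly.
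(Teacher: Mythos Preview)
Your proof is correct but follows a genuinely different route from the paper's. The paper simply takes equation~(7.7) of \cite{M1} (where the $L$-incoherent relation is already established), multiplies it through by
\[
\frac{S(e-f,g)}{S(f-g)}\,A(1+a-f,1+b-f,1+c-f,1+d-f)\,A(g-a,g-b,g-c,g-d),
\]
and applies the reflection formula $\Gamma(s)\Gamma(1-s)=\pi/\sin\pi s$ to arrive directly at (\ref{incoherLLL}). Your approach instead derives the $L$-incoherent relation internally from the $L$-coherent relation (\ref{coherLLL}): applying $\lambda_2$ to (\ref{coherLLL}) produces a second coherent relation among $L_{\overline{6}}$, $L_4$, and $L_5$, and eliminating $L_4$ between this and the original (\ref{coherLLL}) yields the incoherent relation. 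I have checked your computation and it goes through as claimed; in particular the coefficient of $L_5$ does collapse to $-S(f-g)\,B_3(\vec{x})$ via the reflection formula alone, with no appeal to Lemma~\ref{triglemma}. Your method has the advantage of being more in keeping with the style of the surrounding proofs in this paper and of showing explicitly that the $L$-incoherent relation is a consequence of the $L$-coherent one together with the action of $M$, rather than a second independent input from \cite{M1}; the paper's approach is shorter but less self-contained.
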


\begin{proof} Multiplying equation (7.7) in \cite{M1}  through by\begin{align*}&\frac{S(e-f,g)}{S(f-g)}\\\times\,&B(1+a-f,1+b-f,1+c-f,1+d-f)B(g-a,g-b,g-c,g-d),\end{align*}and applying the identity $\gg{s}\gg{1-s}=\pi/\sin\pi s$, yields

\begin{align*}
& S(g)\Bb{1+a-f}{1+b-f}{1+c-f}{1+d-f} \\\times\,&\Bb{e-a}{e-b}{e-c}{e-d} \ser{L}{6}  \nr- \,&\frac{1}{\pi^4 A(f-g)} \biggl[\genfrac{} {}{0pt}{}{ A( g,g-e,f-a,f-b,f-c,f-d)}{ -A( f,e-f,g-a,g-b,g-c,g-d) } \biggr]\ser{L}{5}\nr+\,&S(e-f)\Bb{a}{b}{c}{d} \Bb{g-a}{g-b}{g-c}{g-d}  \ser{L}{\overline6}
=0, 
\end{align*}which is precisely (\ref{incoherLLL}).\end{proof}

\begin{Proposition}  We have the Orbit $2$ $(K,L,L)$ relation \begin{align}\label{Orbit2KLL}
&S(g) \Bb{1+a-f}{1+b-f}{1+c-f}{1+d-f}\nr\times\,&  A({ 1 - a,e-a, f-a, g-a })  \ser{K}{p_0}\nr+\,&S(f-a)  \Bb{a}{b}{c}{d} A({g - a, g - b, g - c, g - d})\ser{L}{\overline6}   \nr-\,&B_4(\vec{x}) \ser{L}{5}=0. 
\end{align}\end{Proposition}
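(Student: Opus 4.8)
The plan is to derive the Orbit $2$ $(K,L,L)$ relation \eqref{Orbit2KLL} from relations already established, in exactly the same spirit as the derivations of \eqref{Orbit1KLL}, \eqref{Orbit3KKL}, and \eqref{Orbit4KKL}: form a linear combination of two known three-term relations, sharing two of their three functions, so as to eliminate one of the shared functions, and then simplify the resulting coefficients. The two natural inputs here are the $L$-incoherent $(L,L,L)$ relation \eqref{incoherLLL} and the Orbit $2$ $(K,L,L)$ relation's ``sibling'' obtained by a change of variable from an already-proved $(K,L,L)$ relation. More precisely, observe that \eqref{incoherLLL} is a relation among $\ser{L}{6}$, $\ser{L}{\overline6}$, and $\ser{L}{5}$, which already contains two of the three functions appearing in \eqref{Orbit2KLL} (namely $\ser{L}{\overline6}$ and $\ser{L}{5}$). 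So the strategy is: take a second relation involving $\ser{K}{p_0}$, $\ser{L}{6}$, and one of $\ser{L}{\overline6}$ or $\ser{L}{5}$ — the Orbit $2$ $(K,L,L)$ relation written with $L_6$ in place will come from applying a suitable $\rho\in M$ to \eqref{Orbit1KLL} — and eliminate $\ser{L}{6}$ between this relation and \eqref{incoherLLL}.

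Concretely, I would first identify the change of variable $\rho\vec{x}$ that sends the triple $\{p_0,6,5\}$ (the functions in \eqref{Orbit1KLL}) onto a triple of the form $\{p_0, 6, \overline6\}$ or $\{p_0,6,\overline 6\}$-adjacent, using the $\Phi$-action data in Proposition \ref{P410} to track the subscripts and the explicit action of $A$ (and $w_0$) on $\vec{x}$ to track the parameters; the $\overline6$ coset is the $w_0$-image of $6$, and $L_{\overline6}(\vec x) = L(1-a,1-b,1-c,1-d;2-e;2-f,2-g)$, so the transformation $e\leftrightarrow f$ composed appropriately with $w_0$ is the candidate to probe first. Having rewritten \eqref{Orbit1KLL} under that $\rho$, I obtain a relation among $\ser{K}{p_0}$, $\ser{L}{6}$, and $\ser{L}{\overline6}$ (or among $\ser{K}{p_0}$, $\ser{L}{\overline6}$, $\ser{L}{5}$, in which case a single further variable change puts it in the desired form). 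Then I form the linear combination with \eqref{incoherLLL} whose coefficients are chosen to cancel the common $L$ function, multiply through by whatever product of $A(g\vec x)$ and $S(x)$ factors clears the $\pi$-powers and $1/\sin\pi(f-g)$ that appear, and use the identity $\gg{s}\gg{1-s}=\pi/\sin\pi s$ throughout to consolidate gamma functions into sines. The emergence of $B_4(\vec x)$ in the coefficient of $\ser{L}{5}$ should follow from its defining formula $B_4(\vec{x}) = \pi^{-4}S(f-a,g-a)[S(b,c,d)+S(e-a,f,g)]$ after a trigonometric simplification, quite possibly an instance of Lemma \ref{triglemma}(a), analogous to how $B_2$ emerged via Lemma \ref{triglemma}(b) in the proof of \eqref{Orbit4KKL}.

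The main obstacle I anticipate is bookkeeping rather than conceptual: pinning down the \emph{exact} element $\rho\in M$ so that the transformed version of \eqref{Orbit1KLL} has precisely the coefficient $S(g)\,\Bb{1+a-f}{1+b-f}{1+c-f}{1+d-f}\,\Bb{1-a}{e-a}{f-a}{g-a}$ on $\ser{K}{p_0}$, and then carrying out the elimination so that the messy bracketed expression that arises collapses, via $\gg{s}\gg{1-s}=\pi/\sin\pi s$ and a trig identity, to exactly $B_4(\vec x)$ and not some equivalent-but-differently-grouped expression. A secondary check is verifying that the final relation has the length and width predicted by Theorem \ref{bigthm}(d),(e): since $\{p_0,\overline6,5\}$ has type $224$, every coefficient must have length $2(2+2+4)-6=10$ and the coefficient of each function must have width $2^{d/2-1}$ where $d$ is the distance between the other two cosets — so the $\ser{L}{5}$ coefficient, opposite the pair $\{p_0,\overline6\}$ at distance $4$, must have width $2$, consistent with $B_4$ being a two-term object of length $10$, while the other two coefficients have width $1$ and length $10$ (a product $A\cdot A\cdot S$, i.e.\ $4+4+2$). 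Confirming these numerics is the cleanest way to be sure the elimination was done correctly, so I would run that consistency check before declaring the proof complete.
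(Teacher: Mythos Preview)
Your high-level strategy --- combine the $L$-incoherent relation \eqref{incoherLLL} with a relation coming from \eqref{Orbit1KLL}, eliminate the shared $L$ function, and simplify the surviving coefficient into $B_4$ --- is exactly what the paper does. However, you have introduced an unnecessary and in fact impossible detour. You propose first to apply some $\rho\in M$ to \eqref{Orbit1KLL} so as to carry the triple $\{p_0,6,5\}$ onto $\{p_0,6,\overline6\}$ (or $\{p_0,\overline6,5\}$); but by Proposition~\ref{P430} these triples lie in distinct $M$-orbits (Orbit~1 versus Orbit~3, respectively Orbit~2), so no such $\rho$ exists. Your ``main obstacle'' --- pinning down $\rho$ --- is therefore not a bookkeeping issue but an obstruction.

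The fix is simpler than your plan: no change of variable is needed at all. Relation \eqref{Orbit1KLL} \emph{already} involves $K_{p_0}$, $L_6$, $L_5$, while \eqref{incoherLLL} involves $L_6$, $L_{\overline6}$, $L_5$; the two share $L_6$, and eliminating it directly produces a relation among $K_{p_0}$, $L_{\overline6}$, $L_5$. (Indeed, you yourself listed ``$K_{p_0}$, $L_6$, and $L_5$'' as one of the acceptable second relations, without noticing that this is \eqref{Orbit1KLL} verbatim.) The paper then simplifies the bracketed coefficient of $L_5$ using both parts (a) and (b) of Lemma~\ref{triglemma} --- not just part (a) as you anticipated --- and divides through by the common factor $S(f-e)$ to obtain \eqref{Orbit2KLL}. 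Your length/width consistency check is correct and is a sound way to confirm the final form.
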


\begin{proof}We take a linear combination of the Orbit $1$ $(K,L,L)$ relation (\ref{Orbit1KLL}) with the $L$-incoherent $(L,L,L)$ relation (\ref{incoherLLL}), to eliminate  $\ser{L}{6}$.  We get
\begin{align}\label{Orbit2KLL_2}&S(g,f-e) \Bb{1+a-f}{1+b-f}{1+c-f}{1+d-f}\nr\times\,&  A({ 1 - a,e-a, f-a, g-a })  \ser{K}{p_0}\nr-\,&S(f-a,e-f)\Bb{a}{b}{c}{d} A({g - a, g - b, g - c, g - d})\ser{L}{\overline6}   \nr-\,& \bigl[S(g,e-a) \Bb{1+a-f}{1+b-f}{1+c-f}{1+d-f}\nr\times\,&  A({f - a, f - b, f - c, f - d }) 
\nr-\, &  S(f-a) B_3(\vec{x})\bigr]
 \ser{L}{5}=0. 
\end{align}We observe that the quantity in square brackets, in (\ref{Orbit2KLL_2}), equals
\begin{align*}&\frac{1}{\pi^4S(f-g)}\left[ \begin{matrix}S(g,f-g,e-a,f-a,f-b,f-c,f-d)\\- S(f,f-e,f-a,g-a,g-b,g-c,g-d) \\+ S(g,g-e,f-a,f-a,f-b,f-c,f-d) \end{matrix} \right]\\=\,&\frac{1}{\pi^4S(f-g)} \left[ \begin{matrix}S(g,f-a,f-b,f-c,f-d)\\\times\bigl[S(f-g,e-a)+S(g-e,f-a)\bigr]\\- S(f,f-e,f-a,g-a,g-b,g-c,g-d)  \end{matrix}\right]
\\=\,&\frac{S(f-e,f-a,g-a)\bigl[ S(g, f-b,f-c,f-d)-S(f ,g-b,g-c,g-d)  \bigr]}{\pi^4S(f-g)}
\\=\,&\frac{S(f-e,f-a,g-a)\bigl[ S(b,c,d)+S(e-a,f,g)  \bigr]}{\pi^4 }=S(f-e)B_4(\vec{x}),\end{align*}where we have used parts (a) and (b) of Lemma (\ref{triglemma}).  Dividing equation (\ref{Orbit2KLL_2}) through by $S(f-e)$ then yields (\ref{Orbit2KLL}).\end{proof}

\begin{Proposition}  We have the  Hamming type $224$ $(K,K,K)$ relation\begin{align}\label{KKK224} & S(e-a-b) \Bb{1-a}{e - a}{f - a}{g-a}  \nr\times\, &A ({b,{1+b-e}, {1+b-f} , 1 + b - g})\ser{K}{p_0} \nr+\,& S(b-a)\Bb{1+a-e}{1+b-e}{g-c}{g-d}\nr \times\,&A ({a, b, f - c, f - d })K_{n_4}(\vec{x})  \nr-\,& B_5(\vec{x})K_{p_1}(\vec{x})
=0. \end{align}\end{Proposition}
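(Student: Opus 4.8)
The plan is to derive this Hamming type $224$ $(K,K,K)$ relation by the same bootstrapping strategy used for the previous propositions in this section, namely by combining earlier relations and eliminating an unwanted function. The natural starting point is the Orbit $3$ $(L,K,K)$ relation \eqref{Orbit3KKL}, which already involves $\ser{K}{p_0}$, $\ser{K}{p_1}$, and $\ser{L}{\overline1}$, together with the Orbit $4$ $(L,K,K)$ relation \eqref{Orbit4KKL}, which involves $\ser{K}{p_0}$, $\ser{K}{n_4}$, and $\ser{L}{4}$. Neither of these directly produces a $(K,K,K)$ relation, so I would first hit each of them with an appropriate change of variable $\vec{x}\mapsto\rho\vec{x}$ ($\rho\in M$) so that the two $L$-functions appearing become one and the same coset; eliminating that common $L$-function between the two transformed relations then leaves a relation among $\ser{K}{p_0}$, $\ser{K}{p_1}$, and $\ser{K}{n_4}$, which is exactly the support of \eqref{KKK224}. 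The choice of $\rho$ for each relation is dictated by matching parameter tuples in the list \eqref{Lcosetsdef}; concretely, one wants to send the $L_{\overline1}$ term of \eqref{Orbit3KKL} and the $L_4$ term of \eqref{Orbit4KKL} to a common $L_\sigma$ after suitable transformations that fix (or permute among themselves) $p_0$, $p_1$, $n_4$. Alternatively—and perhaps more cleanly—one could start from the Hamming type $222$ $(K,K,K)$ relation \eqref{KKK222} and one of the $(K,L,L)$ relations, since \eqref{KKK224} differs from \eqref{KKK222} only by having $n_4$ in place of $p_2$, and $n_4$ and $p_2$ sit at Hamming distance that can be bridged by a single coset move combined with an $L$-elimination.

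After the elimination step, the raw relation will have coefficients that are products of sines and $A$-functions, but not yet in the normalized form \eqref{KKK224}: in particular the coefficient of $\ser{K}{p_1}$ will appear as a difference of two such products rather than as the single compact symbol $B_5(\vec{x})$. The second main step is therefore to recognize that difference. By the identity $\gg{s}\gg{1-s}=\pi/\sin\pi s$, the $A$-function factors combine with sines to produce an expression of the form $\pi^{-4}\bigl[S(\cdots)+S(\cdots)\bigr]$ or $\pi^{-4}\bigl[S(\cdots)-S(\cdots)\bigr]$ up to an overall sine factor; comparing with Definition \ref{D510}, one has $B_5(\vec{x})=\pi^{-4}S(b,e-b)\bigl[S(b-a,c-a,d-a)+S(e-2a,f-a,g-a)\bigr]$, so the task is to massage the bracketed difference into $S(b-a,c-a,d-a)+S(e-2a,f-a,g-a)$ times $S(b,e-b)$. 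I expect this to require an application of Lemma \ref{triglemma}(b) (which is precisely the identity producing $S(b-a,c-a,d-a)+S(e-2a,f-a,g-a)$ from a difference of two four-fold sine products), possibly preceded by an application of Lemma \ref{triglemma}(a) to rearrange arguments, exactly as in the proofs of \eqref{Orbit4KKL} and \eqref{Orbit2KLL}. One then divides through by the spurious overall sine factor (here $S(g-a)$ or $S(f-g)$ or similar, whichever emerges from the elimination) to arrive at \eqref{KKK224}.

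The main obstacle will be bookkeeping: tracking how a single change of variable $\rho$ transforms \emph{every} argument in a relation whose coefficients already contain five or six gamma functions and a sine, and verifying that the two transformed $L$-functions really do coincide as elements of $G_L\backslash M$ (not merely as formally similar parameter tuples). One must use the invariances of $K$ under $G_K$ and of $L$ under $G_L$ to bring the transformed functions into the canonical representatives $\ser{K}{p_0},\ser{K}{p_1},\ser{K}{n_4}$ and the canonical $L$ used for the elimination; a wrong coset choice produces a valid but differently-indexed relation, so some trial is involved. Once the correct $\rho$'s are fixed, the remaining work is the trigonometric simplification, which is routine given Lemma \ref{triglemma}, together with a final check that the resulting coefficients have the length $2(2+2+4)-6=10$ and widths $1,1,2$ claimed for Type $224$ relations—i.e., that the coefficients of $\ser{K}{p_0}$ and $\ser{K}{n_4}$ are each a sine times two $A$-functions (length $2+4+4=10$, width $1$) and that the coefficient of $\ser{K}{p_1}$ is $B_5(\vec{x})$ (length $10$, width $2$), consistently with parts (d) and (e) of Theorem \ref{bigthm}.
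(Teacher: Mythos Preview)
Your bootstrapping strategy is plausible and consistent with how most other propositions in this section are proved, but it is not the route the paper actually takes here. The paper's proof is a direct citation: it starts from equation~(7.4) of \cite{FGS}, divides through by $\pi^3\Gamma(b)$, adjusts for the renormalization of $K(\vec{x})$ described in Remark~\ref{R210}, and then applies Lemma~\ref{triglemma}(b) once to collapse the bracketed quotient
\[
\frac{S(b,e-b)}{S(c-b)}\bigl[S(a-c,e-a-b,f-b,g-b)-S(a-b,e-a-c,f-c,g-c)\bigr]
\]
into $-B_5(\vec{x})$. No elimination of an intermediate $L$-function, and no change of variable $\rho\in M$, is involved.

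Your approach would yield a self-contained derivation independent of \cite{FGS}, which is a genuine advantage; indeed the paper itself adopts exactly this philosophy later for the Hamming type~$246$ relation \eqref{KKK246}, remarking that although it could be deduced from \cite{FGS}, a direct bootstrapped proof is preferable. The cost is the bookkeeping you describe: you must exhibit concrete $\rho$'s that align the $L$-cosets while keeping the $K$-cosets inside $\{p_0,p_1,n_4\}$, and you have not actually done so. Until those $\rho$'s are written down and checked, your argument remains a sketch rather than a proof. If you pursue it, the cleanest pairing is probably the Orbit~$1$ $(L,K,K)$ relation \eqref{Orbit1KKL} (supported on $p_0,p_1,L_3$) together with a transform of the Orbit~$4$ relation \eqref{Orbit4KKL} landing on $L_3$ and two of $\{p_0,p_1,n_4\}$; the final simplification via Lemma~\ref{triglemma}(b) will then proceed exactly as you anticipate.
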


\begin{proof}  If we divide equation (7.4) in \cite{FGS} through by $\pi^3\Gamma(b)$, and recall that the function $K(\vec{x})$ defined there is $\pi/\gg{1-a}$ times the function $K(\vec{x})$ appearing here, then we get (in terms of the present normalization of $K(\vec{x})$):

\begin{align}\label{KKK224_2}& S(e-a-b)\Bb{1-a}{e-a}{f-a}{g-a} \nr\times\,& \Bb{b}{1+b-e}{1+b-f}{1+b-g}K_{p_0}(\vec{x})\nr+\,&  \frac{S(b,e-b)}{ \pi^4 S(c-b)}\biggl[\genfrac{} {}{0pt}{}{\sin\pi(a-c)\sin\pi(e-a-b)\sin\pi(f-b)\sin\pi(g-b)}{-
\sin\pi(a-b)\sin\pi(e-a-c)\sin\pi(f-c)\sin\pi(g-c) }\biggr] \nr\cdot\,&K_{p_1}(\vec{x})
\nr+\,& S(b-a) \Bb{1+a-e}{1+b-e} {g-c}{g-d}\Bb{a}{b}{f-c}{f-d}K_{n_4}(\vec{x})\nr =
\,& 0.\end{align}But by Lemma \ref{triglemma}(b), \begin{align*}&  \frac{S(b,e-b)}{ \pi^4 S(c-b)}\biggl[\genfrac{} {}{0pt}{}{\sin\pi(a-c)\sin\pi(e-a-b)\sin\pi(f-b)\sin\pi(g-b)}{-
\sin\pi(a-b)\sin\pi(e-a-c)\sin\pi(f-c)\sin\pi(g-c) }\biggr]\\=-\,&   \frac{S(b,e-b)}{ \pi^4 }\bigl[S(b-a,c-a,d-a)+S(e-2a,f-a,g-a)\bigr] =-B_5(\vec{x}), \end{align*}so (\ref{KKK224_2}) yields (\ref{KKK224}).
\end{proof}
 
\subsection{Type $244$ relations}  

There are five relations of Type $244$: the Orbits 3 and 4 $(K,L,L)$ relations, the Orbits 2 and 6 $(L,K,K)$ relations, and the Hamming type $244$ $(K,K,K)$ relation. In any such relation, the coefficient of any $K$ or $L$ function  has length $2(2+4+4)-6=14$.

\begin{Proposition}  We have the Orbit $3$  $(K,L,L)$ relation   \begin{align}\label{Orbit3KLL}
&  A(1 - a, e-a,f-a,g-a)B_3(\vec{x})   K_{p_0}(\vec{x})   \nr+\,&S(e-a)  A(f-a,f-b,f-c ,f-d) \nr\times\,&A(g-a, g-b,g-c,g-d)A({a, b, c, d}) \ser{L}{\overline6} \nr -\,&  A(e-a,e-b,e-c ,e-d)  B_4(\vec{x})\ser{L}{6}
=0. 
\end{align}\end{Proposition}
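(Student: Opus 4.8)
The plan is to obtain (\ref{Orbit3KLL}) in the same manner as the other Type $244$ relations above: by eliminating a common summand from two relations already in hand. The Orbit $1$ $(K,L,L)$ relation (\ref{Orbit1KLL}) involves $K_{p_0}(\vec{x})$, $L_6(\vec{x})$, and $L_5(\vec{x})$, while the Orbit $2$ $(K,L,L)$ relation (\ref{Orbit2KLL}) involves $K_{p_0}(\vec{x})$, $L_{\overline{6}}(\vec{x})$, and $L_5(\vec{x})$; since both already feature $K_{p_0}(\vec{x})$ and $L_5(\vec{x})$, no preliminary change of variable is needed. First I would form $B_4(\vec{x})$ times (\ref{Orbit1KLL}) minus $S(e-a)A(f-a,f-b,f-c,f-d)$ times (\ref{Orbit2KLL}). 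This cancels $L_5(\vec{x})$ and leaves a three-term relation among $K_{p_0}(\vec{x})$, $L_6(\vec{x})$, and $L_{\overline{6}}(\vec{x})$; since $\{p_0,6,\overline{6}\}$ represents Orbit $3$ by Proposition \ref{P430}, this is the relation sought.

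Next I would tidy up the coefficients. The identity $\Gamma(s)\Gamma(1-s)=\pi/\sinpi{s}$ gives
$$A(f-a,f-b,f-c,f-d)\,A(1+a-f,1+b-f,1+c-f,1+d-f)=\frac{1}{\pi^4}S(f-a,f-b,f-c,f-d),$$
so that in the combination above the coefficient of $L_{\overline{6}}(\vec{x})$ is $-S(f-a,e-a)A(a,b,c,d)A(f-a,f-b,f-c,f-d)A(g-a,g-b,g-c,g-d)$ and that of $L_6(\vec{x})$ is $S(f-a)A(e-a,e-b,e-c,e-d)B_4(\vec{x})$. Dividing the whole relation through by $-S(f-a)$ now reproduces exactly the asserted coefficients of $L_{\overline{6}}(\vec{x})$ and $L_6(\vec{x})$, while the coefficient of $K_{p_0}(\vec{x})$ becomes $-A(1-a,e-a,f-a,g-a)/S(f-a)$ times
$$Q:=S(f-e)B_4(\vec{x})-\frac{1}{\pi^4}S(e-a,g,f-a,f-b,f-c,f-d).$$

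The crux, and the one genuinely delicate point, is to show that $Q=-S(f-a)B_3(\vec{x})$. For this I would reuse the identity
$$S(f-e)B_4(\vec{x})=\frac{S(f-e,f-a,g-a)\bigl[S(g,f-b,f-c,f-d)-S(f,g-b,g-c,g-d)\bigr]}{\pi^4 S(f-g)},$$
which follows from the definition of $B_4$ together with Lemma \ref{triglemma}(b) and was established inside the proof of (\ref{Orbit2KLL}). Substituting it into $Q$, clearing the denominator $\pi^4 S(f-g)$, and extracting the common factor $S(g,f-a,f-b,f-c,f-d)$ from the first and third resulting summands reduces $\pi^4 S(f-g)\,Q$ to
$$S(g,f-a,f-b,f-c,f-d)\bigl[S(f-e,g-a)-S(f-g,e-a)\bigr]-S(f,f-e,f-a,g-a,g-b,g-c,g-d).$$
Lemma \ref{triglemma}(a), applied with $(p,q,r,s)=(f,e,g,a)$, gives $S(f-e,g-a)-S(f-g,e-a)=S(g-e,f-a)$; substituting this and pulling one factor of $\sinpi{(f-a)}$ out of both surviving summands turns the display into $-S(f-a)\bigl[S(f,f-e,g-a,g-b,g-c,g-d)-S(g,g-e,f-a,f-b,f-c,f-d)\bigr]$, which equals $-S(f-a)\cdot\pi^4 S(f-g)\,B_3(\vec{x})$ by the definition of $B_3$. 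Dividing by $\pi^4 S(f-g)$ yields $Q=-S(f-a)B_3(\vec{x})$, whence the coefficient of $K_{p_0}(\vec{x})$ is $A(1-a,e-a,f-a,g-a)B_3(\vec{x})$, completing (\ref{Orbit3KLL}). The main obstacle is precisely this last trigonometric reduction — in particular, tracking the lone extra factor of $\sinpi{(f-a)}$ (and a sign) that surfaces there and then cancels against the $-S(f-a)$ we divided by; everything else is the routine gamma/sine bookkeeping common to all these derivations.
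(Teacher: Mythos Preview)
Your argument is correct. The overall strategy---eliminate $L_5(\vec{x})$ from two already-established relations sharing that term, then reduce the $K_{p_0}$ coefficient via Lemma~\ref{triglemma}---is the paper's as well, but the paper pairs (\ref{Orbit1KLL}) with the $L$-incoherent $(L,L,L)$ relation (\ref{incoherLLL}) rather than with the Orbit~2 relation (\ref{Orbit2KLL}). Since (\ref{Orbit2KLL}) was itself obtained from (\ref{Orbit1KLL}) and (\ref{incoherLLL}), the two routes are linearly equivalent; your choice has the mild advantage that the $L_{\overline{6}}$ and $L_6$ coefficients fall out immediately after dividing by $-S(f-a)$, and that the key trigonometric identity you need (the expression for $S(f-e)B_4(\vec{x})$ as a quotient over $S(f-g)$) is one already worked out inside the proof of (\ref{Orbit2KLL}), so you can quote it rather than rederive it. The paper's pairing, by contrast, produces the $K_{p_0}$ coefficient $B_3(\vec{x})$ essentially for free (it comes straight from (\ref{incoherLLL})) and pushes the nontrivial simplification onto the $L_6$ coefficient instead. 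Either way the work is the same Lemma~\ref{triglemma}(a)--(b) bookkeeping, just distributed differently.
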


\begin{proof} We form a linear combination of our Orbit $1$ $(K,L,L)$ relation (\ref{Orbit1KLL}) and our $L$-incoherent $(L,L,L)$ relation (\ref{incoherLLL}), to eliminate $\ser{L}{5}$.   We get
\begin{align}\label{Orbit3KLL_2}
 & S(f-e)A({ 1 - a,e-a, f-a, g-a }) B_3(\vec{x}) \ser{K}{p_0}\nr-\,& S(e-a,e-f)A({f - a, f - b, f - c, f - d })   \nr \times\,&   \Bb{a}{b}{c}{d} A({g - a, g - b, g - c, g - d})\ser{L}{\overline6}
 \nr+\,&A({e - a, e - b, e - c, e - d })\nr\times\,&\bigl[ S(f-a)B_3(\vec{x}) -S(e-a,g)A({f - a, f - b, f - c, f - d })
\nr\times\,& 
  \Bb{1+a-f}{1+b-f}{1+c-f}{1+d-f} \bigr]  \ser{L}{6}  =0. 
\end{align}But the term in square brackets, in (\ref{Orbit3KLL_2}), equals 
\begin{align*}&\frac{1 }{\pi^4S(f-g)}    \left[\begin{matrix} S(f-a,f,f-e,g-a,g-b,g-c,g-d) \\ -S(f-a,g,g-e,f-a,f-b,f-c,f-d)\\ -S(f-g,e-a,g,f-a,f-b,f-c,f-d) \end{matrix} \right]
\\=\,&\frac{1 }{\pi^4S(f-g)}    \left[\begin{matrix} S(f-a,f,f-e,g-a,g-b,g-c,g-d) \\ -S( g, f-a,f-b,f-c,f-d)\\ \times [S(f-a,g-e  )+S(f-g,e-a  ) \bigr]\end{matrix} \right]
\\=\,&\frac{1 }{\pi^4S(f-g)}    \left[\begin{matrix} S(f-a,f,f-e,g-a,g-b,g-c,g-d) \\ -S(f-e,g-a, g, f-a,f-b,f-c,f-d) \end{matrix} \right]
\\=\,&\frac{S(f-e,f-a,g-a) }{\pi^4S(f-g)}    \left[\begin{matrix} S( f ,g-b,g-c,g-d) \\ -S( g,f-b,f-c,f-d) \end{matrix} \right]
\\=\,&-\frac{S(f-e,f-a,g-a) }{\pi^4 }    \left[\begin{matrix} S( b,c,d) \\ +S( e-a,f,g) \end{matrix} \right]=-S(f-e)B_4(\vec{x}),\end{align*}where we have used parts (a) and (b) of Lemma \ref{triglemma}.  So dividing (\ref{Orbit3KLL_2}) through by $S(f-e)$ yields (\ref{Orbit3KLL}).
\end{proof}

\begin{Proposition}  We have the Orbit $4$ $(K,L,L)$ relation\begin{align}\label{Orbit4KLL}
&S(d-c)A(1 - a, e - a, f - a, g - a) \nr\times\,&A(1-a,1-b,1-c,1-d)A({b, 1 + b - e, 1 + b - f, 1 + b - g} )\ser{K}{p_0} \nr- \,& A(1 - c, e - c, f - c, g - c )B_1(a,b,d,c,e,f,g) \ser{L}{\overline{1}}   \nr+ \,& A(1 - d, e - d, f - d, g - d )B_1(\vec{x})\ser{L}{\overline{2}}    
=0. 
\end{align}\end{Proposition}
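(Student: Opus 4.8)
The plan is to derive this Orbit $4$ $(K,L,L)$ relation by the same bootstrapping strategy used for the other relations in this subsection: start from a relation already established, apply a carefully chosen change of variable $\vec{x}\mapsto\rho\vec{x}$ (an element of $M$), and then combine the result with another known relation to eliminate an unwanted $K$ or $L$ function. The natural candidates to combine are the Orbit $3$ $(L,K,K)$ relation (\ref{Orbit3KKL}) and a suitable transform of it. Indeed, (\ref{Orbit3KKL}) relates $K_{p_0}$, $K_{p_1}$, and $L_{\overline{1}}$; to produce a relation involving $L_{\overline{1}}$ and $L_{\overline{2}}$, I would apply to (\ref{Orbit3KKL}) the transposition interchanging $c$ and $d$ (i.e. the generator $s_1=(34)$, which fixes $K_{p_0}$ and $K_{p_1}$ since $K$ is symmetric in $b,c,d$) to get a companion relation among $K_{p_0}$, $K_{p_1}$, and $L_{\overline{2}}$. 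Subtracting one from the other eliminates $K_{p_1}$ and yields a three-term relation among $K_{p_0}$, $L_{\overline{1}}$, and $L_{\overline{2}}$.

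The key steps, in order, are: first, write out (\ref{Orbit3KKL}) explicitly, noting that under $c\leftrightarrow d$ the coefficient of $K_{p_0}$ becomes $S(d)A(1-a,e-a,f-a,g-a)A(b,1+b-e,1+b-f,1+b-g)$, the coefficient of $L_{\overline{1}}$ becomes (up to relabeling) the coefficient of $L_{\overline{2}}$ with $B_1(a,b,d,c,e,f,g)$ in place of $B_1(\vec{x})$, and the $K_{p_1}$ coefficient $B_1(\vec{x})$ becomes $B_1(a,b,d,c,e,f,g)$; second, form the linear combination that cancels $K_{p_1}$ — this requires multiplying the original relation by $B_1(a,b,d,c,e,f,g)$ and the transformed one by $B_1(\vec{x})$ (or rather, since the $K_{p_1}$ coefficients are $-B_1(\vec{x})$ and $-B_1(a,b,d,c,e,f,g)$ respectively, cross-multiplying appropriately); third, simplify the resulting coefficient of $K_{p_0}$, which will be a combination of the form $S(c)B_1(a,b,d,c,e,f,g)-S(d)B_1(\vec{x})$ times the common $A$-factors, and show this collapses to $S(d-c)$ times $A(1-a,1-b,1-c,1-d)$ times the remaining $A$-factors; fourth, identify the $L_{\overline{1}}$ and $L_{\overline{2}}$ coefficients and apply $\Gamma(s)\Gamma(1-s)=\pi/\sin\pi s$ to bring them into the stated form.

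The main obstacle I anticipate is step three: the simplification of the $K_{p_0}$ coefficient. Expanding $S(c)B_1(a,b,d,c,e,f,g)-S(d)B_1(\vec{x})$ using the definition $B_1(\vec{x})=\pi^{-4}S(b,c)[S(b-a,c-a,d)+S(e-a,f-a,g-a)]$, one gets $\pi^{-4}S(b)$ times $$S(c,d)\bigl[S(b-a,d-a,c)-S(b-a,c-a,d)\bigr]+S(c,d)\bigl[S(e-a,f-a,g-a)-S(e-a,f-a,g-a)\bigr],$$ wait — the second bracket vanishes identically only if the $S(e-a,f-a,g-a)$ terms are genuinely symmetric under $c\leftrightarrow d$, which they are (they involve none of $c,d$ asymmetrically beyond the overall $S(c)$ versus $S(d)$ prefactor already pulled out); so the whole thing reduces to a single $S(b-a,c-a,d-a)$-type term. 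More precisely, after pulling out common factors one needs the identity $S(c)S(d-a)-S(d)S(c-a)$, which by Lemma \ref{triglemma}(a) (with appropriate choices of $p,q,r,s$) equals $\pm S(d-c)S(a)$ or $\pm S(d-c)S(\text{something})$; tracking the exact form and sign, and confirming the leftover combines with the $S(e-a,f-a,g-a)$ piece to produce exactly $S(d-c)A(1-a,1-b,1-c,1-d)^{-1}$-type expression after converting back via the gamma reflection formula, is the delicate bookkeeping. I expect the factor $A(1-a,1-b,1-c,1-d)$ to emerge precisely because $\pi^{-4}S(a,b,c,d)=1/[\Gamma(a)\Gamma(1-a)\cdots\Gamma(d)\Gamma(1-d)]$ combines with $A(a,b,c,d)=1/[\Gamma(a)\Gamma(b)\Gamma(c)\Gamma(d)]$ to give $A(1-a,1-b,1-c,1-d)$. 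Once this coefficient simplification is done, dividing through by the appropriate common factor and invoking the reflection formula on the $L$-coefficients finishes the proof.
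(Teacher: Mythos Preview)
Your approach is essentially the same as the paper's: take the Orbit~3 $(L,K,K)$ relation~(\ref{Orbit3KKL}) and its image under the transposition $c\leftrightarrow d$, form the linear combination that eliminates $K_{p_1}$, then simplify the resulting $K_{p_0}$ coefficient using Lemma~\ref{triglemma}(a) and the reflection formula to produce the factor $S(d-c)A(1-a,1-b,1-c,1-d)$. Your anticipated ``obstacle'' is exactly the computation the paper carries out (the $S(e-a,f-a,g-a)$ terms do cancel, leaving $\pi^{-4}S(b,c,d,b-a)[S(d-a,c)-S(c-a,d)]=\pi^{-4}S(a,b,c,d,b-a,d-c)$), and after dividing through by $S(b-a)A(a,b,c,d)$ the $L_{\overline{1}}$ and $L_{\overline{2}}$ coefficients fall out directly without further reflection-formula manipulation.
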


\begin{proof} From our Orbit $3$ $(L,K,K)$ relation (\ref{Orbit3KKL}) and its image under the transposition of $c$ and $d$, we form a linear combination to eliminate $\ser{K}{p_1}$.  We get \begin{align}\label{Orbit4KLL_2}
&A(1 - a, e - a, f - a, g - a) A({b, 1 + b - e, 1 + b - f, 1 + b - g} )\nr\times\,&\bigl[ S(c) B_1(a,b,d,c,e,f,g)  
-S(d) B_1(\vec{x})\bigr] \ser{K}{p_0} \nr- \,&S(b-a)A  ({a,b,c,d } )A(1 - c, e - c, f - c, g - c )B_1(a,b,d,c,e,f,g) \ser{L}{\overline{1}}   \nr+ \,&S(b-a)A  ({a,b,c,d } )A(1 - d, e - d, f - d, g - d )B_1(\vec{x})\ser{L}{\overline{2}}    
=0. 
\end{align}
But the quantity in square brackets, in (\ref{Orbit4KLL_2}), equals 
\begin{align*}&\frac{S(b,c,d)} {\pi^{4}}\biggl[\genfrac{} {}{0pt}{}{S( b-a,c,d-a)  +S(e-a,f-a,g-a)}{ -S( b-a,d,c-a)  -S(e-a,f-a,g-a)}\biggr] \\=\,&\frac{S(b,c,d,b-a)\bigl[S(d-a,c)-S(  c-a,d)\bigr]  } {\pi^{4}}=\frac{S(a,b,c,d,b-a,d-c)  } {\pi^{4}} \end{align*}by Lemma (\ref{triglemma}), so dividing (\ref{Orbit4KLL_2}) through by $S(b-a)A(a,b,c,d)$ yields (\ref{Orbit4KLL}).
\end{proof}

\begin{Proposition}  We have the Orbit $2$ $(L,K,K)$ relation   
  \begin{align}\label{Orbit2KKL}
&   A({ 1 - a,e-a, f-a, g-a }) B_4(b,a,c,d,e,f,g)  \ser{K}{p_0}  \nr-\,&    A({ 1 - b,e-b, f-b, g-b }) B_4(\vec{x}) \ser{K}{p_1}\nr-\, & S(b-a)A({f - a, f - b, f - c, f - d}) A({g - a, g - b, g - c, g - d})\nr\times\,&\Bb{a}{b}{c}{d}  \ser{L}{\overline6}  =0. 
\end{align}\end{Proposition}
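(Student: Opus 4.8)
The plan is to derive \eqref{Orbit2KKL} from the Orbit~$2$ $(K,L,L)$ relation \eqref{Orbit2KLL} by a single change of variable followed by one elimination. First I would apply to \eqref{Orbit2KLL} the transposition $\vec{x}\mapsto(b,a,c,d,e,f,g)^T$ of the first two coordinates, which is the action of $s_{1'}=(12)\in M$. Since $K(a;b,c,d;e,f,g)$ is symmetric in $b,c,d$, this substitution carries $K_{p_0}(\vec{x})$ to $K_{p_1}(\vec{x})$; and since $L(a,b,c,d;e;f,g)$ is symmetric in its first four arguments, it fixes both $L_{\overline{6}}(\vec{x})$ and $L_5(\vec{x})$. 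We thus obtain a companion three-term relation, now among $K_{p_1}(\vec{x})$, $L_{\overline{6}}(\vec{x})$, and $L_5(\vec{x})$, whose coefficients are those of \eqref{Orbit2KLL} with $a$ and $b$ interchanged; in particular, the coefficient of $L_5(\vec{x})$ in the companion relation is $-B_4(b,a,c,d,e,f,g)$, while in \eqref{Orbit2KLL} it is $-B_4(\vec{x})$.

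Next I would eliminate $L_5(\vec{x})$ by forming $B_4(b,a,c,d,e,f,g)$ times \eqref{Orbit2KLL} minus $B_4(\vec{x})$ times the companion relation. In the resulting identity the coefficients of $K_{p_0}(\vec{x})$ and $K_{p_1}(\vec{x})$ each acquire the common factor $S(g)A(1+a-f,1+b-f,1+c-f,1+d-f)$, while the coefficient of $L_{\overline{6}}(\vec{x})$ is
$$A(a,b,c,d)A(g-a,g-b,g-c,g-d)\bigl[B_4(b,a,c,d,e,f,g)S(f-a)-B_4(\vec{x})S(f-b)\bigr].$$
Dividing through by $S(g)A(1+a-f,1+b-f,1+c-f,1+d-f)$ then leaves $B_4(b,a,c,d,e,f,g)A(1-a,e-a,f-a,g-a)$ as the coefficient of $K_{p_0}(\vec{x})$ and $-B_4(\vec{x})A(1-b,e-b,f-b,g-b)$ as the coefficient of $K_{p_1}(\vec{x})$, exactly as in \eqref{Orbit2KKL}.

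The one substantial step, which I expect to be the main obstacle, is simplifying the bracketed factor in the coefficient of $L_{\overline{6}}(\vec{x})$. After writing out $B_4$ via Definition~\ref{D510}(b) and pulling $S(f-a,f-b)$ out of the difference, the task reduces to showing
$$S(c,d)\bigl[S(g-b,a)-S(g-a,b)\bigr]+S(f,g)\bigl[S(g-b,e-b)-S(g-a,e-a)\bigr]=-S(b-a,g)S(f-c,f-d).$$
Lemma~\ref{triglemma}(a) gives $S(g-b,a)-S(g-a,b)=-S(b-a,g)$ and $S(g-b,e-b)-S(g-a,e-a)=-S(b-a,e+g-a-b)$; the Saalsch\"utzian condition $e+f+g-a-b-c-d=1$ lets one replace $\sin\pi(e+g-a-b)$ by $-\sin\pi(c+d-f)$, and then a final application of Lemma~\ref{triglemma}(a), in the form $S(c,d)-S(f,c+d-f)=S(f-c,f-d)$, completes the identity. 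Combining this with $\Gamma(s)\Gamma(1-s)=\pi/\sin\pi s$, used to rewrite $\pi^{-4}S(f-a,f-b,f-c,f-d)/A(1+a-f,1+b-f,1+c-f,1+d-f)$ as $A(f-a,f-b,f-c,f-d)$, together with $S(b-a,g)/S(g)=S(b-a)$, shows that the coefficient of $L_{\overline{6}}(\vec{x})$ equals $-S(b-a)A(f-a,f-b,f-c,f-d)A(g-a,g-b,g-c,g-d)A(a,b,c,d)$, which is \eqref{Orbit2KKL}. Everything else is routine bookkeeping.
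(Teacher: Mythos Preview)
Your proposal is correct and follows essentially the same route as the paper's own proof: transpose $a$ and $b$ in the Orbit~$2$ $(K,L,L)$ relation \eqref{Orbit2KLL}, eliminate $L_5(\vec{x})$ from the resulting pair, and simplify the coefficient of $L_{\overline{6}}(\vec{x})$ via repeated use of Lemma~\ref{triglemma}(a) together with the Saalsch\"utzian condition. Your trigonometric reduction is laid out somewhat more explicitly than the paper's, but the argument is the same.
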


\begin{proof}Transposing $a$ and $b$ in our Orbit $2$ $(K,L,L)$ relation (\ref{Orbit2KLL}) yields a relation among $\ser{K}{p_1}$, $\ser{L}{\overline6}$, and $\ser{L}{5}$.  An appropriate combination of this relation with (\ref{Orbit2KLL}) effects the elimination of $\ser{L}{5}$, thus:
   \begin{align}\label{Orbit2KKL_2}
& S(g) \Bb{1+a-f}{1+b-f}{1+c-f}{1+d-f}\nr\times\,&  A({ 1 - a,e-a, f-a, g-a }) B_4(b,a,c,d,e,f,g)  \ser{K}{p_0}  \nr-\,& S(g) \Bb{1+a-f}{1+b-f}{1+c-f}{1+d-f}\nr\times\,&  A({ 1 - b,e-b, f-b, g-b }) B_4(\vec{x}) \ser{K}{p_1}\nr-\, & \Bb{a}{b}{c}{d} A({g - a, g - b, g - c, g - d})\nr\times\,&\bigl[S(f-b)B_4(\vec{x})+S(f-a)  B_4(b,a,c,d,e,f,g) \bigr]\ser{L}{\overline6}  =0. 
\end{align}But the quantity in square brackets, in  (\ref{Orbit2KKL_2}), equals
\begin{align}\label{Orbit2KKL_3}&\frac{1}{\pi^4}\biggl[\genfrac{} {}{0pt}{}{S(f-b, f-a,g-a)\bigl[{S(b,c,d) +S(e-a,f,g)}\bigr]}{-S(f-a,f-b,g-b)\bigl[{S(a,c,d) +S(e-b,f,g)}\bigr] }\biggr]
\nr=\,&\frac{S(f-a,f-b)}{\pi^4}\biggl[\genfrac{} {}{0pt}{}{S(c,d) \bigl[{S(g-a,b)-S(g-b,a)}\bigr]}{+S(f,g) \bigl[{S(g-a,e-a) +S(g-b,e-b)}\bigr] }\biggr]
\nr=\,&{\pi^{-4}}{S(f-a,f-b)}\bigl[{S(c,d,b-a,g)  +S(f,g,b-a,f-c-d)   }\bigr]
\nr=\,&{\pi^{-4}}{S(f-a,f-b,g,b-a)} \bigl[{S(c,d ) +S(f,f-c-d)   }\bigr]
\nr=\,&{\pi^{-4}}{S(f-a,f-b,f-c,f-d,g,b-a)} . \end{align}So dividing (\ref{Orbit2KKL_2}) through by $ S(g) \Bb{1+a-f}{1+b-f}{1+c-f}{1+d-f}$ yields (\ref {Orbit2KKL}).\end{proof} 

\begin{Proposition}  We have the Orbit $6$ $(L,K,K)$ relation  
 \begin{align}\label{Orbit6KKL}&S(a) A({1 + a - f, 1 + b - f, 1 + c - f, 1 + d - f }) \nr\times\,&  A(1 + a - g, 1 + b - g, 1 + c - g, 1 + d - g )A(1 - a, e - a, f - a, g - a) \nr\times\,& \ser{K}{p_0}  \nr
-&A(a,1+a-e,1+a-f,1+a-g) B_4({\vec{x}}) K_{n_4}(\vec{x})  \nr-\,& A(a,b,c,d)   B_2(\vec{x})  \ser{L}{\overline6}
=0. 
\end{align}\end{Proposition}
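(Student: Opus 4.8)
The plan is to obtain relation (\ref{Orbit6KKL})---which relates $\ser{K}{p_0}$, $\ser{K}{n_4}$, and $\ser{L}{\overline6}$---by eliminating $\ser{K}{p_1}$ between two relations already in hand: the Orbit $2$ $(L,K,K)$ relation (\ref{Orbit2KKL}), which involves $\ser{K}{p_0}$, $\ser{K}{p_1}$, and $\ser{L}{\overline6}$; and the Hamming type $224$ $(K,K,K)$ relation (\ref{KKK224}), which involves $\ser{K}{p_0}$, $\ser{K}{n_4}$, and $\ser{K}{p_1}$. Since the coefficient of $\ser{K}{p_1}$ is $-A(1-b,e-b,f-b,g-b)B_4(\vec{x})$ in the former and $-B_5(\vec{x})$ in the latter, I would multiply (\ref{Orbit2KKL}) by $B_5(\vec{x})$ and (\ref{KKK224}) by $A(1-b,e-b,f-b,g-b)B_4(\vec{x})$ and subtract; this cancels $\ser{K}{p_1}$ and leaves a three-term relation among $\ser{K}{p_0}$, $\ser{K}{n_4}$, and $\ser{L}{\overline6}$.

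The second step is to bring the three resulting coefficients into the forms displayed in (\ref{Orbit6KKL}). Two of them reduce at once. The reflection formula $\gg{s}\gg{1-s}=\pi/\sin\pi s$ yields $$A(1-b,e-b,f-b,g-b)A(b,1+b-e,1+b-f,1+b-g)=\pi^{-4}S(b,e-b,f-b,g-b),$$ and, because $B_5(\vec{x})$ and $B_2(\vec{x})$ share the factor $S(b-a,c-a,d-a)+S(e-2a,f-a,g-a)$, we have $B_5(\vec{x})/B_2(\vec{x})=S(b,e-b)/S(f-a,g-a)$. With these, the coefficient of $\ser{L}{\overline6}$ becomes a scalar multiple of $A(a,b,c,d)B_2(\vec{x})$ and the coefficient of $\ser{K}{n_4}$ becomes the same scalar times $A(a,1+a-e,1+a-f,1+a-g)B_4(\vec{x})$, the scalar being (again by the reflection formula) $$D=\frac{S(b-a,b,e-b)}{S(f-a,g-a)}\,A(f-a,f-b,f-c,f-d)\,A(g-a,g-b,g-c,g-d).$$ Dividing the whole relation through by $D$ then puts those two terms in the stated shape (one checks the signs come out as in (\ref{Orbit6KKL})).

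The bulk of the work is the coefficient of $\ser{K}{p_0}$, which after the above simplifications equals $A(1-a,e-a,f-a,g-a)$ times $$B_5(\vec{x})\,B_4(b,a,c,d,e,f,g)-\pi^{-4}S(b,e-b,f-b,g-b)\,S(e-a-b)\,B_4(\vec{x}).$$ Here I would expand $B_4$ and $B_5$ into their defining sine products, clear powers of $\pi$, and apply parts (a) and (b) of Lemma \ref{triglemma} repeatedly---in the style of the proofs of (\ref{Orbit2KLL}) and (\ref{Orbit3KLL}) above---to collapse the resulting sum of (at most six) products of six sines into the single product $\pi^{-8}S(a,b-a,b,e-b,f-b,f-c,f-d,g-b,g-c,g-d)$; dividing by $D$ then leaves $S(a)A(1+a-f,1+b-f,1+c-f,1+d-f)A(1+a-g,1+b-g,1+c-g,1+d-g)$, as required. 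I expect this last collapse---showing that the wide combination of sine products telescopes, via Lemma \ref{triglemma}, to one term---to be the only real obstacle; everything else is routine bookkeeping with the reflection formula, and the orbit count in part (a) of Theorem \ref{bigthm} together with Remark \ref{R510} then guarantees that the relation so produced is a representative of the Orbit $6$ $(L,K,K)$ family, of type $244$ and hence of length $14$, consistent with parts (d) and (e).
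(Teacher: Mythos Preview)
Your strategy is correct but takes a different route from the paper. The paper transforms the Orbit~$1$ $(K,L,L)$ relation \eqref{Orbit1KLL} by $\vec{x}\mapsto(e-a,e-b,e-c,e-d,e,1+e-f,1+e-g)^T$ to obtain a relation among $K_{n_4}$, $L_{\overline6}$, and $L_4$, then combines this with the Orbit~$4$ $(L,K,K)$ relation \eqref{Orbit4KKL} to eliminate $L_4$; the resulting bracket in front of $K_{n_4}$ simplifies to $S(f-a)B_4(\vec{x})$ by a short application of Lemma~\ref{triglemma}. In contrast, you eliminate $K_{p_1}$ between \eqref{Orbit2KKL} (a type-$244$ relation) and \eqref{KKK224} (a type-$224$ relation). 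Your identification of the common scalar $D$ is right, and your verification that the $K_{n_4}$ and $L_{\overline6}$ coefficients match after division by $D$ checks out. The trade-off is that the paper's ingredients are of lower type ($222$ and $224$), so its trigonometric reduction is short, whereas your $K_{p_0}$ coefficient involves a product $B_5\cdot B_4(b,a,\ldots)$ minus another width-$2$ term, yielding a six-summand expression that must collapse to a single product of ten sines---a noticeably harder identity. It does hold (indeed, once two coefficients match a nonzero multiple of \eqref{Orbit6KKL}, the third is forced, since the space of linear relations among any three distinct $J$'s is one-dimensional), but you should either carry out that collapse or invoke this uniqueness explicitly rather than leave it as ``expected.''
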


\begin{proof}Applying the change of variable$$\vec{x}\to (e-a,e-b,e-c,e-d,e,1+e-f,1+e-g)^T$$ to  our Orbit $1$ $(K,L,L)$ relation (\ref{Orbit1KLL}) yields a relation among $\ser{K}{n_4}$, $\ser{L}{\overline6}$,  and $\ser{L}{4}$.  We combine this with our Orbit $4$ $(L,K,K)$ relation (\ref{Orbit4KKL}) to eliminate $\ser{L}{4}$.   We get
\begin{align}\label{Orbit6KKL_2}& S(a,f-a )A({1+a-f,1+b-f,1+c-f,1+d-f})\nr\times \,&A({1 + a - g, 1 + b - g, 1 + c - g, 1 + d - g })A(1 - a, e - a, f - a, 
  g - a) \nr\times\,& \ser{K}{p_0} \nr
-\, &A({ a,1+a-e,1+a-f, 1+a-g })\bigl[ S(f)B_2(\vec{x})  \nr+\,&   S(a,g-a )A({f-a,f-b,f-c,f-d })    \nr\times\,&A({1+a-f,1+b-f,1+c-f,1+d-f})   \bigr]\ser{K}{n_4}  \nr-\,& S(f-a)A({a,b,c,d})B_2(\vec{x}) \ser{L}{\overline6} 
=0. 
\end{align} But the quantity in square brackets, in (\ref{Orbit6KKL_2}), equals\begin{align}&\frac{S(f-a,g-a)}{\pi^4}\biggl[\genfrac{} {}{0pt}{}{S(f)  \bigl[{S(b-a,c-a,d-a)+S(e-2a,f-a,g-a)}\bigr]}{+S(a,f-b,f-c,f-d)}\biggr]\nr=\,&\frac{S(f-a,g-a)}{\pi^4}\biggl[\genfrac{} {}{0pt}{}{ {S(f,b-a,c-a,d-a)+S(a, f-b,f-c,f-d)}\bigr]}{+S(f,e-2a,f-a,g-a)}\biggr]\nr=\,&\frac{S(f-a,g-a,f-a)}{\pi^4}\biggl[\genfrac{} {}{0pt}{}{S( b,c,d)+S(a,f,g-e)}{+S(f,e-2a, g-a)}\biggr]\nr=\,& \frac{S(f-a,g-a,f-a)}{\pi^4}\biggl[\genfrac{} {}{0pt}{}{S( b,c,d) }{+S(e-a,f,g)}\biggr]= {S(f-a)}B_4(\vec{x}),\end{align}where we have used parts (a) and (b) of Lemma \ref{triglemma}.  So dividing (\ref{Orbit6KKL_2}) through by $S(f-a)$ yields (\ref{Orbit6KKL}).
\end{proof}

\begin{Proposition}  We have the Hamming type $244$ $(K,K,K)$ relation\begin{align}\label{KKK244}& S(f-e)	 \Bb{1-a}{1-b }{1+c-g}{1+d-g}     \nr\times&\Bb{b }{1+b-e}{1+b-f} {1+b-g}A({ 1 - a,e - a, f - a, g - a })K_{p_0}(\vec{x}) 
 \nr-& \Bb{1+a-e}{1+b-e}{f-c}{f-d}  B_5({a,b,c,d, f,e,g})  K_{n_4}(\vec{x})
  \nr+\,& \Bb{1+a-f}{1+b-f}{e-c}{e-d}B_5(\vec{x})  K_{n_8}(\vec{x})= 0.  \end{align}\end{Proposition}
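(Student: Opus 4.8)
The plan is to obtain (\ref{KKK244}) by exactly the route used above for the Hamming type $222$ and $224$ $(K,K,K)$ relations: start from the Hamming type $244$ relation already derived in \cite{FGS} (equation (7.5), in Proposition 7.5 of that paper), rewrite it under the renormalization of $K$ recorded in Remark \ref{R210}, and then compress the coefficients by means of Lemma \ref{triglemma}.

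In more detail, I would first write out equation (7.5) of \cite{FGS}, which is a vanishing linear combination of the \emph{unnormalized} $K$ functions indexed by the cosets $p_0$, $n_4$, and $n_8$ (a set of Hamming type $244$: from Definition \ref{hamming} one has $d(p_0,n_4)=d(p_0,n_8)=4$ and $d(n_4,n_8)=2$). By Remark \ref{R210}, the unnormalized function attached to a coset $\sigma$ equals $\pi/\Gamma(1-\xi_\sigma)$ times the $K_\sigma(\vec{x})$ used here, where $\xi_\sigma$ is the first coordinate of $g\vec{x}$ for $g\in\sigma$; thus $\xi_{p_0}=a$, $\xi_{n_4}=e-a$, and $\xi_{n_8}=f-a$. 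Substituting these three proportionalities into equation (7.5) and dividing the identity through by a suitable monomial in $\pi$ and in values of $\Gamma$ — the monomial chosen so that the surviving reciprocal-gamma factors coalesce into the products $A(\cdot)$ displayed in (\ref{KKK244}) — produces a relation of the shape
$$\gamma_1\,K_{p_0}(\vec{x}) + \gamma_2\,K_{n_4}(\vec{x}) + \gamma_3\,K_{n_8}(\vec{x}) = 0,$$
in which $\gamma_1$ is a product of three functions $A(\cdot)$ with a trigonometric prefactor, while $\gamma_2$ and $\gamma_3$ are each a product of a function $A(\cdot)$ with a two-summand trigonometric expression.

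It would then remain to reduce the trigonometric factors to the compact forms in (\ref{KKK244}): I expect an application of Lemma \ref{triglemma}(a) to collapse the prefactor of $\gamma_1$ to $S(f-e)$, and applications of Lemma \ref{triglemma}(b) to recognize the two-summand factors in $\gamma_2$ and $\gamma_3$ as $B_5(a,b,c,d,f,e,g)$ and $B_5(\vec{x})$, precisely as Lemma \ref{triglemma}(b) was used to identify $-B_5(\vec{x})$ in the proof of (\ref{KKK224}). Once the coefficients are in this form one reads off that the relation has Type $244$, that all three coefficients have length $2(2+4+4)-6=14$, and that their widths are $1$, $2$, $2$, in agreement with Theorem \ref{bigthm}(d) and (e). Alternatively, (\ref{KKK244}) can be produced entirely within the present paper: applying the change of variable $\vec{x}\mapsto(a,b,c,d,f,e,g)^T$ — the generator $(56)$ of $M$, which fixes $K_{p_0}$ and $K_{p_1}$ and carries $K_{n_4}$ to $K_{n_8}$ — to (\ref{KKK224}) gives a companion relation among $K_{p_0}$, $K_{n_8}$, and $K_{p_1}$, and eliminating $K_{p_1}$ between this companion and (\ref{KKK224}) yields (\ref{KKK244}), though at the cost of somewhat more intricate coefficient manipulation. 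In either approach the only real obstacle is this trigonometric bookkeeping — the passage from the raw, many-term sine expressions to the single $S(f-e)$ factor and the two $B_5$ factors.
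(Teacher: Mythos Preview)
Your approach is essentially the paper's own: the paper multiplies equation (7.5) of \cite{FGS} through by $S(b)\Gamma(a)\Gamma(g-c)\Gamma(g-d)/\pi^4$, passes to the present normalization via Remark \ref{R210}, and then applies Lemma \ref{triglemma}(b) to recognize the bracketed coefficients of $K_{n_4}$ and $K_{n_8}$ as $-B_5(a,b,c,d,f,e,g)$ and $-B_5(\vec{x})$. One small correction: the $S(f-e)$ prefactor on $K_{p_0}$ is already present after the renormalization and needs no reduction via Lemma \ref{triglemma}(a).
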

\begin{proof} Multiplying equation (7.5) in \cite{FGS} through by $S(b)\gg{a} \gg{g-c}\gg{g-d}/\pi^4$, and recalling that the function $K(\vec{x})$ defined there is $\pi/\gg{1-a}$ times the one defined here, we get (in terms of the present function $K(\vec{x})$)
\begin{align}&\label{KKK244_2} S(f-e)	 \Bb{1-a}{1-b }{1+c-g}{1+d-g}     \nr\times\,&\Bb{b }{1+b-e}{1+b-f} {1+b-g}A({ 1 - a,e - a, f - a, g - a })K_{p_0}(\vec{x}) \nr+\,&\Bb{1+a-e}{1+b-e}{f-c}{f-d}  \nr\times\, & {\ds\frac{S(b,f-b)}{S(c-b)} \biggl[\genfrac{} {}{0pt}{}{\ds\sin\pi(a-c)\sin\pi(f-a-b)\sin\pi(e-b)\sin\pi(g-b)}{\ds-
\sin\pi(a-b)\sin\pi(f-a-c)\sin\pi(e-c)\sin\pi(g-c) }\biggr] }\nr\,\times&K_{n_4}(\vec{x})
\nr-\,& \Bb{1+a-f}{1+b-f}{e-c}{e-d}\nr\times\,& {\ds\frac{S(b,e-b)}{S(c-b)}  \biggl[\genfrac{} {}{0pt}{}{\ds\sin\pi(a-c)\sin\pi(e-a-b)\sin\pi(f-b)\sin\pi(g-b)}{\ds-
\sin\pi(a-b)\sin\pi(e-a-c)\sin\pi(f-c)\sin\pi(g-c) }\biggr] } \nr\times\,&K_{n_8}(\vec{x})= 0. \end{align}Applying Lemma \ref{triglemma}(b) to the terms in square brackets, in (\ref{KKK244_2}), yields (\ref{KKK244}).
\end{proof}

\subsection{Type $246$ relations}  

There are two relations of Type $246$: the Orbit $7$ $(L,K,K)$ relation and the Hamming type $246$ $(K,K,K)$ relation.   In any such  relation, the coefficient of any $K$ or $L$ function  has length $2(2+4+6)-6=18$.

\begin{Proposition}  We have the Orbit $7$ $(L,K,K)$  relation\begin{align} \label{Orbit7KKL}
&  A(1+a-e, 1 + b - e, 1 + c-e, 1 + d-e)  A(1 - a, e-a,f-a,g-a) \nr\times\,&
B_4(\vec{x}) K_{p_0}(\vec{x}) \nr-\, &S(e-a)A(f-a,f-b,f-c,f-d) A(g-a,g-b,g-c,g-d)    \nr\times\, &A(a, b,c,d)  A({a, 1 + a - e, 1 + a - f, 1 + a - g})K_{n_0}(\vec{x})  \nr-\,&C(\vec{x})\ser{L}{6}
=0. 
\end{align}
\end{Proposition}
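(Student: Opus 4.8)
The plan is to derive this Type $246$ $(L,K,K)$ relation by the same bootstrap strategy used for all the previous propositions: combine two already-established three-term relations so as to eliminate a common $K$ or $L$ function, then simplify the resulting bracketed coefficient using the trigonometric identities in Lemma \ref{triglemma}. Looking at the functions involved---$K_{p_0}$, $K_{n_0}$, and $L_6$---and noting that $\{p_0,n_0\}$ is the Hamming-distance-$6$ pair while $p_0$ and $n_0$ are each at distance $4$ from $6$, the natural building blocks are relations that already contain $L_6$ together with one of $K_{p_0}$, $K_{n_0}$. The Orbit $3$ $(K,L,L)$ relation (\ref{Orbit3KLL}) relates $K_{p_0}$, $L_{\overline 6}$, and $L_6$; applying the central involution $w_0$ (which sends $p_0\mapsto n_0$, $6\mapsto\overline 6$, $\overline 6\mapsto 6$, and acts on parameters by $\vec x\mapsto(1-a,\ldots,2-g)^T$) to (\ref{Orbit3KLL}) produces a companion relation among $K_{n_0}$, $L_6$, and $L_{\overline 6}$. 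Forming the linear combination of these two that eliminates $L_{\overline 6}$ will yield a relation among $K_{p_0}$, $K_{n_0}$, and $L_6$, which after simplification should be (\ref{Orbit7KKL}).

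First I would write down explicitly the $w_0$-image of (\ref{Orbit3KLL}): since $w_0$ is central and $K$, $L$ are (up to the coset labels) covariant under $M$, each coefficient $A(g\vec x)$, $B_3(\vec x)$, $B_4(\vec x)$ gets its argument replaced by $w_0\vec x$, and one records how the $B$-functions and $S$-functions transform under $a\mapsto 1-a$, $e\mapsto 2-e$, etc.---using $\sin\pi(1-x)=\sin\pi x$ freely. Next I would scale the two relations by the appropriate cross-coefficients of $L_{\overline 6}$ and subtract, so that the $L_{\overline 6}$ terms cancel. The coefficients of $K_{p_0}$ and $K_{n_0}$ in the combination will each be a product of the surviving $A$-factors times $B_3$ (or its $w_0$-image) times an $S$-factor; I expect these to collapse, after an application of Lemma \ref{triglemma}(b) (which is precisely the identity relating $B_3$ to $S(f-g)[S(b-a,c-a,d-a)+S(e-2a,f-a,g-a)]$), into the clean forms $A(1+a-e,\ldots)A(1-a,e-a,\ldots)B_4(\vec x)$ and $S(e-a)A(f-a,\ldots)A(g-a,\ldots)A(a,b,c,d)A(a,1+a-e,\ldots)$ shown in (\ref{Orbit7KKL}).

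The main obstacle will be the coefficient of $L_6$: after the elimination it is a two-term bracket (since $B_3$ and its $w_0$-image each have width $2$, and $B_4$ has width $2$) that must be shown to equal the width-$4$ function $C(\vec x)$ from Definition \ref{D510}(b). The identity $C(\vec x)=\pi^{-4}S(e-a)[S(b,c,d)B_2(\vec x)+S(e,f-a,g-a)B_4(\vec x)]$ recorded there, together with the fact that $B_3$ and $B_2$ are linked via Lemma \ref{triglemma}(b), suggests the route: expand the bracket, use Lemma \ref{triglemma}(b) to rewrite the $B_3$-part in terms of $S(f-g)$ times $B_2$-type data, cancel the $S(f-g)$ against the $1/S(f-g)$ in $B_3$, and then repeatedly apply Lemma \ref{triglemma}(a) to merge products of sines---exactly as in the proofs of (\ref{Orbit2KLL}) and (\ref{Orbit3KLL})---until the eight-sine terms of $C(\vec x)$ emerge. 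This is the one genuinely lengthy computation; everything else is bookkeeping.

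Finally I would note that the resulting relation has the structural properties claimed in Theorem \ref{bigthm}: the coefficient of $L_6$ is $C(\vec x)$, of length $18$ and width $4=2^{d(p_0,n_0)/2-1}=2^{6/2-1}$, while the coefficients of $K_{p_0}$ and $K_{n_0}$ have width $2=2^{d(\cdot,6)/2-1}$ with $d(p_0,6)=d(n_0,6)=4$; and all three coefficients have length $2(2+4+6)-6=18$, consistent with type $246$. A remark to this effect, verifying that (\ref{Orbit7KKL}) meets the conclusions of parts (d) and (e) of Theorem \ref{bigthm}, would close the proof.
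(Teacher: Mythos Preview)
Your strategy---combine two Type $244$ relations that both contain $L_6$ and $L_{\overline 6}$, then eliminate $L_{\overline 6}$---is sound and will lead to (\ref{Orbit7KKL}), but it is not the route the paper takes. The paper instead starts from the Orbit~$2$ $(K,L,L)$ relation (\ref{Orbit2KLL}). Applying the central involution $w_0$ to (\ref{Orbit2KLL}) produces a relation among $K_{n_0}$, $L_6$, and $L_{\overline 5}$; transposing $e$ and $f$ in (\ref{Orbit2KLL}) produces a relation among $K_{p_0}$, $L_{\overline 5}$, and $L_6$. The paper then eliminates $L_{\overline 5}$ (not $L_{\overline 6}$). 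The advantage of this choice is that in (\ref{Orbit2KLL}) the coefficients of $K_{p_0}$ and $L_{\overline 6}$ are width~$1$ (products of $S$ and $A$ only), so after elimination the coefficients of $K_{p_0}$ and $K_{n_0}$ come out in the desired form immediately, with no $B_3$ factors to untangle; only the $L_6$ bracket requires a genuine trigonometric reduction to $-S(g)\,C(\vec x)$ via Lemma~\ref{triglemma}.

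Your Orbit~$3$ route works too, but the mechanism for the $K$-coefficients is not Lemma~\ref{triglemma}(b) as you suggest: rather, it is the $w_0$-parity of the $B$-functions. One checks directly that $B_4(w_0\vec x)=B_4(\vec x)$ while $B_3(w_0\vec x)=-B_3(\vec x)$; these identities force the $K_{p_0}$ and $K_{n_0}$ coefficients in your combined relation to share the common factor $-B_3(\vec x)$, and dividing it out gives exactly the target forms. The remaining identity---that $(\gamma_1\gamma_2-\beta_1\beta_2)/B_3(\vec x)=C(\vec x)$---is then a somewhat heavier computation than the paper's, though of course equivalent.

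One small correction to your closing remark: the distances are $d(p_0,6)=2$ and $d(n_0,6)=4$ (the coset $6$ is opposite $n_0$ but not $p_0$), so the coefficient of $K_{n_0}$ has width $2^{d(p_0,6)/2-1}=1$, not $2$, in agreement with (\ref{Orbit7KKL}).
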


\begin{proof}Applying the central involution$$\vec{x}\to(1-a,1-b,1-c,1-d,2-e,2-f,2-g)^T$$to our Orbit $2$ $(K,L,L)$ relation (\ref{Orbit2KLL}) yields a relation among $\ser{K}{n_0}$, $\ser{L}{6}$, and $\ser{L}{\overline5}$.  On the other hand, transposing $e$ and $f$ in (\ref{Orbit2KLL}) yields a relation among $\ser{K}{p_0}$, $\ser{L}{\overline5}$, and $\ser{L}{6}$. Of the two new relations thus formed, we fashion a linear combination   to eliminate $\ser{L}{\overline5}$.  We get
 \begin{align}\label{Orbit7KKL_2}
& S(g) \Bb{1+a-e}{1+b-e}{1+c-e}{1+d-e}\nr\times\,&  A({ 1 - a,e-a, f-a, g-a })B_4(\vec{x})  \ser{K}{p_0}  \nr-\,& S(g,e-a) \Bb{f-a}{f-b}{f-c}{f-d}A({g - a, g - b, g - c, g - d})\nr\times\,&  \Bb{a}{b}{c}{d} A({ a,1+a-e,1+a-f,1+a-g })   \ser{K}{n_0}\nr+\,&\bigl[S(e-a,f-a)\Bb{a}{b}{c}{d}  \Bb{1-a}{1-b}{1-c}{1-d}\nr\times\,&   A({g - a, g - b, g - c, g - d})A({1+a-g,1+b-g,1+c-g,1+d-g}) \nr-\,&B_4(\vec{x})B_4(a,b,c,d,f,e,g) \bigr]\ser{L}{ 6} =0. 
\end{align}But, by parts (a) and (b) of Lemma \ref{triglemma},  the quantity in square brackets, in (\ref{Orbit7KKL_2}), equals

\begin{align*}& {\pi^{-4}}\left[\begin{matrix}\pi^{-4}S(e-a,f-a,a,b,c,d,g-a,g-b,g-c,g-d)\\-\,  {S(e-a,g-a)}\bigl[{S(b,c,d) +S(e,f-a,g)}\bigr] B_4(\vec{x})   \end{matrix}\right] \nr=\,& \frac{S(e-a)}{\pi^{4}}\left[\begin{matrix}\pi^{-4}S( f-a,a,b,c,d,g-a,g-b,g-c,g-d)\\-S(b,c,d,g-a)B_4(\vec{x})   -S(e,f-a,g-a,g)B_4(\vec{x})   \end{matrix}\right] 
 \nr=\,& \frac{S(e-a)}{\pi^{4}}\left[\begin{matrix}\pi^{-4}S( f-a,a,b,c,d,g-a,g-b,g-c,g-d)\\-\pi^{-4} S(b,c,d,g-a,f-a,g-a)\bigl[{S(b,c,d) +S(e-a,f,g)}\bigr]  \\  -S(e,f-a,g-a,g)B_4(\vec{x})    \end{matrix}\right] 
 \nr=\,& -\frac{S(e-a)}{\pi^{4}}\left[\begin{matrix}\pi^{-4}S(b,c,d,f-a,g-a)\bigl[-S( a,g-b,g-c,g-d)\\+S(g-a,b,c,d) +S(e-a,f,g-a,g)\bigr]  \\  +S(e,f-a,g-a,g)B_4(\vec{x})    \end{matrix}\right] 
 \nr=\,& -\frac{S(e-a)}{\pi^{4}}\left[\begin{matrix}\pi^{-4}S(b,c,d,f-a,g-a)\bigl[  S(b-a,c-a,d-a,g)\\ -S(a,g-a,e+f-2a,g) +S(e-a,f,g-a,g)\bigr]  \\  +S(e,f-a,g-a,g)B_4(\vec{x})    \end{matrix}\right]
 \nr=\,& -\frac{S(e-a)}{\pi^{4}}\left[\begin{matrix}\pi^{-4}S(b,c,d,f-a,g-a)\\\times\bigl[  S(b-a,c-a,d-a,g) +S(e-2a,f-a,g-a,g)\bigr]  \\  +S(e,f-a,g-a,g)B_4(\vec{x})    \end{matrix}\right] 
 \nr=\,& -S(g){\pi^{-4}}S(e-a)\left[\begin{matrix} S(b,c,d)B_2(\vec{x})+S(e,f-a,g-a)B_4(\vec{x})    \end{matrix}\right] \nr=\,&-S(g)C(\vec{x}).\end{align*}
So dividing (\ref{Orbit7KKL_2}) through by $S(g)$ yields (\ref{Orbit7KKL}).
\end{proof} 

\begin{Proposition}  We have the Hamming type  $246$ $(K,K,K)$ relation\begin{align}\label{KKK246}
 &     A(1 - a, 1 - b, 1 - c, 1 - d) A(1 + a - e, 1 + b - e, 1 + c - e, 1 + d - e)\nr\times\,&B_2(\vec{x})\ser{K}{p_0} \nr+\,&S(e)  A(f - a, f - b, f - c, f - d)A( g - a, g - b, g - c, g - d) \nr\,\times\,&(A(a, 1 + a - e, 1 + a - f, 1 + a - g) )^2\ser{K}{n_{0}}     \nr-\,
& C(\vec{x})\ser{K}{p_4} 
=0. 
\end{align}\end{Proposition}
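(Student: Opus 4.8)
The plan is to derive (\ref{KKK246}) by eliminating an $L$ function between two of the $(L,K,K)$ relations already obtained, in the spirit of the other $(K,K,K)$-type proofs in this section.  First I would apply the central involution, i.e.\ the substitution $\vec{x}\mapsto(1-a,1-b,1-c,1-d,2-e,2-f,2-g)^{T}$, to the Orbit $6$ $(L,K,K)$ relation (\ref{Orbit6KKL}).  Under $w_0$ one has $K_{p_0}(\vec{x})\mapsto K_{n_0}(\vec{x})$, $K_{n_4}(\vec{x})\mapsto K_{p_4}(\vec{x})$ (as $p_k$ and $n_k$ are interchanged by $w_0$), and $L_{\overline{6}}(\vec{x})\mapsto L_6(\vec{x})$; each factor $A(\cdots)$ transforms in the evident way; and a short computation with $\sin\pi(1-x)=\sin\pi x$, $\sin\pi(2-x)=-\sin\pi x$, and $\sin\pi(1+x-y)=\sin\pi(y-x)$ shows that $B_4(\vec{x})$ is fixed by $w_0$ while $B_2(\vec{x})$ is negated by $w_0$.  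This converts (\ref{Orbit6KKL}) into a three-term relation among $K_{n_0}(\vec{x})$, $K_{p_4}(\vec{x})$, and $L_6(\vec{x})$, in which the coefficient of $K_{p_4}$ is a product of $A$'s times $B_4(\vec{x})$ and the coefficient of $L_6$ is $A(1-a,1-b,1-c,1-d)\,B_2(\vec{x})$.

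Second, I would combine this new relation with the Orbit $7$ $(L,K,K)$ relation (\ref{Orbit7KKL}) --- which also features $L_6$, together with $K_{p_0}$ and $K_{n_0}$ --- so as to eliminate $L_6$.  Up to the common factor $A(1-a,e-a,f-a,g-a)\,B_4(\vec{x})$, the result is (\ref{KKK246}).  Indeed, the coefficients of $K_{p_0}$ and of $K_{p_4}$ reduce at once to $A(1-a,1-b,1-c,1-d)A(1+a-e,1+b-e,1+c-e,1+d-e)B_2(\vec{x})$ and $-C(\vec{x})$, the latter because $C(\vec{x})$ is the coefficient of $L_6$ in (\ref{Orbit7KKL}).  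For the coefficient of $K_{n_0}$ one substitutes the identity $C(\vec{x})=\pi^{-4}S(e-a)\bigl[S(b,c,d)B_2(\vec{x})+S(e,f-a,g-a)B_4(\vec{x})\bigr]$ from Definition \ref{D510} and invokes $\Gamma(s)\Gamma(1-s)=\pi/\sin\pi s$; the two pieces proportional to $B_2(\vec{x})$ cancel, and what survives collapses to $S(e)A(f-a,f-b,f-c,f-d)A(g-a,g-b,g-c,g-d)\bigl(A(a,1+a-e,1+a-f,1+a-g)\bigr)^{2}$.  Dividing the whole relation by the common factor then gives (\ref{KKK246}).

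The step I expect to be the crux is this last cancellation of the $B_2(\vec{x})$ contributions in the coefficient of $K_{n_0}$: it is the one place where the explicit split of $C(\vec{x})$ into its $B_2$- and $B_4$-pieces is essential, and it is exactly what drives that coefficient down to a single summand --- as required by Theorem \ref{bigthm}(d), since $d(p_0,p_4)=2$ forces its width to be $2^{2/2-1}=1$.  The rest is routine manipulation with the reflection formula; as a final consistency check one verifies that all three coefficients have length $2(2+4+6)-6=18$, and that the coefficients of $K_{p_0}$ and of $K_{p_4}$ have widths $2^{d(n_0,p_4)/2-1}=2$ and $2^{d(p_0,n_0)/2-1}=4$ respectively, in accordance with parts (d) and (e) of the theorem.
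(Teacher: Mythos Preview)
Your argument is correct but takes a different route from the paper's.  Both proofs combine the Orbit~7 relation (\ref{Orbit7KKL}) with a second $(L,K,K)$ relation so as to eliminate $L_6$; the difference is which second relation is used.  The paper applies the change of variable $\vec{x}\to(a,1+a-e,1+a-f,1+a-g,1+a-b,1+a-c,1+a-d)^T$ to the \emph{Orbit~1} relation (\ref{Orbit1KKL}), producing a relation among $K_{p_0}$, $K_{p_4}$, and $L_6$; the nontrivial simplification then occurs in the coefficient of $K_{p_0}$.  You instead apply $w_0$ to the \emph{Orbit~6} relation (\ref{Orbit6KKL}), producing a relation among $K_{n_0}$, $K_{p_4}$, and $L_6$; your nontrivial simplification then occurs in the coefficient of $K_{n_0}$.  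In fact the crucial identity is the same in both cases---it is the splitting $C(\vec{x})=\pi^{-4}S(e-a)\bigl[S(b,c,d)B_2(\vec{x})+S(e,f-a,g-a)B_4(\vec{x})\bigr]$ of Definition~\ref{D510}, rearranged one way or the other---so the two arguments are of essentially equal difficulty.  The paper's choice has the mild advantage of starting from the simplest relation (Orbit~1 has only $A$-type coefficients), while yours has the conceptual appeal of using the central involution directly, which makes the pairing $p_0\leftrightarrow n_0$ transparent.
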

 \begin{proof}While this may be deduced from Proposition 7.7 in \cite{FGS}, a more direct, self-contained derivation may be given as follows.  Applying the change of variable$$\vec{x}\to (a,1+a-e,1+a-f,1+a-g,1+a-b,1+a-c,1+a-d)^T$$to our Orbit $1$ $(L,K,K)$ relation (\ref{Orbit1KKL}), we obtain a relation among $\ser{K}{p_0}$, $\ser{K}{p_4}$, and $\ser{L}{6}$.  Combining this is an appropriate fashion with our Orbit $7$ $(K,K,L)$  relation (\ref{Orbit7KKL}), we eliminate $\ser{L}{6}$ to get
\begin{align}
\label{KKK246_2}
&A({1+a-e,1+b-e,1+c-e,1+d-e})\bigl[S(a) C(\vec{x})  \nr-\,& S (e)    A(1 - a, e-a,f-a,g-a) \nr\times\,& A(a,1+a-e,1+a-f,1+a-g)
B_4(\vec{x}) \bigr]K_{p_0}(\vec{x}) \nr+\, &S(e-a,e)   A(f-a,f-b,f-c,f-d)  A(g-a,g-b,g-c,g-d)    \nr\times\, &A(a, b,c,d)  (A({a, 1 + a - e, 1 + a - f, 1 + a - g}))^2K_{n_0}(\vec{x}) \nr-\,&S(e-a) A({a,b,c,d })C(\vec{x})\ser{K}{p_4}
=0.
\end{align}But the quantity in square brackets, in (\ref{KKK246_2}), equals
\begin{align*}& S(a)\bigl[C(\vec{x}) -\pi^{-4}S(e,e-a,f-a,g-a)B_4(\vec{x})\bigr] \\=\,&  \pi^{-4}S(a,b,c,d,e-a)B_2(\vec{x}),
\end{align*}by the definition of $C(\vec{x})$.
So dividing (\ref{KKK246_2}) through by $S(e-a)A(a,b,c,d)$ yields (\ref{KKK246}).
\end{proof}

\subsection{Type $444$ relations}  

There are two relations of Type $444$: the Orbit $5$ $(L,K,K)$ relation and the Hamming type $444$ $(K,K,K)$ relation.   In any such  relation,  the coefficient of any $K$ or $L$ function has length $2(4+4+4)-6=18$.  (Note that, while the coefficients in a type $246$ relation have the same length as those in a type $444$ relation, the same cannot be said for the widths  of these coefficients.  In a relation of the type $246$, the three coefficients have widths $1,2$ and $4$ respectively, while, in a relation of type 444, all coefficients have width $2$.)

\begin{Proposition}  We have the  Orbit $5$ $(L,K,K)$  relation \begin{align}
 \label{Orbit5KKL}& A(1 - a, e-a, f-a, g-a)A(1 + a - f, 1 + b - f, 1 + c - f, 1 + d - f)  \nr\times\,& B_4(e - a, e - b, e - c, e - d, 1 + e - f, e, 1 + e - g) \ser{K}{p_0} \nr-\,  & A(g-a,g-b ,g-c,g-d )A(a, 1 +a - e, 1 + a - f, 1+a-g)  \nr\times\,  &  B_4(a,b,c,d,g,f,e)  \ser{K}{n_{4}}   \nr-\,&  A(a, b,c,d) A(e-a, e-b,e-c,e-d)   B_2({\vec{x}}) \ser{L}{\overline4} 
=0. 
\end{align}\end{Proposition}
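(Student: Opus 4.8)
The plan is to derive \eqref{Orbit5KKL} by the method used for the Type $224$, $244$ and $246$ $(L,K,K)$ relations earlier in this section: eliminating an auxiliary function between two relations already in hand. First I would record the bookkeeping that makes parts (c)--(e) of Theorem~\ref{bigthm} automatic here. The triple $\{p_0,n_4,\overline4\}$ is the representative $(\overline4,\{p_0,n_4\})$ of the seventh orbit of Proposition~\ref{P440}; its three cosets are pairwise at distance $4$, so the relation is of Type $444$, and Theorem~\ref{bigthm}(d),(e) then forces each coefficient to have width $2^{4/2-1}=2$ and length $2(4+4+4)-6=18$. One checks at once that the three displayed coefficients, each a product of two functions $A$ (length $4$, width $1$) with one factor $B_2$ or $B_4$ (length $10$, width $2$), have exactly these invariants; so only the existence of the relation \eqref{Orbit5KKL}, i.e. part (b), really needs an argument.

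Because $\{p_0,n_4,\overline4\}$ lies in its own $M$-orbit on $T^{(3)}$, distinct from the orbit of every Type $222$, $224$ or $246$ triple, no single change of variable carries an earlier relation to this one, and I would instead argue as in the proofs of \eqref{Orbit4KKL}, \eqref{Orbit6KKL} and \eqref{Orbit7KKL}. Fix $\rho\in M$ lying in the $G_K$-coset $n_4$ and satisfying $\{6,\overline6\}\rho=\{4,\overline4\}$; such a $\rho$ exists because $(n_4,\{4,\overline4\})$ belongs to the same $M$-orbit of $S(K,L^2)$ as the Orbit~$3$ representative $(p_0,\{6,\overline6\})$, and it can be written explicitly in the Coxeter generators using Propositions~\ref{P410} and~\ref{P420}. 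Applying $\rho$ to the Orbit~$3$ $(K,L,L)$ relation \eqref{Orbit3KLL} (which relates $K_{p_0}(\vec{x})$, $L_{\overline6}(\vec{x})$ and $L_6(\vec{x})$) produces a three-term relation among $K_{n_4}(\vec{x})$, $L_{\overline4}(\vec{x})$ and $L_4(\vec{x})$, the width-$1$ coefficient of $L_{\overline6}$ passing to the coefficient of $L_{\overline4}$ and the width-$2$ coefficient of $L_6$ (a multiple of $B_4$) passing to the coefficient of $L_4$. I would then form the linear combination of this relation with the Orbit~$4$ $(L,K,K)$ relation \eqref{Orbit4KKL} (which relates $K_{p_0}(\vec{x})$, $K_{n_4}(\vec{x})$ and $L_4(\vec{x})$) that cancels $L_4(\vec{x})$. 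The result is a relation among $K_{p_0}(\vec{x})$, $K_{n_4}(\vec{x})$ and $L_{\overline4}(\vec{x})$ in which the coefficients of $K_{p_0}$ and $L_{\overline4}$ are simple products of $S$, $A$ and a single $B_2$ or $B_4$ factor (inherited, up to the scalar used in the elimination), while the coefficient of $K_{n_4}$ first appears as an opaque alternating sum of products of the functions $S$ and $A$.

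The step I expect to be the main obstacle is the simplification of this $K_{n_4}$-coefficient down to (a constant multiple of) $A(g-a,g-b,g-c,g-d)\,A(a,1+a-e,1+a-f,1+a-g)\,B_4(a,b,c,d,g,f,e)$. After clearing gamma functions via $\Gamma(s)\Gamma(1-s)=\pi/\sin\pi s$ one is left with an alternating sum of products of sines, and the plan is to collapse it, using parts (a) and (b) of Lemma~\ref{triglemma} exactly as in the proofs of \eqref{Orbit2KLL}, \eqref{Orbit3KLL}, \eqref{Orbit6KKL} and \eqref{Orbit7KKL}, into $S(\cdot)$ times a single $B_4$-type bracket of the form $[\,S(b,c,d)+S(\cdot)\,]$ --- precisely the bracket inside the factor $B_4(a,b,c,d,g,f,e)$ of \eqref{Orbit5KKL}. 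Dividing the whole relation through by the common sine factor that emerges brings every coefficient to length $18$ and width $2$ and yields \eqref{Orbit5KKL}; this same division, together with the explicit form of $\rho$, is also what checks that the already-simple coefficients of $K_{p_0}$ and $L_{\overline4}$ take exactly the stated forms. The difficulty is purely in the execution: spotting the one correct application of Lemma~\ref{triglemma}(b) inside an otherwise intractable sum, pushing the $\rho$-substitution through all the many gamma and sine arguments, and tracking the signs that $\sin\pi(1+x)=-\sin\pi x$ introduces when one rewrites arguments such as $1+a-f$, $1+b-g$ and $1+e-g$. An equivalent route, with the same structure and the same obstacle, is to eliminate $L_{\overline6}(\vec{x})$ between the Orbit~$6$ $(L,K,K)$ relation \eqref{Orbit6KKL} and a suitable change-of-variable image of the Orbit~$4$ $(K,L,L)$ relation \eqref{Orbit4KLL}.
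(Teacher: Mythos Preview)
Your strategy is sound and will produce \eqref{Orbit5KKL}, but it is a genuinely different route from the one the paper takes, and it is somewhat less economical.

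The paper does not use the Orbit~$3$ $(K,L,L)$ relation at all. Instead it combines two Type~$224$ relations: the $f\!\leftrightarrow\!g$ image of the Orbit~$4$ $(L,K,K)$ relation \eqref{Orbit4KKL} (among $K_{p_0}$, $K_{n_4}$, $L_5$) and the $e\!\leftrightarrow\!g$ image of the Orbit~$2$ $(K,L,L)$ relation \eqref{Orbit2KLL} (among $K_{p_0}$, $L_{\overline4}$, $L_5$), and eliminates $L_5$. Thus the auxiliary function is $L_5$, not $L_4$, and the coefficient requiring trigonometric collapse is that of $K_{p_0}$, not $K_{n_4}$. Because both inputs have length-$10$ coefficients, the raw output has length~$20$, so a single common sine $S(f-a)$ suffices to bring every coefficient to length~$18$; the $K_{p_0}$ bracket is then reduced to $B_4(e-a,e-b,e-c,e-d,1+e-f,e,1+e-g)$ by a short chain of applications of Lemma~\ref{triglemma}(a),(b). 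Your route pairs a length-$10$ relation with a length-$14$ one, so the raw coefficients have length~$24$: you would need to extract a common factor of length~$6$, and your $K_{n_4}$ bracket initially contains a $B_3\cdot B_2$ term (width~$4$), making the simplification via Lemma~\ref{triglemma} longer than you anticipate. A small slip: $(\overline4,\{p_0,n_4\})$ is the representative of the \emph{fifth} orbit in Proposition~\ref{P440}, not the seventh. Also, whether your $\rho$ can be chosen to send $6\mapsto 4$ (rather than $6\mapsto\overline4$) is determined, not free, since no element of the stabilizer $G_K$ of $p_0$ interchanges $6$ and $\overline6$; either choice still yields the triple $\{K_{p_0},K_{n_4},L_{\overline4}\}$ after elimination, but the bookkeeping of which coefficients are ``already nice'' may flip.
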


\begin{proof} Transposing $f$ and $g$ in our Orbit $4$ $ (L,K,K)$ relation (\ref{Orbit4KKL}) yields a relation among $\ser{K}{p_0}$, $\ser{K}{n_4}$, and $\ser{L}{5}$, while transposing $e$ and $g$ in our Orbit $2$ $(K,L,L)$ relation (\ref{Orbit2KLL}) yields a relation among $\ser{K}{p_0}$, $\ser{L}{\overline4}$, and $\ser{L}{5}$. We form a linear combination of the two new relations thus formed  to eliminate $\ser{L}{5}$, to get
 \begin{align}
\label{Orbit5KKL_2}  &A(1 - a, e - a, f - a, 
  g - a)A({1 + a - f, 1 + b - f, 1 + c - f, 1 + d - f }) \nr\times\,&\bigl[S(g-a) B_4(a,b,c,d,g,f,e)-  S(e)   B_2(\vec{x}) \bigr]\ser{K}{p_0}\nr-\,&   S(f-a)A({g-a,g-b,g-c,g-d })   \nr\times\,& A(a, 1 + a - e, 1 + a - f, 1 + a - g) B_4(a,b,c,d,g,f,e)\ser{K}{n_4} \nr-\,&S(f-a)  \Bb{a}{b}{c}{d} A({e - a, e - b, e - e, g - d})B_2(\vec{x}) \ser{L}{\overline4}  
=0. 
\end{align}But the quantity in square brackets, in (\ref{Orbit5KKL_2}), equals \begin{align*}&{\pi^{-4}}{S(f-a,g-a)}\\\times\,&\biggl[\genfrac{} {}{0pt}{}{S(e-a )\bigl[{S(b,c,d) +S(e,f,g-a)}\bigr] }{- S(e)   \bigl[{S(b-a,c-a,d-a) +S(e-2a,f-a,g-a)}\bigr]}\biggr]\\=\,&{\pi^{-4}}{S(f-a,g-a)} \\\times\,&\biggl[\genfrac{} {}{0pt}{}{S(e,g-a )\bigl[{S(e-a,f)  +S(e-2a,f-a)}\bigr]}{ + S(e-a,b,c,d) -S(e,b-a,c-a,d-a)}\biggr]
\\=\,&{\pi^{-4}}{S(f-a,g-a)} \\\times\,&\biggl[\genfrac{} {}{0pt}{}{S(e,g-a,a,e+f-2a ) }{ + S(a)   \bigl[{S(e-b,e-c,e-d)+S(e,e-a,2a-f-g)}\bigr]}\biggr]
\\=\,&{\pi^{-4}}{S(a,f-a,g-a)} \\\times\,&\biggl[\genfrac{} {}{0pt}{}{S(e  )\bigl[S(g-a,e+f-2a)+S(e-a,2a-f-g)\bigr] }{ + S( e-b,e-c,e-d) }\biggr]\\=\,&{\pi^{-4}}{S(a,f-a,g-a)} \bigl[{S(e-b,e-c,e-d)  +S(e,a-f, e-g)   }\bigr]\\=\,&S(f-a)B_4(e - a, e - b, e - c, e - d, 1 + e - f, e, 1 + e - g).\end{align*}So dividing (\ref{Orbit5KKL_2}) through by $S(f-a)$ yields (\ref{Orbit5KKL})
\end{proof}
\begin{Proposition}\label{lastprop}  We have the Hamming type $444$ $(K,K,K)$ relation\begin{align}\label{KKK444} &   \Bb{1-a}{e-a}{f-a}{g-a}\Bb
{b}{1+b-e}{1+b-f}{1+b-g} \nr\times\,& B_5({e - a, b, 1 + b - f, g - a,  1 + b - a, 1 + b + c - f,1+b+d-f}) 
K_{p_0}(\vec{x}) 
 \nr+\,& \Bb{a}{1+a-e}{f-d}{g-d}\Bb
{b}{1+b-e}{f-c}{g-c}\nr\times\,&  B_5({1 + b - e, b, f - c, g - c,  1 + a + b - e, 1 + b - c,1+b+d-e}) 
 K_{n_4}(\vec{x}) 
\nr+\,&   \ds  \Bb{a}{b}{c}{d}\Bb
{e-a}{e-b}{e-c}{e-d}  B_5(\vec{x}) 
K_{p_5}(\vec{x}) =0. \end{align} \end{Proposition}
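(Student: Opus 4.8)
The plan is to derive the Hamming type $444$ $(K,K,K)$ relation \eqref{KKK444} from relations already established, by the same elimination strategy used throughout this section.  The three $K$ functions appearing are $K_{p_0}$, $K_{n_4}$, and $K_{p_5}$; since the pair $\{p_0,p_5\}$ has Hamming distance $4$ (the indices differ in the $q$-part but, upon checking, share neither $q$ nor $r$ with the right multiplicities) and similarly for the other pairs, we are indeed in the type $444$ case, so the widths of all three coefficients should be $2$, consistent with a single factor $B_5$ in each.  The natural approach is to take our Orbit $4$ $(L,K,K)$ relation \eqref{Orbit4KKL} — which already involves $K_{p_0}$, $K_{n_4}$, and $L_4$ — together with the Hamming type $224$ $(K,K,K)$ relation \eqref{KKK224}, which involves $K_{p_0}$, $K_{n_4}$, and $K_{p_1}$, and combine these with suitable change-of-variable images so as to produce a third relation in which an $L$ function (or the extra $K_{p_1}$) can be eliminated, leaving only $K_{p_0}$, $K_{n_4}$, and $K_{p_5}$.

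Concretely, first I would apply to an already-derived relation a change of variable $\vec{x}\mapsto\rho\vec{x}$ chosen so that one of its three $K$ functions becomes $K_{p_5}$ while the other two remain (or become) $K_{p_0}$ and $K_{n_4}$ up to parameter substitution; the most promising candidate is the Hamming type $224$ relation \eqref{KKK224}, since $p_5$ lies at Hamming distance $2$ from a suitable neighbor of $p_0$.  Alternatively, one combines \eqref{Orbit4KKL} (involving $L_4$) with its image under a transposition, say of $c$ and $d$ or of parameters fixing $p_0$ and $n_4$, to eliminate $L_4$ and introduce $K_{p_5}$.  Either way, after forming the linear combination and clearing the common gamma-function factors via $\gg{s}\gg{1-s}=\pi/\sin\pi s$, the coefficient of each surviving $K$ function will be a product of $A$-functions times a messy trigonometric sum.

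The main obstacle — as in every proposition of this section — will be to recognize that messy trigonometric sum as $\pi^4$ (up to sign and explicit $A$-factors) times an appropriate argument-shifted instance of $B_5$.  This is where Lemma \ref{triglemma}, particularly part (b), does the heavy lifting: one expects the bracketed quantity arising in the elimination to take the shape (sine product) $\times\,[S(\,\cdot-\cdot,\,\cdot-\cdot,\,\cdot-\cdot)-S(\,\cdot-\cdot,\,\cdot-\cdot,\,\cdot-\cdot)]$ or a four-term analog, which Lemma \ref{triglemma}(b) collapses to $S(\text{difference})\bigl[S(b'-a',c'-a',d'-a')+S(e'-2a',f'-a',g'-a')\bigr]$ in the primed (shifted) variables, and this last bracket is exactly the second factor in the definition of $B_5$.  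Matching the first factor $S(b',e'-b')$ of $B_5$ to the remaining sine product, and bookkeeping the precise seven-tuple of arguments fed into $B_5$ in each of the three coefficients, is the delicate part; one reads off these arguments — $(e-a,b,1+b-f,g-a,1+b-a,1+b+c-f,1+b+d-f)$ for the $K_{p_0}$ coefficient, and the two analogous tuples for $K_{n_4}$ and $K_{p_5}$ — directly from the change of variable $\rho$ used in the first step.  Once the identification is made, dividing through by the appropriate common factor (here, $S$ of some parameter, as in the neighboring proofs) yields \eqref{KKK444}.  Finally, one checks directly that the resulting coefficients have width $2$ and length $2(4+4+4)-6=18$, as required by parts (d) and (e) of Theorem \ref{bigthm}.
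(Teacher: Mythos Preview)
Your proposal is a plan rather than a proof, and it takes a fundamentally different route from the paper.  The paper does \emph{not} derive \eqref{KKK444} by elimination from earlier relations in this section; instead it simply quotes Proposition~7.6 of \cite{FGS}, which already contains a Hamming type~$444$ relation among $K_{p_0}$, $K_{n_4}$, and $K_{p_5}$, renormalizes by $S(b)/\Gamma(b)$ to match the present convention for $K(\vec{x})$, and then applies Lemma~\ref{triglemma}(b) three times to rewrite each of the bracketed trigonometric quotients (each of the form $S(\cdot)/S(\cdot)$ times a difference of two four-fold sine products) as a single $B_5$ evaluated at the indicated shifted arguments.  The proof is therefore a two-line reduction to an external reference.

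Your elimination strategy is plausible in spirit---indeed, the group-theoretic machinery of Section~4 guarantees that such a derivation exists---but as written it has a genuine gap: you never commit to a specific change of variable $\rho$, and the two candidate routes you float (transforming \eqref{KKK224} versus combining images of \eqref{Orbit4KKL}) are left unexecuted.  In particular, eliminating $L_4$ from two images of \eqref{Orbit4KKL} cannot by itself introduce $K_{p_5}$, since \eqref{Orbit4KKL} involves only the cosets $p_0$, $n_4$, and $4$; you would need at least one further relation touching $p_5$.  If you want a self-contained derivation along your lines, a cleaner path is to take two images of the Hamming type~$224$ relation \eqref{KKK224} under changes of variable that share two of the three cosets, eliminate the common third coset, and then check via Lemma~\ref{triglemma}(b) that the resulting coefficients collapse to $B_5$'s.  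That would buy independence from \cite{FGS}, at the cost of a longer computation than the paper's citation.
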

\begin{proof}Multiplying the result of Proposition 7.6 in \cite{FGS} through by $S(b)/\gg{b}$ (and recalling that the function $K(\vec{x})$ there is $\pi/\gg{1-a}$ times the one in use here), we get (in terms of the present definition of $K(\vec{x})$)
\begin{align}\label{KKK444_2} & \Bb{1-a}{e-a}   {f-a}{g-a}  \Bb{b}{1+b-e}{1+b-f}{1+b-g} \nr\times\,&\frac{S( a,b)}{S(f)} \ds \biggl[\genfrac{} {}{0pt}{}{\sin \pi (c+d-g) \sin \pi e \,\sin  \pi (f-c) \sin \pi (f-d) }{-\sin  \pi (e-a-b) \sin \pi (f-e) \sin \pi c \,\sin\pi d}\biggr]    \nr\times\,&K_{p_0}(\vec{x})
 \nr+\,&\Bb{a}{1+a-e}{f-d}{g-d}\Bb{b}{1+b-e}  {f-c} {g-c}  
\nr\times\,&
   \frac{S(b,e-a)}{S(f-b-c)}\ds\biggl[\genfrac{} {}{0pt}{} {
\sin\pi (a-b)\sin\pi (g-a-d) \sin\pi c\,\sin\pi(e-d)}{
- \sin\pi (a+c-f)\sin\pi e \,\sin\pi(f-b)\sin\pi(g-a) } \biggr] \nr\times\,&  K_{n_4}(\vec{x})\nr+\,&  \Bb{a} {b}{c}{d}  \Bb{e-a} {e-b}{e-c}{e-d}
\nr\times\,& \frac{S(b,e-b)}{S (c-b)}\ds\biggl[\genfrac{} {}{0pt}{} {  \sin\pi (a-c)\sin\pi (e-a-b)\sin\pi(f-b)\sin\pi(g-b)}{ -
\sin\pi (a-b)\sin\pi (e-a-c) \sin\pi(f-c)\sin\pi(g-c) } \biggr]  \nr\times\,&  
K_{p_5}(\vec{x}) =0.\end{align}Applying Lemma \ref{triglemma}(b) towards simplification of the terms in square brackets, in  (\ref{KKK444_2}), yields (\ref{KKK444}). \end{proof}

\end{document}